\newcommand{\keywords}[1]{\textbf{Key words and phrases:} #1}
\newcommand{\msc}[1]{\textbf{MSC2010:} #1}
\DeclareMathOperator{\Capacity}{Cap}
\def\rnum#1{\expandafter{\romannumeral #1}} 
\def\Rnum#1{\uppercase\expandafter{\romannumeral #1}}
\title{Rate functions for random walks on Random conductance models and related topics}
\author{Chikara Nakamura}
\date{June 29th, 2016}
\begin{document}
\newtheorem{Definition}{Definition}[section]
\newtheorem{Proposition}[Definition]{Proposition}
\newtheorem{Theorem}[Definition]{Theorem}
\newtheorem{Assumption}[Definition]{Assumption}
\newtheorem{Lemma}[Definition]{Lemma}
\newtheorem{Remark}[Definition]{Remark}
\newtheorem{Example}[Definition]{Example}
\newtheorem{Corollary}[Definition]{Corollary}
\newtheorem{Application}[Definition]{Application}
\newtheorem{TenAss}[Definition]{Tentative Assumption}

\newtheorem*{Notation}{Notation}
\newtheorem*{Acknowledgment}{Acknowledgment}

\makeatletter
 \renewcommand{\theequation}{%
   \thesection.\arabic{equation}}
  \@addtoreset{equation}{section}
\makeatother

\maketitle

\begin{abstract}
 We consider laws of the iterated logarithm and the rate function for sample paths of random walks on random conductance models under the assumption that 
   the random walks enjoy long time sub-Gaussian heat kernel estimates.   
\end{abstract}

\begin{flushleft}
 \keywords{Rate function, Law of the iterated logarithm, Random conductance, Heat kernel.}  \\
 \msc{60J10, 60J35, 60F20.}  \\
 \end{flushleft}


\section{Introduction}    

The random conductance model (RCM) is a pair of a graph and a family of non-negative random variables (random conductances) 
    which are indexed by edges of the graph.  
The RCM includes various important examples such as the supercritical percolation cluster,
     whose random conductances are i.i.d. Bernoulli random variables. 
In the recent progress on the RCM, various asymptotic behaviors of random walks are obtained on a class of RCM 
     such as invariance principle, functional CLT, local CLT and  long time heat kernel estimates.
Here is a partial list of examples of the RCM;  
    \begin{enumerate} 
          \item  Uniform elliptic case \cite{Delmotte}, 
          \item  The supercritical percolation cluster \cite{Barlow1},  
          \item  I.i.d. unbounded conductance bounded from below \cite{BD},
          \item  I.i.d. bounded conductance under some tail conditions near $0$ \cite{BKM}, 
          \item  The level sets of Gaussian free field and the random interlacements \cite{Sapozhnikov}. 
    \end{enumerate}
We refer \cite{SS}, \cite{BB}, \cite{MP} for the invariance principle for random walks on the supercritical percolation cluster, 
 \cite{BH} for the local limit theorem for random walks on the supercritical percolation cluster, 
 \cite{ABDH} for the invariance principle on general i.i.d. RCMs, 
 \cite{ADS} for the Gaussian heat kernel upper bound on the possibly degenerate RCMs.   
We also refer \cite{Biskup} and \cite{Kumagai} for more details about the RCM.

In \cite{KN}, we discussed the laws of the iterated logarithms (LILs) for discrete time random walks 
     on a class of  RCM under the assumption of long time heat kernel estimates.  
The aims of this paper are to establish the laws of the iterated logarithm and to describe the rate functions 
       for the sample paths of continuous time random walks on the RCM.

The LILs describe the fluctuation of stochastic processes, which was originally obtained by Khinchin \cite{Khinchin} for a random walk. 
  We establish the LIL w.r.t. both $\displaystyle \sup_{0 \le s \le t} d(Y_0^{\omega}, Y_s^{\omega})$ and $d(Y_0^{\omega},  Y_t^{\omega})$, 
        and another LIL, which describes liminf behavior of $\displaystyle \sup_{0 \le s \le t} d(Y_0^{\omega}, Y_s^{\omega})$. 

The rate function describes the sample path ranges of stochastic processes.  
  For $d$-dimensional Brownian motion $B= \{ B_t \}_{t \ge 0}$, the Kolmogorov test tells us that 
         \begin{align*}   
                  \mathbb{P} \left( |B_t| \ge t^{1/2} h(t) \text{ for sufficiently large $t$ } \right) =
                            \begin{cases}
                                        1   \\  
                                        0,    
                            \end{cases} 
                  \quad \text{ according as }  \quad  \int_{1}^{\infty}  \frac{1}{t} h(t)^{d} e^{-\frac{h(t)^2}{2}} dt 
                            \begin{cases}
                                    < \infty \\  
                                    = \infty, 
                            \end{cases} 
           \end{align*} 
      where $h(t)$ is a positive function such that $h(t) \nearrow \infty$ as $t \to \infty$. 
  For $d\ge 3$, the Dvoretzky and Erd\H{o}s test tells us that 
         \begin{align}  \label{IntroRF2}
                  \mathbb{P} \left( |B_t| \ge t^{1/2} h(t) \text{ for sufficiently large $t$ } \right) =
                            \begin{cases}
                                        1   \\  
                                        0,    
                            \end{cases}
                  \quad \text{ according as }  \quad  \int_{1}^{\infty}  \frac{1}{t} h(t)^{d-2} dt 
                            \begin{cases}
                                    < \infty \\  
                                    = \infty, 
                            \end{cases} 
           \end{align} 
   where $h(t)$ is a positive function such that $h(t) \searrow 0$ as $t \to \infty$.  
 These results were extended to various frameworks such as symmetric stable processes on $\mathbb{R}^d$, 
    Brownian motions on Riemannian manifolds, symmetric Markov chains on weighted graphs and  $\beta$ stable like processes ($\beta \ge 2$).  
  We establish an analogue of \eqref{IntroRF2} w.r.t. random walks on the RCM.

Our approach is as follows; We assume quenched heat kernel estimates and establish both quenched LILs and an analogue of the Dvoretzky and Erd\H{o}s test. 
As we will see in Section \ref{Subsec:Ex}, our results are applicable for various models since heat kernel estimates are obtained for random walks on various RCMs.   
The concrete examples are given in Section \ref{Subsec:Ex}.  

The organization of this paper is as follows.  
  First, we give the framework and main results of this paper in Section  \ref{Subsec:FW} and examples in Section \ref{Subsec:Ex}. 
   In Section \ref{Sec:RFHK} we establish some preliminary results. 
   In Section \ref{Sec:LIL} we give the proof of the LILs.   
   In Section \ref{Sec:LRF} we establish an analogue of \eqref{IntroRF2}.  
   Finally in Section \ref{Sec:Erg} we discuss the case where $G=\mathbb{Z}^d$ and the media is ergodic.

In this paper, we use the following notation.  
\begin{Notation}
   \begin{enumerate}  \renewcommand{\labelenumi}{(\arabic{enumi})}     
          \item      We use $c, C, c_1 , c_2 , \cdots$ as the deterministic  positive constants. 
                      These constants do not depend on the random environment $\omega$, time parameters $t,s \cdots$,  distance parameters $r,\cdots$, and vertices of graphs. 

          \item    We define $a \vee b := \max \{ a,b \}$ and $a \wedge b := \min \{ a,b \}$.

   \end{enumerate}
\end{Notation}

\subsection{Framework and Main results}   \label{Subsec:FW}
  Let $G= (V,E) = (V(G), E(G))$ be a  countable and connected graph of bounded degree, i.e. 
      $\displaystyle M := \sup_{x \in V(G)} \deg x < \infty$.  
 We write $x \sim y $ if $(x,y) \in E(G)$.   
 A sequence $\ell_{xy} : x=x_0, x_1, \cdots , x_n = y$ on $G$ is called a path from $x$ to $y$ if $x_i \sim x_{i+1}$ for all $i=0,1, \cdots, n-1$. 
 We write $d(\cdot, \cdot)$ as the usual graph distance, that is, the length of a shortest path in $G$, and denote $B(x,r) = \{ y \in V(G) \mid d(x,y) \le r \}$. 

 Throughout of this paper we assume that there exist $\alpha \ge 1$, $c_1, c_2>0$ such that 
            \begin{gather}  \label{NumVer}
                        c_{1} r^{\alpha} \le \sharp B(x,r) \le c_{2} r^{\alpha} 
             \end{gather}
    for any $x \in V(G)$ and $r \ge 1$.

We introduce the random conductance model below. 
Let $\omega = \{ \omega_e = \omega_{xy} \}_{e= (x,y) \in E(G) }$ be a family of non-negative weight which is defined on a probability space  $(\Omega, \mathcal{F}, \mathbb{P})$. 
For  non-negative weights $\omega = \{ \omega_e \}_{e}$, we define $\displaystyle \pi^{\omega} (x) = \sum_{y;y \sim x} \omega_{xy}$ and $\displaystyle \nu^{\omega} (x) = 1$. 
We fix a base point $x_0 \in V(G)$, 
   and define graphs $G^{\omega} = (V(G^{\omega}), E(G^{\omega}))$ as 
            \begin{align*}
                &V(G^{\omega} ) = \left\{ y \in V(G) \middle| 
                             \begin{array}{l}
                                   \text{ There exists a path $\ell_{x_0y}:x_0, x_1, \cdots , x_n = y$ such that} \\ 
                                   \text{  $\omega_{x_i x_{i+1}} >0$ for all $i=0,1, \cdots, n-1$.} 
                            \end{array} \right\},   \\
                &E(G^{\omega}) = \{ e=(x,y) \in E(G) \mid x,y \in V(G^{\omega}) \text{ and } \omega_{xy} > 0 \}.
            \end{align*} 
 We denote $d^{\omega} (\cdot, \cdot)$ as the graph distance of $G^{\omega}$. 
 Note that $G^{\omega} = G$ and $d^{\omega} = d$ if conductance $\omega$ is strictly positive.

 We will consider two types of random walks, constant speed random walk (CSRW) and variable speed random walk (VSRW) associated to $\omega \in \Omega$.  
  Both CSRW and VSRW are continuous time random walk whose transition probability is given by $\displaystyle P^{\omega}(x,y) = \frac{ \omega_{xy} }{ \pi^{\omega} (x)}$. 
  For the CSRW, the holding time distribution at $x \in V(G^{\omega})$ is Exp $(1)$, whereas for the VSRW, the holding time distribution at $x \in V(G^{\omega})$ is Exp $(\pi^{\omega} (x))$.   
  We write $\mathcal{L}_{\theta}^{\omega}$ for the generator which is given by 
             \begin{align*}
                         \mathcal{L}_{\theta}^{\omega} f (x) = \frac{1}{\theta^{\omega} (x) }  \sum_{y;y \sim x} (f(y) - f(x)) \omega_{xy},
             \end{align*}
  and we also write the corresponding  heat kernel as  
             \begin{align*}
                       q_t^{\omega} (x,y) =  \frac{P^{\omega} (x,y)}{\theta^{\omega} (y) }, 
             \end{align*}
   where $\theta^{\omega} = \pi^{\omega}$ for the CSRW case and $\theta^{\omega} \equiv 1$ for the VSRW case.  
  We write $Y^{\omega} = \{Y_t^{\omega} \}_{t \ge 0}$ as either the CSRW or the VSRW, $P_x^{\omega}$ as the law of the random walk $Y^{\omega}$ which starts at $x$, and   
                    \begin{align}  \label{Hit}
                                \tau_F = \tau_F^{\omega} = \inf \{ t \ge 0 \mid Y_t^{\omega} \not\in F \},  
                                 \quad  \sigma_F = \sigma_F^{\omega} = \inf \{ t \ge 0 \mid Y_t^{\omega} \in F \},  
                                 \quad  \sigma_F^+ = \sigma_F^{+\omega} =  \inf \{ t > 0    \mid Y_t^{\omega} \in F \}.
                     \end{align} 
 We denote $F^{\omega} = F \cap V(G^{\omega})$,  $V^{\omega} (F) = \sum_{y \in F \cap V^{\omega} (G)} \theta^{\omega} (y)$ for $F \subset V(G)$ 
     and $V^{\omega} (x,r) = V^{\omega} (B(x,r))$.   
We write  $T_0^{\omega} = 0$ and $T_{n+1}^{\omega} = \inf \{ t>T_n^{\omega} \mid Y_t^{\omega} \neq Y_{T_n^{\omega}}^{\omega} \}$, and introduce 
  a discrete time random walk $\{ X_n^{\omega} := Y_{T_n^{\omega}}^{\omega} \}_{n \ge 0}$.    

First, we state the results about LILs. To do this, we need the following assumptions. 
\begin{Assumption}   \label{Ass10} 
There exist positive constants $\epsilon , \beta$ such that $\epsilon < \beta +1$ 
    and a family of non-negative random variables $\{ N_x = N_{x,\epsilon } \}_{x \in V(G)}$ such that the following hold; 
     \begin{enumerate}   \renewcommand{\labelenumi}{(\arabic{enumi})} 
          \item There exist positive constants $c_{1.1}, c_{1.2} , c_{1.3} , c_{1.4}$ such that 
                        \begin{align}  \label{UHK} 
                                 q_t^{\omega} (x,y)  &\le  
                                        \begin{cases}
                                                \frac{c_{1.1} }{ t^{\alpha/\beta} }  
                                                            \exp \left( - c_{1.2} \left( \frac{ d (x,y)^{\beta} }{t} \right)^{1/ (\beta - 1)}  \right),     
                                                                     &    \text{if $t \ge d(x,y)$,}    \\
                                                c_{1.3} \exp \left( - c_{1.4} d(x,y) \left( 1 \vee \log \frac{d(x,y)}{t} \right)  \right),   
                                                                    &    \text{if $t \le d(x,y)$,}
                                     \end{cases}      
                          \end{align}
                   for almost all $\omega \in \Omega$, all $x,y \in V(G^{\omega}) $ and $t\ge N_{x} (\omega )$. 

          \item  There exist positive constants $c_{2.1}, c_{2.2}$ such that
                     \begin{align}  \label{LHK}  
                          q_t^{\omega} (x,y) 
                                 &\ge  \frac{c_{2.1}}{t^{\alpha/\beta}} \exp \left( - c_{2.2} \left( \frac{ d(x,y)^{\beta} }{t} \right)^{1 / (\beta - 1)} \right)   
                     \end{align}
                    for almost all $\omega \in \Omega$, all $x,y \in V(G^{\omega})$ and 
                    $t \ge 0$ with $d(x,y)^{1+\epsilon } \vee N_{x} (\omega) \le t$.

          \item There exist positive constants $c_{3.1} , c_{3.2}$ such that 
                    \begin{align} \label{Vol}
                                c_{3.1} r^{\alpha}  \le V^{\omega} (x,r) \le  c_{3.2}  r^{\alpha}   
                    \end{align}
                    for almost all $\omega \in \Omega$, all $x \in V(G^{\omega}) $ and $r \ge N_x (\omega )$.    

          \item There exist positive  constants $c_{4.1}, c_{4.2}, c_{4.3}, c_{4.4}, c_{4.5}$ such that
                    \begin{align}  \label{CV}
                              & q_t^{\omega} (x,y) \le  
                                   \begin{cases}  
                                         \frac{c_{4.1}}{ \sqrt{\theta^{\omega} (x)  \theta^{\omega} (y)}}  \exp \left( -c_{4.2}  \frac{d(x,y)^2}{t} \right),
                                                     &  \text{ if $t \ge c_{4.3}d(x,y)$,}    \\  
                                          \frac{c_{4.4}}{ \sqrt{\theta^{\omega} (x)  \theta^{\omega} (y)}}  \exp \left( -c_{4.5} d(x,y) \left( 1 \vee \log \frac{d(x,y)}{t} \right) \right), 
                                                     &   \text{ if $t \le c_{4.3} d(x,y)$,}  
                                   \end{cases}
                    \end{align} 
                   for almost all $\omega \in \Omega$,  all $t >0$ and  $x,y \in V(G^{\omega})$ with $d(x,y) \ge N_x (\omega) \wedge N_y (\omega)$.

     \end{enumerate}
\end{Assumption}

\eqref{CV} is called Carne-Varopoulos bound. 
Note that \eqref{UHK} holds for $t \ge N_x (\omega)$ while \eqref{CV} holds for all $t >0$. 
It is known that \eqref{CV} holds under general conditions which will be described in the following Proposition (see \cite[Theorem 2.1, 2.2]{Folz}).  

\begin{Proposition}  \label{Prop:FPP}
Let $\{ N_x \}$ be as in Assumption \ref{Ass10} and $d_{\theta}^{\omega} (\cdot, \cdot)$ be a metric on $G^{\omega} = (V(G^{\omega}), E(G^{\omega}))$ which satisfies 
              \begin{align} \label{FPPmetric}
                      \frac{1}{ \theta^{\omega} (x)} \sum_{y \in V(G^{\omega})} d_{\theta}^{\omega} (x,y)^2 \omega_{xy} \le 1. 
              \end{align}  
 If there exists a positive constant $c$ such that $d_{\theta}^{\omega} (x,y) \ge c d(x,y)$ for all $x,y \in V(G^{\omega})$ with $d(x,y) \ge N_x(\omega) \wedge N_y(\omega)$, 
 then \eqref{CV} holds. 
\end{Proposition}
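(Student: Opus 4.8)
The plan is to obtain \eqref{CV} from Folz's criterion \cite{Folz} in two steps: first prove a sub-Gaussian heat-kernel upper bound \emph{with respect to the adapted metric} $d_{\theta}^{\omega}$ rather than the graph metric $d$, and then insert the comparison $d_{\theta}^{\omega}(x,y)\ge c\,d(x,y)$. Concretely, I would first show that for all $t>0$ and all $x,y\in V(G^{\omega})$,
\begin{equation}\label{eq:CVmetric}
q_{t}^{\omega}(x,y)\ \le\ \frac{C}{\sqrt{\theta^{\omega}(x)\,\theta^{\omega}(y)}}\;\exp\!\left(-c\Bigl(\tfrac{d_{\theta}^{\omega}(x,y)^{2}}{t}\ \wedge\ d_{\theta}^{\omega}(x,y)\bigl(1\vee\log\tfrac{d_{\theta}^{\omega}(x,y)}{t}\bigr)\Bigr)\right).
\end{equation}
Granting \eqref{eq:CVmetric}, passing to \eqref{CV} is essentially bookkeeping: $s\mapsto \tfrac{s^{2}}{t}\wedge s\bigl(1\vee\log\tfrac{s}{t}\bigr)$ is nondecreasing, so replacing $d_{\theta}^{\omega}(x,y)$ by the smaller quantity $c\,d(x,y)$ in the exponent only weakens the estimate, and its two regimes — Gaussian when $t\gtrsim d_{\theta}^{\omega}(x,y)$, and $d(1\vee\log(d/t))$ when $t\lesssim d_{\theta}^{\omega}(x,y)$ — reproduce the two cases of \eqref{CV} once the crossover scale is absorbed into the constant $c_{4.3}$. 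The hypothesis $d_{\theta}^{\omega}\ge c\,d$ is assumed exactly on $\{d(x,y)\ge N_{x}(\omega)\wedge N_{y}(\omega)\}$, which is precisely the range in which \eqref{CV} is claimed.

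For \eqref{eq:CVmetric} I would run the Davies exponential-perturbation argument (integrated maximum principle) on $G^{\omega}$; this is where \eqref{FPPmetric} is used, and is the reason that condition is imposed. Write $P_{t}^{\omega}:=e^{t\mathcal{L}_{\theta}^{\omega}}$, a self-adjoint contraction on $L^{2}(V(G^{\omega}),\theta^{\omega})$, and note $q_{t}^{\omega}(x,y)=(\theta^{\omega}(x)\theta^{\omega}(y))^{-1}\langle\mathbf{1}_{\{x\}},P_{t}^{\omega}\mathbf{1}_{\{y\}}\rangle_{L^{2}(\theta^{\omega})}$, so that Cauchy--Schwarz already yields $q_{t}^{\omega}(x,y)\le(\theta^{\omega}(x)\theta^{\omega}(y))^{-1/2}$, which covers the near-diagonal range where the exponent in \eqref{eq:CVmetric} is $O(1)$. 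Fix $z\in V(G^{\omega})$, $\lambda>0$, put $\psi(\cdot):=\lambda\,d_{\theta}^{\omega}(\cdot,z)$, $u_{t}:=P_{t}^{\omega}\mathbf{1}_{\{z\}}$, $v_{t}:=e^{\psi}u_{t}$, and look at $E(t):=\|v_{t}\|_{L^{2}(\theta^{\omega})}^{2}$. Using $\tfrac{d}{dt}E(t)=2\langle\mathcal{L}_{\theta}^{\omega}u_{t},e^{2\psi}u_{t}\rangle_{L^{2}(\theta^{\omega})}$ together with the identity
\begin{align*}
\bigl(u(x)-u(y)\bigr)\bigl(e^{2\psi(x)}u(x)-e^{2\psi(y)}u(y)\bigr)=\bigl(v(x)-v(y)\bigr)^{2}-2\,v(x)v(y)\bigl(\cosh(\psi(x)-\psi(y))-1\bigr),
\end{align*}
and discarding the nonnegative Dirichlet energy of $v_{t}$, one obtains $\tfrac{d}{dt}E(t)\le 2\sum_{x}v_{t}(x)^{2}\sum_{y\sim x}\bigl(\cosh(\psi(x)-\psi(y))-1\bigr)\omega_{xy}$. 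Since $|\psi(x)-\psi(y)|\le\lambda\,d_{\theta}^{\omega}(x,y)$ and $\cosh s-1\le\tfrac{s^{2}}{2}e^{|s|}$, the inner sum is $\le\tfrac{\lambda^{2}}{2}\sum_{y}d_{\theta}^{\omega}(x,y)^{2}e^{\lambda d_{\theta}^{\omega}(x,y)}\omega_{xy}$; if this is $\le C\lambda^{2}\theta^{\omega}(x)$ for bounded $\lambda$, then $E'(t)\le C\lambda^{2}E(t)$, and in general $E'(t)\le\gamma(\lambda)E(t)$ with $\gamma(\lambda)<\infty$. Gronwall together with $E(0)=\theta^{\omega}(z)$, and bounding $E(t)$ below by its $x$-term (using $q_{t}^{\omega}(x,z)\theta^{\omega}(z)=u_{t}(x)$), then gives
\begin{align*}
q_{t}^{\omega}(x,z)^{2}\ \le\ \frac{1}{\theta^{\omega}(x)\theta^{\omega}(z)}\,\exp\!\bigl(\gamma(\lambda)t-2\lambda\,d_{\theta}^{\omega}(x,z)\bigr),
\end{align*}
and optimising over $\lambda$ — small $\lambda\asymp d_{\theta}^{\omega}(x,z)/t$ when $t\gtrsim d_{\theta}^{\omega}(x,z)$, producing the Gaussian term; $\lambda$ large when $t\lesssim d_{\theta}^{\omega}(x,z)$, producing the $d_{\theta}^{\omega}(1\vee\log(d_{\theta}^{\omega}/t))$ term — yields \eqref{eq:CVmetric}.

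The step I expect to be the real obstacle is the estimate $\sum_{y}d_{\theta}^{\omega}(x,y)^{2}e^{\lambda d_{\theta}^{\omega}(x,y)}\omega_{xy}\le C\lambda^{2}\theta^{\omega}(x)$: the exponential weight is \emph{not} tamed by \eqref{FPPmetric} alone, since a single edge $x\sim y$ may have $d_{\theta}^{\omega}(x,y)$ arbitrarily large when $\omega_{xy}$ is small. Following Folz, I would split the neighbours of $x$ at a scale $R$: on ``short'' edges ($d_{\theta}^{\omega}(x,y)\le R$) the factor $e^{\lambda d_{\theta}^{\omega}(x,y)}$ is bounded and \eqref{FPPmetric} controls the rest, while \eqref{FPPmetric} also forces ``long'' edges to be sparse, $\sum_{y:\,d_{\theta}^{\omega}(x,y)>R}\omega_{xy}\le\theta^{\omega}(x)/R^{2}$, so their contribution can be absorbed (equivalently, one restricts $\mathcal{L}_{\theta}^{\omega}$ to short edges, perturbs, and controls the error; or runs a finite-propagation-speed argument for $\cos(s\sqrt{-\mathcal{L}_{\theta}^{\omega}})$, which by \eqref{FPPmetric} is supported within $d_{\theta}^{\omega}$-distance $s$, and integrates against a Gaussian weight). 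The remaining care is in gluing the Gaussian and logarithmic regimes across $t\asymp d_{\theta}^{\omega}(x,y)$ and in tracking the constants through the passage from $d_{\theta}^{\omega}$ to $d$. All of this is precisely the content of \cite[Theorems 2.1 and 2.2]{Folz}, so in the paper I would simply cite it.
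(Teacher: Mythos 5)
Your proposal is correct and takes the same route as the paper: both reduce the statement to Folz's Theorems~2.1 and~2.2, which is exactly what the paper cites (without giving any further argument). Your Davies-perturbation sketch of Folz's proof, the caveat about controlling $\sum_{y}d_{\theta}^{\omega}(x,y)^{2}e^{\lambda d_{\theta}^{\omega}(x,y)}\omega_{xy}$, and the final passage from $d_{\theta}^{\omega}$ to $d$ via monotonicity of the exponent are all accurate descriptions of what that citation carries.
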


Next we assume the following three types of integrability conditions. 
\begin{Assumption} \label{Ass30}
  Let $\{ N_{x} \}_{x \in V(G)}$ be as in Assumption \ref{Ass10} and define $f (t) = f_{\epsilon} (t) =  \mathbb{P} (N_{x} \ge t ) $. 
  We consider the following three types of integrability conditions. 
         \begin{enumerate}  \renewcommand{\labelenumi}{(\arabic{enumi})} 
                    \item  $\displaystyle \sum_{n \ge 1} n^{\alpha} f (n)  < \infty$, 

                    \item   $\displaystyle \sum_{n \ge 1} n^{\alpha \beta} f (n)  < \infty$, 
                  
                    \item  For positive and non-increasing function $h(t)$,  $\displaystyle  \sum_{n} n^{\alpha } f (n h(n^{\beta}))  < \infty $.
          \end{enumerate} 
\end{Assumption}

We now state the main results of this paper. 
   \begin{Theorem}  \label{MainLIL}  
        \begin{enumerate}  \renewcommand{\labelenumi}{(\arabic{enumi})}   
               \item  Under Assumption \ref{Ass10} (1) (2) (3) and Assumption \ref{Ass30} (1),  for almost all $\omega \in \Omega$ there exists positive numbers 
                          $c_1 = c_1^{\omega}, c_2 = c_2^{\omega}$ such that 
                               \begin{align}   \label{MainLIL1}  
                                   \begin{split}
                                       & \limsup_{t \to \infty}  \frac{d(Y_0^{\omega}, Y_t^{\omega} )}{t^{1/\beta} (\log \log t)^{1-1/\beta} } 
                                               = c_1, \qquad  \text{$P_x^{\omega}$-a.s. for all $x \in V(G^{\omega})$},    \\
                                        & \limsup_{t \to \infty}  \frac{ \sup_{0\le s \le t} d(Y_0^{\omega}, Y_s^{\omega} )}{t^{1/\beta} (\log \log t)^{1-1/\beta} } 
                                               = c_2, \qquad  \text{$P_x^{\omega}$-a.s. for all $x \in V(G^{\omega})$}.     
                                  \end{split}
                               \end{align}  
     
               \item   Under Assumption \ref{Ass10} (1) (2) (3) and Assumption \ref{Ass30} (2),  for almost all $\omega \in \Omega$ there exist a positive number 
                          $c_3 = c_3^{\omega}$ such that 
                               \begin{align}
                                        \liminf_{t \to \infty} \frac{ \sup_{0 \le s \le t} d(Y_0^{\omega}, Y_s^{\omega} )}{t^{1/\beta} (\log \log t)^{-1/\beta} } 
                                                = c_3, \qquad  \text{$P_x^{\omega}$-a.s. for all $x \in V(G^{\omega})$} .  \label{MainLIL2}     
                               \end{align}  
          \end{enumerate} 
  \end{Theorem}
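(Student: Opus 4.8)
\emph{Proof strategy.}
The plan is the classical subsequence/Borel--Cantelli scheme. Write $\psi(t)=t^{1/\beta}(\log\log t)^{1-1/\beta}$ and $\phi(t)=t^{1/\beta}(\log\log t)^{-1/\beta}$, so that \eqref{MainLIL1} concerns $\psi$ and \eqref{MainLIL2} concerns $\phi$. Each of the three functionals $\limsup_t d(Y_0^\omega,Y_t^\omega)/\psi(t)$, $\limsup_t \sup_{0\le s\le t}d(Y_0^\omega,Y_s^\omega)/\psi(t)$ and $\liminf_t \sup_{0\le s\le t}d(Y_0^\omega,Y_s^\omega)/\phi(t)$ is shift-invariant (one checks, using $\psi(t+s)\sim\psi(t)$ and $d(Y_0^\omega,Y_s^\omega)/\psi(t)\to 0$, that it is unchanged if the path is restarted at a fixed time), so by irreducibility of $Y^\omega$ on $V(G^\omega)$ each is $P_x^\omega$-a.s.\ equal to a constant independent of $x$ (the usual $0$--$1$ argument). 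Since $\sup_{0\le s\le t}d(Y_0^\omega,Y_s^\omega)\ge d(Y_0^\omega,Y_t^\omega)$, for \eqref{MainLIL1} it suffices to prove an upper bound for the running supremum and a lower bound for the endpoint, so the theorem reduces to showing that the three constants lie in $(0,\infty)$ a.s. I will use throughout the standard consequences (developed in Section~\ref{Sec:RFHK}) of the assumptions, valid for $r$ large relative to the relevant $N_z(\omega)$: from \eqref{UHK} and \eqref{Vol}, the exit-time tail bounds $P_x^\omega(\tau_{B(x,r)}\le t)\le c\exp(-c(r^\beta/t)^{1/(\beta-1)})$ and $P_x^\omega(\tau_{B(x,r)}>t)\le c\exp(-ct/r^\beta)$; from \eqref{LHK} and \eqref{Vol}, the near-diagonal lower bound $q_t^\omega(x,y)\ge c\,t^{-\alpha/\beta}$ for $d(x,y)$ small and the confinement lower bound $P_x^\omega(\tau_{B(x,r)}>t)\ge c\exp(-Ct/r^\beta)$.

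\emph{Statement (1).} For the upper bound take $t_n=\theta^n$ ($\theta>1$). The identity $\{\sup_{0\le s\le t_{n+1}}d(Y_0^\omega,Y_s^\omega)\ge\lambda\psi(t_n)\}=\{\tau_{B(x,\lambda\psi(t_n))}\le t_{n+1}\}$ and the first exit-time bound give $P_x^\omega(\tau_{B(x,\lambda\psi(t_n))}\le t_{n+1})\le c\exp(-c\lambda^{\beta/(\beta-1)}\theta^{-1/(\beta-1)}\log\log t_n)\asymp n^{-c\lambda^{\beta/(\beta-1)}\theta^{-1/(\beta-1)}}$, summable for $\lambda$ large; Borel--Cantelli and the monotonicity of $t\mapsto\sup_{0\le s\le t}d/\psi(t)$ on $[t_n,t_{n+1}]$ then yield $\limsup_t \sup_{0\le s\le t}d(Y_0^\omega,Y_s^\omega)/\psi(t)<\infty$ a.s.\ after $\theta\downarrow1$ (whence also $\limsup_t d(Y_0^\omega,Y_t^\omega)/\psi(t)<\infty$). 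For the lower bound put $F_n=\{d(Y_{t_{n-1}}^\omega,Y_{t_n}^\omega)\ge 2\lambda\psi(t_n)\}$; on $F_n$ at least one of $Y_{t_{n-1}}^\omega,Y_{t_n}^\omega$ lies at distance $\ge\lambda\psi(t_n)\ge\lambda\psi(t_{n-1})$ from $Y_0^\omega$, so it is enough that $F_n$ occurs infinitely often. By the Markov property $P^\omega(F_n\mid\mathcal F_{t_{n-1}})$ equals $P_{Y_{t_{n-1}}^\omega}^\omega(d(\cdot,Y_{t_n-t_{n-1}}^\omega)\ge 2\lambda\psi(t_n))$, and summing \eqref{LHK} over the annulus at radius $\asymp\lambda\psi(t_n)$ together with \eqref{Vol} gives $P^\omega(F_n\mid\mathcal F_{t_{n-1}})\ge c\exp(-c\lambda^{\beta/(\beta-1)}\log\log t_n)\asymp n^{-c\lambda^{\beta/(\beta-1)}}$, which diverges for $\lambda$ small; the conditional Borel--Cantelli lemma then gives $F_n$ i.o., i.e.\ $\limsup_t d(Y_0^\omega,Y_t^\omega)/\psi(t)>0$ a.s. Assumption~\ref{Ass30}(1) is used here to certify that \eqref{LHK} is applicable: by the already-proved upper bound $Y_{t_{n-1}}^\omega\in B(x_0,C\psi(t_{n-1}))$ eventually, and $\mathbb P\big(\max_{z\in B(x_0,C\psi(t_{n-1}))}N_z\ge\delta t_n\big)\le c\,\psi(t_{n-1})^\alpha f(\delta t_n)$ is summable in $n$ precisely because of Assumption~\ref{Ass30}(1) (after the change of variables $t_n=\theta^n$, using \eqref{NumVer}).

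\emph{Statement (2).} For the lower bound of the $\liminf$ one shows the walk cannot stay confined for long: $\{\sup_{0\le s\le t_n}d(Y_0^\omega,Y_s^\omega)<\lambda\phi(t_n)\}=\{\tau_{B(x,\lambda\phi(t_n))}>t_n\}$ has $P_x^\omega$-probability $\le c\exp(-ct_n/(\lambda\phi(t_n))^\beta)\asymp n^{-c\lambda^{-\beta}}$, summable for $\lambda$ small, so by Borel--Cantelli $\sup_{0\le s\le t}d(Y_0^\omega,Y_s^\omega)\ge\lambda\phi(t)$ eventually, and letting $\lambda$ increase to the critical value and $\theta\downarrow1$ gives the $\liminf>0$. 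For the upper bound of the $\liminf$ one shows confinement occurs infinitely often: by the confinement lower bound $P_x^\omega(\tau_{B(x,\lambda\phi(t_n))}>t_n)\ge c\exp(-C\lambda^{-\beta}\log\log t_n)\asymp n^{-C\lambda^{-\beta}}$, whose sum diverges once $\lambda$ is large; these nested confinement events are quasi-independent along the geometric subsequence, so a second-moment (Kochen--Stone) argument gives $\{\sup_{0\le s\le t_n}d(Y_0^\omega,Y_s^\omega)<\lambda\phi(t_n)\}$ i.o., i.e.\ $\liminf_t\sup_{0\le s\le t}d(Y_0^\omega,Y_s^\omega)/\phi(t)<\infty$ a.s.\ after letting $\lambda$ decrease to the critical value. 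The stronger integrability Assumption~\ref{Ass30}(2) replaces (1) in part~(2): the Chung-type estimate at the small scale $\phi(t_n)\asymp t_n^{1/\beta}$ demands that the near-diagonal lower bound and \eqref{Vol} hold uniformly over a wider range of space--time scales — in particular over the whole region the continuous-time walk can occupy up to time $t_n$, which by \eqref{CV} is a ball of radius $\asymp t_n$ — and the corresponding complement is summable only under the heavier weight $n^{\alpha\beta}$.

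\emph{Main obstacle.} The crux is that \eqref{LHK} and everything deduced from it are only available in the restricted range $d(x,y)^{1+\epsilon}\le t$ and only beyond the random threshold $N_x(\omega)$, while the LIL scales place one in delicate regimes: the $\limsup$ part requires a genuinely super-diffusive excursion whose probability is only of order $(\log t)^{-c}$, so the lower estimate must be quantitatively sharp, and the $\liminf$ part requires confinement at the small scale $\phi(t)\asymp t^{1/\beta}$. One must therefore choose the subsequence, the target radii and the auxiliary exit/confinement times so that the admissibility condition $d^{1+\epsilon}\le t$ holds on every annulus and ball that enters the Borel--Cantelli sums, and simultaneously dominate the site-dependent thresholds $N_x(\omega)$ over the (polynomially many, by \eqref{NumVer}) sites the path can reach up to time $t_n$. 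This uniform-in-environment bookkeeping, carried out through the two integrability hypotheses of Assumption~\ref{Ass30}, together with the need for quantitative rather than merely qualitative probability estimates, is the technical heart of the proof; the remaining steps are the routine Borel--Cantelli/Kochen--Stone manipulations indicated above.
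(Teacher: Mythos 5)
Your proposal is correct and follows essentially the same route as the paper: a geometric subsequence with first Borel--Cantelli (via the exit-time consequence of \eqref{UHK}) for the $\limsup$ upper bound, a conditional second Borel--Cantelli on annulus events $\{d(Y_{t_{n-1}}^{\omega},Y_{t_n}^{\omega})\ge 2\lambda\psi(t_n)\}$ (via \eqref{LHK} and the already-proved upper bound to localize the starting point) for the $\limsup$ lower bound, confinement probability estimates with a Borel--Cantelli/quasi-independence argument for part (2), the tail $0$--$1$ law to upgrade to a.s.\ constants, and Assumption~\ref{Ass30} plus a union bound over polynomially many sites to control the random thresholds $N_x(\omega)$ along the subsequence. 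One cosmetic slip: in part (2) you invoke \eqref{CV} to bound the spatial range up to time $t_n$, but Assumption~\ref{Ass10}(4) is not part of the hypotheses there; the needed Poissonian speed bound already follows from the $t\le d(x,y)$ branch of \eqref{UHK}, which is assumed.
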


   \begin{Theorem}    \label{Thm:LRF} 
      Suppose Assumption \ref{Ass10} (1) (2) (3) (4) and $\alpha/\beta >1$.
       In addition $\theta^{\omega} (x) = \pi^{\omega} (x) \ge c$ for a  positive constant $c>0$ in the case of CSRW.
      Let $h:(1,\infty) \rightarrow (0,\infty)$ be a function such that $h(t) \searrow 0$ as $t \to \infty$ 
             and the function $\varphi (t) := t^{1/\beta} h(t)$ is increasing. 
      If  $h(t)$ satisfies Assumption \ref{Ass30} (3) and 
                   \begin{align*}
                             \int_1^{\infty} \frac{1}{t} h(t)^{\alpha - \beta} dt < \infty  \text{ or } = \infty
                   \end{align*}
        then 
                   \begin{align*}
                          P_x^{\omega} \left( d(x,Y_t^{\omega}) \ge t^{1/\beta} h(t) \text{ for all sufficiently large $t$} \right) = 1  \text{ or } 0.
                   \end{align*}
   \end{Theorem}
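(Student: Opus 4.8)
The plan is to run a Borel--Cantelli argument along a geometric time sequence $t_n=\theta^n$ ($\theta>1$ to be fixed large), reducing the continuous-time statement to one about the block events
\[
  A_n=\bigl\{\,\exists\,s\in[t_n,t_{n+1}]\ \text{with}\ d(x,Y^\omega_s)<\varphi(s)\,\bigr\}.
\]
Since $\varphi$ is increasing and $h$ non-increasing, one has the sandwich
\[
  \bigl\{B(x,\varphi(t_n))\text{ visited during }[t_n,t_{n+1}]\bigr\}\subseteq A_n\subseteq\bigl\{B(x,\varphi(t_{n+1}))\text{ visited during }[t_n,t_{n+1}]\bigr\},
\]
and the event in the theorem is precisely the complement of $\{A_n\ \text{i.o.}\}$. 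I would first observe that (under $P^\omega_x$, where the start point $x$ is fixed) this event lies in the tail $\sigma$-field $\bigcap_T\sigma(Y^\omega_s:s\ge T)$, which is trivial because $\alpha/\beta>1$ makes $Y^\omega$ transient with the usual Green-function bounds; hence its probability is automatically $0$ or $1$, and only the dichotomy must be decided. Throughout, the quenched heat-kernel inputs \eqref{UHK}, \eqref{LHK}, \eqref{Vol}, \eqref{CV} are used only on balls of radius of order $t_n^{1/\beta}$ about $x_0$, and it is Assumption \ref{Ass30}(3) that, via a preliminary Borel--Cantelli over the $\lesssim t_n^{\alpha/\beta}$ vertices of $B(x_0,\,\mathrm{const}\cdot t_n^{1/\beta})$, guarantees for $\mathbb{P}$-a.e.\ $\omega$ that all these bounds hold once $n$ is large. (For the CSRW the hypothesis $\pi^\omega\ge c$ is what lets one pass freely between ``$Y^\omega_t\in B$'' probabilities and heat-kernel/volume sums.)

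The analytic heart, to be extracted from Section \ref{Sec:RFHK}, is a pair of hitting estimates implied by \eqref{UHK}, \eqref{LHK}, \eqref{Vol}: for $\rho\le d(x,z)$ both above the relevant random threshold,
\[
  P^\omega_z\bigl(\sigma_{B(x,\rho)}<\infty\bigr)\asymp\Bigl(\frac{\rho}{d(x,z)}\Bigr)^{\alpha-\beta},
  \qquad
  P^\omega_z\bigl(\sigma_{B(x,\rho)}\le T\bigr)\ge c\Bigl(\frac{\rho}{d(x,z)}\Bigr)^{\alpha-\beta}\ \text{once}\ T\ \text{is large relative to}\ d(x,z)^{\beta},
\]
the lower bounds being where transience ($\alpha/\beta>1$) enters, together with the displacement bound $P^\omega_x\bigl(d(x,Y^\omega_t)\ge r\bigr)\le C\,t^{-\alpha/\beta}r^\alpha$ for $r\le t^{1/\beta}$ and a Gaussian-type decay for $r\ge t^{1/\beta}$, read off from \eqref{UHK} and \eqref{Vol}.

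For the convergence half ($\int<\infty$), conditioning at $t_n$ and invoking the first hitting bound gives
\[
  P^\omega_x(A_n)\le P^\omega_x\bigl(d(x,Y^\omega_{t_n})<\varphi(t_{n+1})\bigr)+C\,\varphi(t_{n+1})^{\alpha-\beta}\,E^\omega_x\Bigl[d(x,Y^\omega_{t_n})^{-(\alpha-\beta)};\,d(x,Y^\omega_{t_n})\ge\varphi(t_{n+1})\Bigr].
\]
The first term is $\lesssim t_n^{-\alpha/\beta}\varphi(t_{n+1})^\alpha\asymp h(t_{n+1})^\alpha$; splitting the expectation dyadically in $d(x,Y^\omega_{t_n})$ and using the displacement bound, the dominant range is distances of order $t_n^{1/\beta}$ and yields $\lesssim h(t_{n+1})^{\alpha-\beta}$. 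Since $h\le1$ eventually and $\alpha-\beta>0$, $P^\omega_x(A_n)\lesssim h(t_{n+1})^{\alpha-\beta}$, and monotonicity of $h$ gives $\sum_n h(t_{n+1})^{\alpha-\beta}\le(\log\theta)^{-1}\int_\theta^\infty t^{-1}h(t)^{\alpha-\beta}\,dt<\infty$; Borel--Cantelli finishes.

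For the divergence half ($\int=\infty$) I would use the inner events $C_n=\{B(x,\varphi(t_n))\text{ visited during }[t_n,T_n]\}$ with a horizon $T_n$ chosen so that the time-restricted hitting bound applies from distance $\asymp t_n^{1/\beta}$; each $C_n$ is $\mathcal{F}_{T_n}$-measurable, and $C_n$ occurring infinitely often forces $d(x,Y^\omega_s)<\varphi(s)$ for arbitrarily large $s$. Since $P^\omega_x\bigl(d(x,Y^\omega_{t_n})>M t_n^{1/\beta}\bigr)$ is summable for $M$ large, a.s.\ $d(x,Y^\omega_{t_n})\le M t_n^{1/\beta}$ eventually, whence $P^\omega(C_n\mid\mathcal{F}_{t_n})\ge c\,h(t_n)^{\alpha-\beta}$ and $\sum_n P^\omega(C_n\mid\mathcal{F}_{t_n})\ge c\sum_{n\ge n_0(\omega)}h(t_n)^{\alpha-\beta}=\infty$; L\'evy's conditional Borel--Cantelli lemma (or a Kochen--Stone estimate with $P(C_m\cap C_n)\lesssim P(C_m)P(C_n)$ from the Markov property) then gives $P^\omega_x(C_n\ \text{i.o.})=1$, i.e.\ probability $0$ in the theorem. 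The main obstacle is exactly this direction, and within it the clash between the diffusive time scale $d(x,z)^\beta$ and the scale $d(x,z)^{1+\epsilon}$ at which \eqref{LHK} is guaranteed (only $\epsilon<\beta+1$ is assumed, so $1+\epsilon\le\beta$ need not hold): one must take $T_n$ large enough that $P^\omega_z(\sigma_{B(x,\varphi(t_n))}\le T_n)\gtrsim h(t_n)^{\alpha-\beta}$ for $z$ at distance $\asymp t_n^{1/\beta}$, yet keep the $C_n$ usable in the Borel--Cantelli scheme and the series comparable with $\int t^{-1}h(t)^{\alpha-\beta}\,dt$. It is here that the decay of $h$ and Assumption \ref{Ass30}(3) must be used carefully, and I also need to check that the a.s.\ validity of the scale-$t_n$ heat-kernel estimates survives the fact that, unlike in the LIL, the quenched thresholds are now being compared with the slowly growing $\varphi(t_n)$ rather than with $t_n^{1/\beta}$ itself.
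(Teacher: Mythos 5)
Your overall plan---geometric time blocks, quenched Green-function hitting estimates, Borel--Cantelli in both directions, and a tail $0$--$1$ law---is the route the paper takes, and your convergence half is essentially the paper's: Lemma~\ref{RFGr70} is exactly the bound $P_x^\omega\bigl(d(x_0,Y_s^\omega)\le 2r\text{ for some }s>t\bigr)\lesssim r^{\alpha-\beta}t/t^{\alpha/\beta}$, proved by the same conditioning/displacement split you describe, and the paper then sums $h(t_n)^{\alpha-\beta}\asymp\int t^{-1}h(t)^{\alpha-\beta}\,dt$ along $t_n=2^n$.

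In the divergence half you depart from the paper in a way worth flagging. You condition on $\mathcal{F}_{t_n}$ and want to hit $B(x,\varphi(t_n))$ starting from a typical position at distance $\asymp t_n^{1/\beta}$; this needs a \emph{time-restricted hitting lower bound from far} ($d(x,z)\gg\varphi(t_n)$), which is not one of the paper's lemmas and would have to be assembled from the unrestricted lower bound in Lemma~\ref{RFGr30}(2) together with a tail-of-hitting-time decay---doable, but extra work. The paper sidesteps this by only ever invoking hitting estimates from a point \emph{inside} the target ball: Lemma~\ref{RFGr90} gives, for $d(x,x_0)\le r$ and $t\ge r^\beta$, $P_x^\omega\bigl(d(x_0,Y_s^\omega)\le 2r\text{ for some }s\in(t,\eta t]\bigr)\gtrsim r^{\alpha-\beta}t/t^{\alpha/\beta}$ for $\eta$ large, which applied with $x_0=x$, $r=c_1\varphi(\eta^{n+1})$, $t=\eta^n$ gives $\sum_nP_x^\omega(A_n^\omega)=\infty$ directly; correlations are then controlled by conditioning on the first hitting time $\sigma_j$ inside block $j$ and reapplying Lemma~\ref{RFGr70} (with $x_0=x$ and $d(x,Y_{\sigma_j}^\omega)\le 2c_1\varphi(\eta^{j+1})\le r$), which feeds into the Chung--Erd\H{o}s lemma~\ref{LRF20} rather than L\'evy's conditional Borel--Cantelli. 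Both schemes can be made to work; the paper's is cleaner because the constraint $d(x,x_0)\le r$ in its hitting lemmas is maintained throughout. Two smaller remarks: transience alone does not yield tail triviality of the quenched law (one needs irreducibility plus a Harnack-type input), and the paper accordingly invokes a dedicated $0$--$1$ law, Theorem~\ref{RF0-1-30}; and your concern about $1+\epsilon$ versus $\beta$ is well-founded---Assumption~\ref{Ass10} states $\epsilon<\beta+1$, but the proofs of Lemma~\ref{RFGr80} and Theorem~\ref{Upp20} actually use $1+\epsilon<\beta$, so the hypothesis as written appears to contain a typo.
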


Finally we discuss the constants $c_1, c_2, c_3$ in \eqref{MainLIL1} and \eqref{MainLIL2}. 
When we consider a case of $G=\mathbb{Z}^d$, we can take $c_1, c_2$ as deterministic constants under some appropriate assumptions.   
To state this, we take the base point $x_0 = 0 \in \mathbb{Z}^d$ and we write shift operators as $\tau_x, (x \in \mathbb{Z}^d)$, where $\tau_x$ is given by 
      \begin{gather}  \label{shift}
                ( \tau_x \omega)_{yz} = \omega_{x+y, x+z}.
      \end{gather}
We assume the following conditions. 
 
\begin{Assumption} \label{Ass50}
     Assume that $(\Omega, \mathcal{F},\mathbb{P})$ satisfies the following conditions;
                  \begin{enumerate}
                            \item[(1)]       $\mathbb{P}$ is ergodic with respect to the translation operators $\tau_x$, namely $\mathbb{P} \circ \tau_x = \mathbb{P}$ and 
                                                if $\tau_x (A) = A$ for all $x \in \mathbb{Z}^d$ and for all $A \in \mathcal{F}$ then $\mathbb{P} (A) = 0$ or $1$. 
                            
                             \item[(2)]  For almost all environment $\omega$, $V(G^{\omega})$ contains a unique infinite connected component.  
 
                            \item[(3)] (VSRW case) $\displaystyle \mathbb{E} \left[ \frac{1}{\pi^{\omega} (0)} \right] \in (0,\infty)$. 
                 \end{enumerate}

    \end{Assumption}

\begin{Theorem}  \label{Thm:Const}
   Suppose that the same assumptions as in Theorem \ref{MainLIL} and suppose in addition Assumption \ref{Ass50}.  
     Then we can take $c_1, c_2, c_3$ in \eqref{MainLIL1} and \eqref{MainLIL2}  as deterministic constants (i.e. do not depend on $\omega$).
 \end{Theorem}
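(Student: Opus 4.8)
The plan is to show that, for each $i$, the map $\omega\mapsto c_i^\omega$ is invariant under the shift operators $\tau_z$, $z\in\mathbb{Z}^d$, of \eqref{shift}, and then to conclude via the ergodicity in Assumption \ref{Ass50}(1). Throughout, write $C_\infty(\omega)$ for the a.s.\ unique infinite cluster of the configuration $\{e:\omega_e>0\}$ on $G=\mathbb{Z}^d$. Under the standing assumptions the heat kernel bound \eqref{UHK} is incompatible with $V(G^\omega)$ being finite, so the cluster $V(G^\omega)$ of $x_0=0$ is infinite for $\mathbb{P}$-a.e.\ $\omega$ and hence coincides with $C_\infty(\omega)$; in particular $0\in C_\infty(\omega)$ a.s., and since $\mathbb{P}\circ\tau_z=\mathbb{P}$ we also get, for each \emph{fixed} $z$, that $z\in C_\infty(\omega)$ for $\mathbb{P}$-a.e.\ $\omega$. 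The constants $c_1^\omega,c_2^\omega,c_3^\omega$ of Theorem \ref{MainLIL} are thus defined for a.e.\ $\omega$; they depend measurably on $\omega$ by standard arguments, the quenched law $\omega\mapsto P_x^\omega$ being $\mathcal F$-measurable and $c_i^\omega$ being recovered from it as an essential supremum (resp.\ infimum) of a measurable functional of the path.

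The core step is shift covariance. Fix $z\in\mathbb{Z}^d$ and an $\omega$ with $z\in C_\infty(\omega)$, so that $0\in C_\infty(\omega)-z=C_\infty(\tau_z\omega)=V(G^{\tau_z\omega})$. By \eqref{shift} one has $P^{\tau_z\omega}(y,y')=\omega_{z+y,z+y'}/\pi^{\omega}(z+y)=P^{\omega}(z+y,z+y')$ and $\pi^{\tau_z\omega}(y)=\pi^{\omega}(z+y)$, so the walk $Y^{\tau_z\omega}$ started at $0$ has the same law as $\{Y_t^{\omega}-z\}_{t\ge 0}$, where $Y^\omega$ is started at $z$. Since the graph distance of $\mathbb{Z}^d$ is translation invariant, $d(Y_0^{\tau_z\omega},Y_t^{\tau_z\omega})$ and $\sup_{0\le s\le t}d(Y_0^{\tau_z\omega},Y_s^{\tau_z\omega})$ have, under $P_0^{\tau_z\omega}$, the same law (as random functions of $t$) as the corresponding quantities for $Y^\omega$ under $P_z^\omega$. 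Consequently the a.s.\ constant values appearing in \eqref{MainLIL1} and \eqref{MainLIL2} agree: $c_i^{\tau_z\omega}$ equals the value of that same $\limsup$ (resp.\ $\liminf$) for $Y^\omega$ \emph{started at $z$}. Now invoke the crucial clause of Theorem \ref{MainLIL} that this value is the \emph{same for every starting point in $V(G^\omega)$}; hence it equals $c_i^\omega$, the value for the walk started at $0$. Therefore $c_i^{\tau_z\omega}=c_i^\omega$ for $\mathbb{P}$-a.e.\ $\omega$, for each fixed $z\in\mathbb{Z}^d$ and each $i=1,2,3$.

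Since $\mathbb{Z}^d$ is countable, there is a single $\mathbb{P}$-null set off which $c_i^{\tau_z\omega}=c_i^\omega$ holds simultaneously for all $z\in\mathbb{Z}^d$; that is, $\omega\mapsto c_i^\omega$ is $\mathbb{P}$-a.s.\ invariant under every $\tau_z$. For $a\in\mathbb{R}$ the event $\{c_i^\omega\le a\}$ therefore agrees $\mathbb{P}$-a.s.\ with a $\tau$-invariant event, so by Assumption \ref{Ass50}(1) it has probability $0$ or $1$; consequently $c_i^\omega$ is $\mathbb{P}$-a.s.\ equal to the deterministic constant $\sup\{a:\mathbb{P}(c_i^\omega\le a)=0\}$, which is the desired claim. (If one instead prefers to work under the conditional law $\mathbb{P}(\,\cdot\mid 0\in C_\infty)$, one replaces $\{c_i^\omega\le a\}$ by $B_a:=\{\omega:\exists\,z\in C_\infty(\omega),\ c_i^{\tau_z\omega}\le a\}$, which is genuinely $\tau$-invariant on $\{C_\infty\ne\emptyset\}$; ergodicity of $\mathbb{P}$ together with the covariance above again gives $\mathbb{P}(B_a)\in\{0,1\}$ and the same conclusion.)

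The only genuinely substantive point is the second step, and within it the implication "$c_i^{\tau_z\omega}=$ fluctuation constant seen from the vertex $z$ in $\omega$'' $\Rightarrow$ "$=c_i^\omega$''. This is exactly where the starting-point independence built into Theorem \ref{MainLIL} --- the assertion that the $\limsup$/$\liminf$ equal the same constant $P_x^\omega$-a.s.\ \emph{for all} $x\in V(G^\omega)$ --- is indispensable; without it the fluctuation constant viewed from a distant vertex of the same environment could a priori differ and no $\tau_z$-invariance would follow. The remaining ingredients (measurability of $\omega\mapsto c_i^\omega$, the identification of the law of the spatially translated walk, and the reduction of an invariant event to a $0$--$1$ law) are routine.
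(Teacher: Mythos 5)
Your proof is correct, but it takes a genuinely different and more direct route than the paper's. The paper first reduces the continuous-time walk $Y^\omega$ to the embedded discrete-time chain $X^\omega$: in the CSRW case via the strong law of large numbers for the i.i.d.\ holding times ($T_n^\omega/n\to 1$), and in the VSRW case via the ergodicity of the environment-seen-from-the-particle chain on $\Omega^{\mathbb{Z}}$ (Theorem~\ref{Thm:Erg}), which yields $T_n^\omega/n\to\mathbb{E}[1/\pi^\omega(0)]$. It then quotes the zero--one law of Proposition~\ref{Erg0-1} for the discrete-time events $\tilde A_i(a)$, itself proved through the environment-process ergodicity. Your argument skips both the discrete-time reduction and the environment-process machinery: you observe that the quenched law of $Y^{\tau_z\omega}$ started from $0$ coincides with that of $Y^\omega-z$ started from $z$ (valid for CSRW and VSRW alike, since $\pi^{\tau_z\omega}(y)=\pi^\omega(z+y)$), and then invoke the starting-point-independence clause of Theorem~\ref{MainLIL} to identify $c_i^{\tau_z\omega}$ with $c_i^\omega$; the ergodicity in Assumption~\ref{Ass50}(1) then finishes immediately (countability of $\mathbb{Z}^d$ makes the a.s.-invariant event $\{c_i^\omega\le a\}$ agree a.s.\ with a strictly invariant one). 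Your route is shorter and treats CSRW and VSRW on an equal footing; what the paper's route buys in exchange is the explicit conversion factor $\mathbb{E}[1/\pi^\omega(0)]^{1/\beta}$ linking the continuous- and discrete-time LIL constants for the VSRW, which is information your argument does not produce. One small point of presentation: as you note parenthetically, the unconditional event $\{c_i^\omega\le a\}$ is defined only on $\{0\in\mathcal{C}_\infty\}$ (on which $\mathbb{P}$ need not be full), so the clean formulation is the one you give with the genuinely $\tau$-invariant $B_a$, or else one works from the start under $\mathbb{P}_0$ as the paper does.
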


\subsection{Example}  \label{Subsec:Ex} 
 In this subsection, we give some examples for which our results are applicable. 
    
\begin{Example}[Bernoulli supercritical percolation cluster]  
Let $G=(\mathbb{Z}^d, E_d)$ be a graph, where $E_d = \{ \{ x,y \} \mid x,y \in \mathbb{Z}^d, |x-y|_{1} = 1 \}$.   
Put a Bernoulli random variable $\omega_e$ with $\mathbb{P} (\eta_e = 1) = p$ on each edge. 
This model is called bond percolation. We write $p_c (d)$ as the critical probability. 
It is known that there exists a unique infinite connected component when $p > p_c (d)$.  
See \cite{Grimmett} for more details about the percolation. 

Barlow \cite{Barlow1} proved that heat kernels of CSRWs on the super-critical percolation cluster (that is,  when $p > p_c (d)$)  
    on $\mathbb{Z}^d$, $d\ge 2$ satisfy Assumption \ref{Ass10} (1) (2) (3) (4), Assumption \ref{Ass30} (1) (2)
    with $\alpha = d$, $\beta = 2$ and $f_{\epsilon} (t) = c \exp (-c^{\prime} t^{\delta})$ for some $c,c^{\prime},  \delta>0$.
Since the media is i.i.d. and there exists an unique infinite connected component, we can obtain Theorem \ref{MainLIL} with deterministic constants by Theorem \ref{Thm:Const}.

In addition, we can easily check that $\displaystyle h(t) = \frac{1}{(\log t)^{ \kappa / (d-2)}}$ for $\kappa > 0$ satisfy the conditions in Assumption \ref{Ass30} (3)  and
  the assumptions of Theorem \ref{Thm:LRF} in the case of $d>2$.  
Thus $P_x^{\omega} \left( d(x,Y_t^{\omega}) \ge t^{1/\beta} h(t) \text{ for all} \right.$ $\left. \text{sufficiently large $t$ } \right) = 1,0$ 
      according as $\kappa >d-2, \le d-2$ respectively by Theorem \ref{Thm:LRF}. 

Note that \eqref{MainLIL1} for the supercritical percolation cluster was already obtained by \cite[Theorem 1.1]{Duminil-Copin}. 
\end{Example}

\begin{Example}[Gaussian free fields and random  interlacements]  
Gaussian free field on a graph $G = (V,E)$ is a family of centered Gaussian variables $\{ \varphi_x \}_{x \in G}$ with covariance $E[ \varphi_x \varphi_y ] = g(x,y)$, 
   where $g(x,y)$ is the Green function of a random walk on $G$.
Here we are interested in the level sets of the Gaussian free field $E_h = \{ x \in V \mid \varphi_x \ge h \}$.  
We can regard the level sets as one of the percolation models which has correlation among the vertecies in $V$. 
See \cite{SznitmanText} for the details. 
  
The random interlacements concern geometries of random walk trajectories, e.g. how many random walk trajectories are needed to make the underlying graph disconnected?  Sznitman \cite{Sznitman} formulated the model of random interlacements. 
However the model of random interlacements is defined through Poisson point process on a trajectory space, 
 we can also regard this model as the percolation model with long range correlation. From the viewpoint of the RCM, we can regard the model of random interlacements as 
 one of the RCM whose conductances take the value $0$ or $1$ and the conductances are not independent.  
See \cite{DRS} for the details.

Sapozhnikov  \cite[Theorem 1.15]{Sapozhnikov} proved that for $\mathbb{Z}^d$, $d\ge 3$, 
 the CSRWs on (i) certain level sets of Gaussian free fields;  
(ii) random interlacements at level $u>0$; (iii) vacant sets of random interlacements for suitable level sets, satisfy our Assumption \ref{Ass10} (1) (2) (3) 
with $\alpha = d$, $\beta = 2$ and the tail estimates of $N_{x} (\omega)$ as $f_{\epsilon} (t) = c\exp (-c^{\prime} (\log t)^{1+\delta} )$ for some $c,c^{\prime}, \delta >0$.   
As the same reason with the case of Bernoulli supercritical percolation cluster, Assumption \ref{Ass10} (3) is also satisfied in these models.  
This subexponential tail estimate is sufficient for Assumption \ref{Ass30} $(3)$ with $\displaystyle h(t) = \frac{1}{(\log t)^{ \kappa / (d - 2)}}$ for $\kappa > 0$. 
Since the media is ergodic and 
there is an unique infinite connected components (see \cite{RS}, \cite[Corollary 2.3]{Sznitman} and \cite[Theorem 1.1]{Teixeira}), 
Theorem \ref{MainLIL} holds with deterministic constants by Theorem \ref{Thm:Const}, and Theorem \ref{Thm:LRF} holds with $\displaystyle h(t) = \frac{1}{(\log t)^{ \kappa / (d-2)}}$ for $\kappa \ge d-2$ or $<d-2$ respectively.
\end{Example}

\begin{Example}[Uniform elliptic case]
Suppose that a graph $G=(V,E)$ is endowed with weight $1$ on each edge and satisfies \eqref{NumVer} and the scaled Poincar\'e inequalities.  
Take $c_1, c_2 $ as positive constants and put random conductances on all edges so that $c_1\le \omega (e) \le c_2$ for all $e\in E$ and for almost all $\omega$. 
Delmotte \cite{Delmotte} obtained Gaussian heat kernel estimates for CSRWs in this framework. 
Thus Assumption \ref{Ass10} (1) (2) (3) hold with $\beta = 2$ and $N_{x,\epsilon} \equiv 1$.  
Hence Theorem \ref{MainLIL}  holds. 

In addition, Assumption \ref{Ass10} is followed by \cite[Corollary 11, 12]{Davies}.  
(See also Proposition \ref{Prop:FPP}, note that  the graph distance satisfies \eqref{FPPmetric} for CSRW case.) 
Thus Theorem \ref{Thm:LRF} holds with $\displaystyle h(t) = \frac{1}{(\log t)^{ \kappa / (d-2)}}$  ($\kappa \ge d-2$ or $< d-2$ respectively).
\end{Example}

\begin{Example}[Unbounded conductance bounded from below]   \label{Ex:BD}
Let $G = \mathbb{Z}^d$ $(d \ge 2)$ and put random conductances $\omega = \{ \omega_{xy} \}_{xy \in E}$ which take the value $[1,\infty)$. 
 Barlow and Deuschel \cite[Theorem 1.2]{BD} proved that the heat kernels of VSRW satisfy Assumption \ref{Ass10} (1) (2), Assumption \ref{Ass30} (1) (2) with 
  $\alpha = d$, $\beta = 2$ and $f_{\epsilon} (t) = c_1\exp (-c_2 t^{\delta})$ for some $c_1, c_2 , \delta > 0$. 
 (Note that Assumption \ref{Ass10} (3) is trivial since $V^{\omega} (x,r) = \sharp B(x,r)$ for the VSRW.) 
Hence Theorem \ref{MainLIL} holds. 

In addition, Assumption \ref{Ass10} (4) is followed by \cite[Theorem 2.3, Theorem 4.3 (b)]{BD} or  \cite[Theorem 2.1, Theorem 2.2]{Folz}. 
Thus  Theorem \ref{Thm:LRF} for the VSRW holds with $\displaystyle h(t) = \frac{1}{(\log t)^{ \kappa / (d-2)}}$  ($\kappa \ge d-2$ or $< d-2$ respectively). 

Moreover, if the conductances $\{ \omega_e \}_e$ satisfy Assumption \ref{Ass50} (3) then Theorem \ref{MainLIL} holds with deterministic constants.     
\end{Example}

\section{Consequences of Assumption \ref{Ass10}}   \label{Sec:RFHK}

In this section we give some preliminary results of our assumptions. 

\subsection{Consequences of heat kernel estimates} 
In this subsection, we give preliminary results of Assumption \ref{Ass10} (1) (2) (3).  

Recall the notations in $\eqref{UHK}$.  
\begin{Lemma}  \label{RFHK10}
  Suppose Assumption \ref{Ass10} (1) (3). 
  For all $\delta  \in (0, c_{1.2} \wedge c_{1.4})$ there exist positive constants $c_1 = c_1 (\delta), c_2 = c_2 (\delta), c_3 = c_3 (\delta )$ such that
          \begin{align}  \label{RFHK11}
                            P_x^{\omega} \left( d(x,Y_t^{\omega} )  \ge r \right)   
                                 \le c_1 \exp \left[ -(c_{1.2} - \delta )  \left( \frac{r}{t^{1/\beta}} \right)^{\frac{\beta}{\beta -1}}  \right]  
                                  + c_2  \exp \left( - c_3 t \right)   
          \end{align} 
 for almost all $\omega \in \Omega$,  all $x \in V(G^{\omega})$, $r \ge N_{x} (\omega )$ and $t \ge N_{x} (\omega )$.
\end{Lemma}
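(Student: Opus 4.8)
The plan is to write the probability as a sum over the walk's endpoint and to estimate it by a dyadic annular decomposition, treating separately the endpoints within graph distance $t$ of $x$ and those farther away. Since the walk stays on $V(G^{\omega})$ and $P_x^{\omega}(Y_t^{\omega}=y)=q_t^{\omega}(x,y)\,\theta^{\omega}(y)$, I would start from
\begin{align*}
  P_x^{\omega}\bigl(d(x,Y_t^{\omega})\ge r\bigr)=\sum_{\substack{y\in V(G^{\omega})\\ d(x,y)\ge r}}q_t^{\omega}(x,y)\,\theta^{\omega}(y)=\Sigma_1+\Sigma_2,
\end{align*}
where $\Sigma_1$ collects the terms with $r\le d(x,y)\le t$ and $\Sigma_2$ those with $d(x,y)>t$; set $\gamma:=\beta/(\beta-1)$ and $\rho:=r\,t^{-1/\beta}$, so that the asserted bound reads $c_1e^{-(c_{1.2}-\delta)\rho^{\gamma}}+c_2e^{-c_3t}$.

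For $\Sigma_1$ I would cover $\{y:r\le d(x,y)\le t\}$ by the dyadic annuli $A_j:=\{y:2^jr\le d(x,y)<2^{j+1}r\}$, $j\ge0$. For $y\in A_j$ with $d(x,y)\le t$, the first line of \eqref{UHK} (applicable since $t\ge N_x(\omega)$) gives $q_t^{\omega}(x,y)\le c_{1.1}t^{-\alpha/\beta}\exp(-c_{1.2}(2^j\rho)^{\gamma})$, while $\sum_{y\in A_j\cap V(G^{\omega})}\theta^{\omega}(y)\le V^{\omega}(x,2^{j+1}r)\le c_{3.2}(2^{j+1}r)^{\alpha}$ by \eqref{Vol} (valid since $2^{j+1}r\ge N_x(\omega)$). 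Using $(2^jr)^{\alpha}t^{-\alpha/\beta}=(2^j\rho)^{\alpha}$ this gives $\Sigma_1\le c_{1.1}c_{3.2}2^{\alpha}\sum_{j\ge0}(2^j\rho)^{\alpha}\exp(-c_{1.2}(2^j\rho)^{\gamma})$. Now I would split the exponent as $c_{1.2}(2^j\rho)^{\gamma}\ge(c_{1.2}-\delta)\rho^{\gamma}+\delta(2^j\rho)^{\gamma}$ (using $\delta<c_{1.2}$ and $2^j\rho\ge\rho$), which pulls the factor $e^{-(c_{1.2}-\delta)\rho^{\gamma}}$ out of the sum and leaves $\sum_{j\ge0}(2^j\rho)^{\alpha}e^{-\delta(2^j\rho)^{\gamma}}$. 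Since $x\mapsto x^{\alpha}e^{-\delta x^{\gamma}}$ is bounded on $(0,\infty)$ and $\int_0^{\infty}x^{\alpha-1}e^{-\delta x^{\gamma}}\,dx<\infty$, a routine comparison of this geometric sum with that integral bounds it by a constant $C(\delta)$ uniformly in $\rho>0$, whence $\Sigma_1\le c_1e^{-(c_{1.2}-\delta)\rho^{\gamma}}$.

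For $\Sigma_2$, on $\{d(x,y)>t\}$ one has $1\vee\log(d(x,y)/t)\ge1$, so the second line of \eqref{UHK} gives $q_t^{\omega}(x,y)\le c_{1.3}e^{-c_{1.4}d(x,y)}$. Covering $\{d(x,y)>t\}$ by the dyadic annuli $\{y:2^jt\le d(x,y)<2^{j+1}t\}$, $j\ge0$, and using \eqref{Vol} once more (valid since $2^jt\ge N_x(\omega)$), I obtain $\Sigma_2\le c_{1.3}c_{3.2}2^{\alpha}\sum_{j\ge0}(2^jt)^{\alpha}e^{-c_{1.4}2^jt}$; absorbing the polynomial factor into the exponential, and noting that for bounded $t$ the claim is trivial because the right-hand side can be made $\ge1$ there, this is $\le c_2e^{-c_3t}$ with deterministic $c_2,c_3$. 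Adding the two estimates proves the lemma.

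The step I expect to be the main obstacle is keeping $\Sigma_1$ free of any spurious power of $t$ in front of $e^{-(c_{1.2}-\delta)\rho^{\gamma}}$: bounding $\Sigma_1$ crudely by $\sum_{k=\lceil r\rceil}^{\lfloor t\rfloor}c_{1.1}t^{-\alpha/\beta}k^{\alpha}e^{-c_{1.2}(k^{\beta}/t)^{1/(\beta-1)}}$ through the ball bound $V^{\omega}(x,k)\le c_{3.2}k^{\alpha}$ loses a factor of order $t^{1/\beta}$, so the dyadic decomposition together with the splitting of the exponent is what makes the estimate clean; verifying that the remaining geometric sum is bounded uniformly in $\rho$ is the one genuinely delicate point.
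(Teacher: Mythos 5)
Your proof is correct and follows essentially the same strategy as the paper's: decompose $P_x^{\omega}(d(x,Y_t^{\omega})\ge r)$ over annuli, apply the near/far branches of \eqref{UHK} together with the volume bound \eqref{Vol}, and sum the resulting series by sacrificing a small $\delta$ from the exponent's constant. The paper likewise works with geometric annuli $B(x,r\zeta^{k+1})\setminus B(x,r\zeta^k)$ and splits the sum at the index $K$ with $r\zeta^K\le t<r\zeta^{K+1}$; its mechanism for summing the near-field part is slightly different (first absorbing the polynomial prefactor at the price of $\delta$, as in \eqref{RFHK13}, and then invoking the sub-multiplicativity $h_1(\eta,\zeta s)\le h_1(\eta,1)h_1(\eta,s)$ to obtain a geometric series with ratio $h_1(c_{1.2}-\delta,1)<1$), whereas you split the exponent as $c_{1.2}(2^j\rho)^{\gamma}\ge(c_{1.2}-\delta)\rho^{\gamma}+\delta(2^j\rho)^{\gamma}$ and bound the leftover sum by an integral; both devices deliver the same uniform constant. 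Two small ways in which your version is tidier: you split the sum exactly at $d(x,y)=t$ rather than at the $\zeta^K$-annulus boundary, so the correct branch of \eqref{UHK} is applied on each piece without any overlap at the transition, and you never need to dispose of the case $r\le t^{1/\beta}$ separately because $(2^j\rho)^{\gamma}\ge\rho^{\gamma}$ holds for every $\rho>0$, whereas the paper handles $r\le t^{1/\beta}$ up front (step \eqref{RFHK24}) since its sub-multiplicativity estimate \eqref{RFHK17} requires $s\ge1$.
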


This lemma is standard except for the part of estimates of Poissonian regime (the bottom line of \eqref{UHK}). 
For completeness I give the proof here. 

\begin{proof} 
We first prepare some preliminary facts to estimate $P_x^{\omega} \left( d(x,Y_t^{\omega} ) \ge r \right)$.  
Set $\displaystyle h_1 (\eta , s) = \exp \left[ - \eta s^{\beta/(\beta -1)} \right]$ and $h_2 (\eta , s) = \exp \left[ -\eta s \right]$.   
     For $h_1 (\eta, s)$, we can easily see that there exists a constant $\zeta_0 > 1$ such that 
                    \begin{align}   \label{RFHK17}
                               h_1(\eta, \zeta s ) \le h_1(\eta ,1) h_1(\eta, s) 
                    \end{align}
         for all $\zeta \ge \zeta_0$, $\eta >0$ and $s \ge 1$. (We can take $\zeta_0$ as the positive number which satisfies $\zeta_0^{\beta/(\beta -1)} -1 = 1$.)  
     For $h_2 (\eta , s)$, we can easily see that 
                    \begin{align}    \label{RFHK15}
                                h_2 (\eta, \zeta s) \le h_2 (\eta , 1) h_2 (\eta ,s)
                    \end{align}
     for all $\zeta \ge 2$, $\eta > 0$ and $s \ge 1$. 
    Next, we easily see that for all $\zeta > 1$ there exists $c_1 = c_1 (\zeta)$ such that for almost all $\omega \in \Omega$
                    \begin{align}   \label{RFHK12}
                                V^{\omega}  (x, r \zeta) \le c_1 V^{\omega}(x,r)
                    \end{align}  
       for all $x \in V(G)$ and for all $r \ge N_{x} (\omega )$.  (Use \eqref{Vol} and take $c_1 = \frac{c_{3.2} \zeta^{\alpha} }{c_{3.1}}$.) 
   Thirdly, it is also easy to see that for all $\delta \in (0,c_{1.2})$ there exists $c_2 (\delta )$ such that 
                    \begin{align}  \label{RFHK13}
                                 s^{\alpha} \exp \left[ - c_{1.2} s^{\beta/(\beta -1)} \right] \le c_2 (\delta )  \exp \left[ - (c_{1.2} - \delta ) s^{\beta / (\beta - 1)} \right]
                    \end{align}
       for all $s \ge 1$, where $c_{1.2}$ is the same constant as in \eqref{UHK}. 
       We can  also see that for all $\delta \in (0,c_{1.4}) $ there exists a positive constant $c_3 = c_3 (\delta )$ such that 
                    \begin{align}  \label{RFHK16}
                               s^{\alpha} \exp \left[ - c_{1.4} s \right]  \le  c_3 (\delta )    \exp \left[ - (c_{1.4} - \delta ) s \right] 
                    \end{align}
      for all $s \ge 1$. 
  Using \eqref{RFHK13},  we can see that for $d(x,z) \ge s \ge t^{1/\beta}$ and $\delta \in (0,c_{1.2})$  
                    \begin{align}  \label{RFHK14}
                                 & \frac{c_{1.1}}{t^{\alpha /\beta}}  \exp \left[ - c_{1.2} \left(  \frac{d(x,z)}{t^{1/\beta}} \right)^{\beta / (\beta -1)}  \right]  
                                                = \frac{c_{1.1} }{ d(x,z)^{\alpha} }  \left(  \frac{d(x,z)}{t^{1/\beta}} \right)^{\alpha}  
                                                     \exp \left[ -c_{1.2} \left(  \frac{ d(x,z)}{ t^{1/\beta} } \right)^{\beta/ (\beta -1)} \right]  \notag \\
                                 &  \le  \frac{ c_4 (\delta )}{ d(x,z)^{\alpha} } \exp \left[ - (c_{1.2} - \delta )  \left( \frac{d(x,z)}{t^{1/\beta}} \right)^{\beta / (\beta -1)} \right]   
                                                \quad \left( \text{use \eqref{RFHK13}} \right)   \notag \\
                                 &  \le  \frac{ c_4 (\delta )}{ s^{\alpha}}  \exp \left[  - (c_{1.2}- \delta ) \left(  \frac{s}{t^{1/\beta}} \right)^{\beta / (\beta - 1)} \right],  
                                                \quad \left( \text{use $d(x,z) \ge s$} \right).
                    \end{align}

Now we estimate $P_x^{\omega} (d(x,Y_t^{\omega}) \ge r) $.
We first consider the case $r \le t^{1/\beta}$.
Since $s \mapsto h_1 (\eta, s)$, ($\eta > 0$) is non-increasing, we have  
                     \begin{align}  \label{RFHK24}
                                 P_x^{\omega} (d(x,Y_t^{\omega}) \ge r)   \le 1 \le \frac{h_1 \left( c_{1.2}, \frac{r}{t^{1/\beta}} \right)}{h_1 (c_{1.2},1) } = c_5h_1 \left( c_{1.2}, \frac{r}{t^{1/\beta}} \right) ,
                     \end{align}    
   where we set $c_5 = 1/h(c_{1.2}, 1)$.  
So we may and do assume $r \ge t^{1/\beta}$. 
Take $\zeta \ge \zeta_0 \vee 2$ so that 
           \eqref{RFHK17}, \eqref{RFHK15} and \eqref{RFHK12} hold.
 We divide $P_x^{\omega} (d(x,Y_t^{\omega}) \ge r) $ into   
                    \begin{align}   \label{RFHK19}
                                \sum_{k=0}^K  \sum_{z \in B^{\omega} (x, r\zeta^{k+1} ) \setminus B^{\omega} (x, r \zeta^k) }  q_{t}^{\omega} (x,z) \theta^{\omega} (z)  ,
                               \qquad    \sum_{k=K}^{\infty}    \sum_{z \in B^{\omega} (x, r\zeta^{k+1} ) \setminus B^{\omega} (x, r \zeta^k) }  q_{t}^{\omega}(x,z) \theta^{\omega} (z) ,             
                    \end{align}
           where $K$ is the positive integer which satisfies $r\zeta^K \le t < r \zeta^{K+1}$. 
We have for $t \ge N_{x} (\omega )$, $r \ge N_{x} (\omega )$ and using \eqref{UHK} 
                   \begin{align}  \label{RFHK20} 
                            & (\text{The first term of \eqref{RFHK19}})
                                   \le  \sum_{k=0}^K  \sum_{z \in B^{\omega} (x,r\zeta^{k+1}) \setminus B^{\omega} (x,r\zeta^k) }
                                   \frac{ c_{1.1}}{t^{\alpha / \beta} }  \exp \left[ -c_{1.2}  \left(   \frac{ d(x,z) }{ t^{1/\beta} } \right)^{\beta / (\beta - 1)}  \right]  
                                       \theta^{\omega} (z)  \notag \\
                            & \le   \sum_{k=0}^K  \frac{c_6 (\delta )}{ (r\zeta^k)^{\alpha} }  
                                         \exp \left[  - (c_{1.2} - \delta ) \left(  \frac{r \zeta^k}{t^{1/\beta}} \right)^{\beta / (\beta -1)}  \right]  (r\zeta^{k+1})^{\alpha}
                                         \qquad  \left( \text{use \eqref{RFHK14} and \eqref{Vol} }  \right)   \notag \\
                            &   \le   \sum_{k=0}^K   c_7 (\delta , \zeta )  h_1 \left( c_{1.2} - \delta , \frac{r \zeta^k}{t^{1/\beta}} \right)    \notag \\
                            &    \le  c_7 (\delta , \zeta )  h_1 \left( c_{1.2} - \delta , \frac{r}{t^{1/\beta}}  \right)    \sum_{k=0}^K    h_1 (c_{1.2} - \delta , 1)^k  
                                         \qquad   \left( \text{use \eqref{RFHK17}} \right)     \notag    \\
                            &   \le  c_8 (\delta , \zeta )   \exp \left[ - (c_{1.2} - \delta ) \left( \frac{r}{t^{1/\beta}}  \right)^{\beta /(\beta - 1)}  \right],  
                                          \qquad (\text{since $h_1 (c_{1.2} - \delta, 1) < 1$} ). 
                  \end{align}
    For the second term of \eqref{RFHK19}, using \eqref{UHK}, $t \ge N_{x} (\omega )$ and $r \ge N_{x} (\omega )$ we have 
                  \begin{align}    \label{RFHK23}
                            &  (\text{The second term of \eqref{RFHK19}})
                                \le \sum_{k=K}^{\infty}   \sum_{z \in B^{\omega} (x,r\zeta^{k+1}) \setminus  B^{\omega} (x,r\zeta^k)} 
                                        c_{1.3}  \exp \left[ - c_{1.4}  d(x,z) \left(  1 \vee \log \frac{d(x,z)}{t}  \right)   \right]  \theta^{\omega} (z)   \notag   \\
                            &   \le  \sum_{k=K}^{\infty}   \sum_{z \in B^{\omega} (x,r\zeta^{k+1}) \setminus  B^{\omega} (x,r\zeta^k)} 
                                        c_{1.3}  \exp \left[ - c_{1.4}  d(x,z)  \right]   \theta^{\omega} (z)    
                                        \quad  \left(  \text{since $1 \vee \log \frac{d(x,z)}{t}  \ge 1$}   \right)   \notag   \\      
                            &  \le   \sum_{k=K}^{\infty}   c_9   \exp \left[ - c_{1.4} (r\zeta^k) \right]  (r\zeta^{k+1})^{\alpha}  
                                         \qquad  \left(  \text{use \eqref{Vol}}  \right)        \notag    \\  
                            &  \le c_{10} (\zeta, \delta )  \sum_{k=K}^{\infty}  \exp \left[- (c_{1.4}- \delta ) r \zeta^k  \right]  
                                          \qquad  \left(  \text{use \eqref{RFHK16}} \right)   \notag  \\
                            &    =  c_{10} (\zeta, \delta )  \sum_{k=K}^{\infty}  h_2 \left( c_{1.4}- \delta ,  r \zeta^k  \right)    \notag    \\
                            &   \le c_{11} (\zeta , \delta )  h_2 (c_{1.4} -\delta, r\zeta^K )   \sum_{k=0}^{\infty} h_{2} (c_{1.4} - \delta ,1)^k   
                                         \qquad   \left(  \text{use \eqref{RFHK15}}  \right)  \notag  \\
                            &   \le c_{12} (\zeta , \delta ) \exp  \left[ - c_{13} (\zeta, \delta)t \right],   
                                       \qquad \left( \text{since $r\zeta^K \le t < r\zeta^{K+1}$}  \right) .    
                  \end{align}
  
Therefore, by \eqref{RFHK24}, \eqref{RFHK20}, \eqref{RFHK23} and adjusting the constants, we obtain \eqref{RFHK11}. 
We thus complete the proof.
\end{proof}

Again recall the notations $c_{1.2}$ and $c_{1.4}$ in \eqref{UHK}. 
\begin{Lemma}   \label{RFHK30} 
  Suppose Assumption \ref{Ass10} (1) (3).  
  For all $\delta \in (0, c_{1.2} \wedge c_{1.4})$ there exist positive constants $c_1 = c_1(\delta ), c_2 =c_2 (\delta ), c_3 = c_3 (\delta)$ such that 
        \begin{align}   
                        & P_x^{\omega} \left(  \sup_{0 \le s \le t} d(x, Y_s^{\omega} )  \ge 2r \right)  
                              \le c_1  \exp \left[ - (c_{1.2} - \delta ) \left( \frac{r}{(2t)^{1/\beta}}  \right)^{\beta/(\beta -1)} \right]  + c_2 \exp \left[ -c_3 t \right]  \label{RFHK31-1}   \\  
                        & P_x^{\omega} \left(  \sup_{0 \le s \le t} d(y, Y_s^{\omega} )  \ge 4r \right)  
                              \le c_1  \exp \left[ - (c_{1.2} - \delta ) \left( \frac{r}{(2t)^{1/\beta}}  \right)^{\beta/(\beta -1)} \right]  + c_2 \exp \left[ -c_3 t \right]     \label{RFHK31-2}  
          \end{align}
  for almost all $\omega \in \Omega$, all $x,y \in V(G^{\omega})$, $t \ge 1$ and $r \ge 1$ with 
  $d(x,y) \le 2r$, $\displaystyle t \ge \max_{u \in B(x,2r)} N_{u} (\omega ) $ and $\displaystyle r \ge \max_{u \in B (x,2r)} N_{u} (\omega )$.
\end{Lemma}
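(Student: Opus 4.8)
The plan is to derive \eqref{RFHK31-2} from \eqref{RFHK31-1}, and to prove \eqref{RFHK31-1} by a first‑exit (reflection‑type) argument built on Lemma \ref{RFHK10}. For \eqref{RFHK31-2} one simply notes that since $d(x,y)\le 2r$, on $\{\sup_{0\le s\le t}d(y,Y^{\omega}_s)\ge 4r\}$ there is $s\le t$ with $d(x,Y^{\omega}_s)\ge d(y,Y^{\omega}_s)-d(x,y)\ge 2r$, so this event is contained in $\{\sup_{0\le s\le t}d(x,Y^{\omega}_s)\ge 2r\}$; the hypotheses imposed are exactly those of \eqref{RFHK31-1} and the right‑hand sides coincide, so \eqref{RFHK31-1} yields \eqref{RFHK31-2}.

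For \eqref{RFHK31-1} I would set $\sigma:=\inf\{s\ge0\mid d(x,Y^{\omega}_s)\ge 2r\}$, a stopping time, so that $\{\sup_{0\le s\le t}d(x,Y^{\omega}_s)\ge 2r\}=\{\sigma\le t\}$, and on $\{\sigma\le t\}$ one has $d(x,Y^{\omega}_{\sigma})\ge 2r$ and, since $Y^{\omega}$ jumps along edges of $G$, $Y^{\omega}_{\sigma}\in B(x,2r)$ (assuming $2r\in\mathbb N$ for simplicity; the general case needs only cosmetic rounding). The idea is that once the walk has reached distance $2r$ it is still at distance $\ge r$ from $x$ at the later deterministic time $2t$ with probability bounded below: if $\sigma\le t$ and $d(Y^{\omega}_{\sigma},Y^{\omega}_{2t})\le r$ then $d(x,Y^{\omega}_{2t})\ge 2r-r=r$. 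Writing $g(w,v):=P^{\omega}_w(d(w,Y^{\omega}_v)\le r)$ and using that on $\{\sigma\le t\}$ we have $2t-\sigma\in[t,2t]$, the strong Markov property at $\sigma$ gives
\begin{align}
 P^{\omega}_x\big(d(x,Y^{\omega}_{2t})\ge r\big)\ \ge\ P^{\omega}_x\big(\sigma\le t,\ d(Y^{\omega}_{\sigma},Y^{\omega}_{2t})\le r\big)\ =\ E^{\omega}_x\big[\mathbf 1_{\{\sigma\le t\}}\,g(Y^{\omega}_{\sigma},\,2t-\sigma)\big].  \notag
\end{align}
So it suffices to bound $g(w,v)$ from below for $w\in B(x,2r)$ and $v\in[t,2t]$, and then a second application of Lemma \ref{RFHK10} with base point $x$, radius $r$ and time $2t$ (legitimate since $r\wedge 2t\ge N_x(\omega)$ by hypothesis) finishes the proof.

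For the lower bound on $g$ I would first discard the trivial ranges: if $r<\lambda(2t)^{1/\beta}$ or $t<T_0$, for constants $\lambda=\lambda(\delta)$, $T_0=T_0(\delta)$ to be chosen, then the right‑hand side of \eqref{RFHK31-1} exceeds $1$ once $c_1,c_2$ are taken large, so \eqref{RFHK31-1} is trivial. Otherwise $r\ge\lambda(2t)^{1/\beta}$ and $t\ge T_0$, and for $w\in B(x,2r)$, $v\in[t,2t]$ the hypotheses give $N_w(\omega)\le r\wedge t\le r\wedge v$, so Lemma \ref{RFHK10} applied with base point $w$, radius $r$, time $v$, together with $r/v^{1/\beta}\ge\lambda$ and $v\ge T_0$, makes $1-g(w,v)=P^{\omega}_w(d(w,Y^{\omega}_v)\ge r)\le\tfrac12$ provided $\lambda$ and $T_0$ are large enough in terms of $\delta$; hence $g(w,v)\ge\tfrac12$ uniformly. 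This is the step I expect to be the only real obstacle: Lemma \ref{RFHK10} is informative only when $r$ is large compared with $t^{1/\beta}$ and $t$ is bounded below, so no unconditional lower bound on the survival probability $g$ is available — but restricting to the regime $r\gtrsim t^{1/\beta}$, $t\gtrsim 1$, the only one in which \eqref{RFHK31-1} is not already trivial, removes the difficulty. Inserting $g\ge\tfrac12$ into the display gives $\tfrac12 P^{\omega}_x(\sigma\le t)\le P^{\omega}_x(d(x,Y^{\omega}_{2t})\ge r)$, and Lemma \ref{RFHK10} bounds the right‑hand side as in \eqref{RFHK11} with $t$ replaced by $2t$; after adjusting the constants (also so as to absorb the trivial ranges) we obtain \eqref{RFHK31-1}, whence \eqref{RFHK31-2} as above.
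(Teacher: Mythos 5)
Your proof is correct and implements exactly the standard reflection argument (stopping at the first exit time $\sigma$ from $B(x,2r)$, then using the strong Markov property and a one-sided estimate from Lemma \ref{RFHK10} to show the walk is still far from $x$ at time $2t$ with probability $\ge 1/2$ in the only nontrivial regime $r\gtrsim t^{1/\beta}$, $t\gtrsim 1$). This is precisely the argument in the reference the paper cites (\cite[Lemma 3.9(c)]{Barlow2}); the paper itself omits the proof, and your derivation of \eqref{RFHK31-2} from \eqref{RFHK31-1} via the triangle inequality and $d(x,y)\le 2r$ is also the intended route.
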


\begin{proof}
This is standard (see the proof of \cite[Lemma 3.9 (c)]{Barlow2}), so we omit the proof.
\end{proof}

\begin{Lemma} \label{RFHK80} 
  Suppose Assumption \ref{Ass10} (1) (2) (3). 
Then there exist positive constants $\eta \ge 1, c_1, c_2>0$ such that
      \begin{align}  \label{RFHK81} 
              P_x^{\omega} \left(  \sup_{0 \le s \le t} d(x,Y_s^{\omega} ) \le 3 \eta r \right)  \ge c_1 \exp \left[  -c_2 \frac{t}{r^{\beta}} \right]
      \end{align}
  for almost all $\omega \in \Omega$, all $x \in V(G^{\omega})$, $t \ge r \ge 1$ with 
  $\displaystyle  r^{1/\beta} \ge \max_{z \in B(y,3\eta r)} N_{z} (\omega) $. 
\end{Lemma}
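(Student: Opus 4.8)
The plan is to prove this by the standard ``chaining in time'' argument. I break $[0,t]$ into $K=\lceil t/\ell\rceil\asymp t/r^{\beta}$ consecutive blocks of length $\ell=\mu r^{\beta}$ (with $\mu$ a large constant to be fixed), force the walk to return to a ball $B(x,\lambda r)$ at the end of each block and to have stayed inside $B(x,3\eta r)$ throughout the block, and then multiply the one-block probabilities via the Markov property. Writing $V^{\omega}(x,\cdot)$-balls inside $V(G^{\omega})$ throughout, set
\[
  p_0:=\inf_{y\in B(x,\lambda r)\cap V(G^{\omega})} P_y^{\omega}\Big(\sup_{0\le s\le\ell}d(x,Y_s^{\omega})\le 3\eta r,\ Y_{\ell}^{\omega}\in B(x,\lambda r)\Big).
\]
Applying the strong Markov property successively at the times $\ell,2\ell,\dots,(K-1)\ell$ (each time using that the previous block placed the walk in $B(x,\lambda r)$, and that $x\in B(x,\lambda r)$ trivially) yields
\[
  P_x^{\omega}\Big(\sup_{0\le s\le t}d(x,Y_s^{\omega})\le 3\eta r\Big)\ \ge\ p_0^{K}\ \ge\ p_0\exp\!\big(-(\log(1/p_0))\,t/\ell\big)\ =\ c_1\exp(-c_2 t/r^{\beta}),
\]
with $c_1=p_0$ and $c_2=\log(1/p_0)/\mu$, so it suffices to bound $p_0$ below by a positive, $\omega$-independent constant.

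For the one-block estimate I would write, for $y\in B(x,\lambda r)$,
\[
  P_y^{\omega}\Big(\sup_{0\le s\le\ell}d(x,Y_s^{\omega})\le 3\eta r,\ Y_{\ell}^{\omega}\in B(x,\lambda r)\Big)\ \ge\ P_y^{\omega}\big(Y_{\ell}^{\omega}\in B(x,\lambda r)\big)-P_y^{\omega}\Big(\sup_{0\le s\le\ell}d(y,Y_s^{\omega})\ge(3\eta-\lambda)r\Big).
\]
For the first term, $P_y^{\omega}(Y_{\ell}^{\omega}\in B(x,\lambda r))=\sum_{w\in B(x,\lambda r)\cap V(G^{\omega})}q_{\ell}^{\omega}(y,w)\theta^{\omega}(w)$; since $d(y,w)\le 2\lambda r$ and $\ell=\mu r^{\beta}\ge r\ge r^{1/\beta}\ge N_z(\omega)$ for all $z\in B(x,3\eta r)$ (assuming $2\lambda\le 3\eta$ and using the hypothesis $r^{1/\beta}\ge\max_{z\in B(x,3\eta r)}N_z(\omega)$), the near-diagonal lower bound \eqref{LHK} gives $q_{\ell}^{\omega}(y,w)\ge c\,\ell^{-\alpha/\beta}\exp[-c_{2.2}((2\lambda)^{\beta}/\mu)^{1/(\beta-1)}]\ge c'r^{-\alpha}$ once $\mu\ge(2\lambda)^{\beta}$, so together with \eqref{Vol} one gets $P_y^{\omega}(Y_{\ell}^{\omega}\in B(x,\lambda r))\ge c'r^{-\alpha}V^{\omega}(x,\lambda r)\ge q_1>0$ with $q_1$ depending only on $\mu,\lambda$ (not on $\eta$). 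For the second term, Lemma \ref{RFHK30} (namely \eqref{RFHK31-1} with $2r$ there replaced by $(3\eta-\lambda)r$, legitimate since $d(x,y)\le\lambda r$ and all relevant $N_z$ are $\le r^{1/\beta}\le\ell$) gives
\[
  P_y^{\omega}\Big(\sup_{0\le s\le\ell}d(y,Y_s^{\omega})\ge(3\eta-\lambda)r\Big)\ \le\ c\exp\!\Big[-(c_{1.2}-\delta)\Big(\tfrac{(3\eta-\lambda)r}{(2\ell)^{1/\beta}}\Big)^{\beta/(\beta-1)}\Big]+c\exp[-c_3\ell].
\]
With $\ell=\mu r^{\beta}$ the last summand is $\le c\exp(-c_3\mu)$, and the first is $\le c\exp[-c''(\eta/\mu^{1/\beta})^{\beta/(\beta-1)}]$. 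The constants are then chosen in the order: $\mu$ large (so $c\exp(-c_3\mu)<q_1/4$ and $\mu\ge(2\lambda)^{\beta}$), then $\lambda\asymp\mu^{1/\beta}$ (so that $q_1$ is a genuine constant and $\lambda r\ge r^{1/\beta}\ge N_x$ for the volume bound), then $\eta$ large relative to $\lambda$ (so the first summand is $<q_1/4$); all are deterministic, and $p_0\ge q_1/2>0$ follows.

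Two points remain. When $t<\ell$ we have $K=1$ and must directly check $P_x^{\omega}(\sup_{0\le s\le t}d(x,Y_s^{\omega})\le 3\eta r)\ge c_1\exp(-c_2 t/r^{\beta})$; the right side is at least the constant $c_1\exp(-c_2\mu)$, and since $t\ge r\ge1$ one has $\exp(-c_3 t)\le\exp(-c_3 r)$, so for large $r$ the bound follows from $1-P_x^{\omega}(\sup_{0\le s\le t}d(x,Y_s^{\omega})\ge 3\eta r)$ via \eqref{RFHK31-1} exactly as above, while for bounded $r$ the horizon $t<\mu r^{\beta}$ is bounded and the one-block estimate applies verbatim; shrinking $c_1$ settles this case. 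The only genuinely delicate point — and the main obstacle — is making the heat-kernel lower bound available at the scales needed in the return estimate: \eqref{LHK} is asserted only on $\{d(x,y)^{1+\epsilon}\le t\}$, whereas we use it at spatial scale $\asymp r$ and time $\asymp r^{\beta}$. When $1+\epsilon\le\beta$ this region contains $\{d^{\beta}\lesssim t\}$ and there is nothing to do; when $1+\epsilon>\beta$ one must first upgrade \eqref{LHK} by chaining it through intermediate points along a geodesic from $y$ to $w$ — each leg of spatial length within the range of \eqref{LHK} — and it is here that the standing hypothesis $\epsilon<\beta+1$, together with the assumption $N_z(\omega)\le r^{1/\beta}$ throughout $B(x,3\eta r)$, is used to keep the number of legs, the accumulated volume factors from \eqref{Vol}, and the total elapsed time under control. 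Everything else (the Markov iteration, the use of Lemma \ref{RFHK30}, and the volume estimates) is routine.
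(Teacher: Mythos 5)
Your chaining-in-time argument --- blocks of length $\ell=\mu r^{\beta}$, forcing a return to $B(x,\lambda r)$ at the end of each block while staying inside $B(x,3\eta r)$, then iterating the strong Markov property --- is the standard proof of this kind of lower bound and is almost surely what the omitted proof does (the paper defers to \cite[Proposition 3.3]{KN2}). The one-block bound via $P_y^{\omega}(Y_{\ell}^{\omega}\in B(x,\lambda r))-P_y^{\omega}(\sup_{s}d(y,Y_s^{\omega})\ge(3\eta-\lambda)r)$, the use of \eqref{LHK} and \eqref{Vol} for the first term, Lemma~\ref{RFHK30} for the second, and the absorption of the $t<\ell$ case are all sound, as is your tacit correction of the typo $B(y,3\eta r)\mapsto B(x,3\eta r)$ in the statement.

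Where I would push back is the final paragraph. You flag the window of validity of \eqref{LHK} as ``the main obstacle'' when $1+\epsilon>\beta$ and propose to repair it by chaining \eqref{LHK} along a geodesic. That worry is unnecessary here, and the proposed repair would in fact fail. It is unnecessary because, although Assumption~\ref{Ass10} displays the inequality ``$\epsilon<\beta+1$,'' both the proof of Lemma~\ref{RFGr80} and the proof of Theorem~\ref{Upp20} invoke the statement ``$1+\epsilon<\beta$ (see Assumption~\ref{Ass10})''; the intended standing hypothesis is thus $\epsilon<\beta-1$ (the sign in Assumption~\ref{Ass10} is a typo), and under $1+\epsilon<\beta$ you already note that \eqref{LHK} is directly available at spatial scale $\asymp r$ and time $\asymp r^{\beta}$, so nothing needs to be chained. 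It would also not work if $1+\epsilon>\beta$ genuinely held: to push each leg into the validity region $d^{1+\epsilon}\lesssim$ (time) one needs $(r/m)^{1+\epsilon}\lesssim r^{\beta}/m$, i.e.\ $m\gtrsim r^{(1+\epsilon-\beta)/\epsilon}\to\infty$ as $r\to\infty$, and every leg contributes a multiplicative loss of order $e^{-C}$, so the chained near-diagonal bound degrades like $\exp(-Cm(r))$. That forces your one-block probability $p_0$ to tend to $0$ as $r\to\infty$, which is incompatible with the $r$-uniform constants $c_1,c_2$ required in \eqref{RFHK81}. A qualitatively different argument would be needed in that regime, but under the paper's (corrected) assumptions it never arises.
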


\begin{proof}
 The proof is quite similar to that of \cite[Proposition 3.3]{KN2}, so we omit the proof. 
\end{proof}

 Let $c_1,c_2$ be as in Lemma \ref{RFHK80}. Note that we can assume that $c_1 < 1$ (and therefore $c_1 \exp [ -c_2] \in (0,1)$). 
 We define $\rho_1, a_k, b_k, \lambda_k, u_k, \sigma_k$ as 
        \begin{equation}  
             \begin{split}  \label{Not10} 
                 & \rho_1 = c_1 \exp [ -c_2], ~~ a_k^{\beta} = e^{k^2}, ~~   b_k^{\beta} = e^k,     \\
                 &\lambda_k = \frac{2}{3 |\log \rho_1 |} \log (1+k), ~~    u_k = \lambda_k a_k^{\beta}, ~~   \sigma_k = \sum_{i=1}^{k-1} u_i .
             \end{split}
        \end{equation}

\begin{Corollary}[Corollary of Lemma \ref{RFHK80} ]    \label{RFHK90}
 Let $\eta \ge 1$ be as in Lemma \ref{RFHK80}. 
 Then under Assumption \ref{Ass10} (1) (2) (3) we have
                         \begin{align}   \label{RFHK92}
                                    \inf_{z \in B(x,a_k ) }  P_z^{\omega }  \left(  \sup_{0 \le s \le u_k} d(z,Y_s^{\omega} )  \le 3\eta a_k  \right)  \ge \rho_1^{\lambda_k}
                         \end{align}
    for almost all $\omega \in \Omega$, all $k$ with $\displaystyle \max_{z \in B(x, 4\eta a_k )} N_{v} (\omega ) \le a_k^{1/\beta}$.
\end{Corollary}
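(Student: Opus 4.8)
The plan is to derive \eqref{RFHK92} from Lemma \ref{RFHK80} by a short covering/monotonicity argument, so the only real work is bookkeeping with the constants in \eqref{Not10}. First I would fix $\omega$ in the full-measure set where Lemma \ref{RFHK80} holds, fix $k$ with $\max_{v \in B(x,4\eta a_k)} N_v(\omega) \le a_k^{1/\beta}$, and fix an arbitrary $z \in B(x,a_k)$. The goal is to apply \eqref{RFHK81} with the radius parameter equal to $a_k$ and the time parameter equal to $u_k = \lambda_k a_k^{\beta}$. For that I must check the two hypotheses of Lemma \ref{RFHK80}: the ordering $u_k \ge a_k \ge 1$, and the pointwise bound $a_k^{1/\beta} \ge \max_{v \in B(z,3\eta a_k)} N_v(\omega)$.

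For the second hypothesis, note that since $z \in B(x,a_k)$ and $\eta \ge 1$, the ball $B(z,3\eta a_k)$ is contained in $B(x, a_k + 3\eta a_k) \subseteq B(x,4\eta a_k)$, so $\max_{v \in B(z,3\eta a_k)} N_v(\omega) \le \max_{v \in B(x,4\eta a_k)} N_v(\omega) \le a_k^{1/\beta}$ by the standing assumption on $k$; this is exactly why the statement asks for control on the slightly larger ball $B(x,4\eta a_k)$. For the ordering hypothesis $u_k \ge a_k$: we have $a_k^{\beta} = e^{k^2} \ge 1$ and $\lambda_k = \frac{2}{3|\log\rho_1|}\log(1+k)$; since $\varphi(t) = t^{1/\beta}$-type monotonicity is not the issue here, I just need $\lambda_k a_k^{\beta} \ge a_k$, i.e. $\lambda_k \ge a_k^{1-\beta} = e^{k^2(1-\beta)/\beta}$, which holds for all $k \ge 1$ because the right side is at most $1$ (as $\beta \ge 1$) while $\lambda_k$ — hmm, $\lambda_1 = 0$. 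So a tiny care is needed at small $k$: either the statement is understood for $k$ large enough, or one absorbs finitely many exceptional $k$ into the (allowed) $\omega$-dependence, or one checks $a_k \ge 1$ and notes the claim is vacuous/trivial when $\lambda_k$ is small. I would remark that \eqref{RFHK92} is only needed for large $k$ and restrict attention there, where $\lambda_k a_k^{\beta} \ge a_k \ge 1$ is immediate.

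With both hypotheses verified, Lemma \ref{RFHK80} applied at $x \rightsquigarrow z$, radius $a_k$, time $u_k$ gives
\begin{align*}
  P_z^{\omega}\left( \sup_{0 \le s \le u_k} d(z, Y_s^{\omega}) \le 3\eta a_k \right) \ge c_1 \exp\left[ -c_2 \frac{u_k}{a_k^{\beta}} \right] = c_1 \exp[-c_2 \lambda_k].
\end{align*}
It remains to rewrite the right-hand side as $\rho_1^{\lambda_k}$ up to the correct inequality. Since $\rho_1 = c_1\exp[-c_2]$ and $\rho_1 \in (0,1)$ by the normalization $c_1 < 1$ noted before \eqref{Not10}, and since $\lambda_k \ge 1$ precisely when $\log(1+k) \ge \tfrac{3}{2}|\log\rho_1|$, i.e. for $k$ large, we have $c_1^{\lambda_k} \le c_1$ (as $c_1<1$, $\lambda_k \ge 1$), hence $\rho_1^{\lambda_k} = c_1^{\lambda_k}\exp[-c_2\lambda_k] \le c_1\exp[-c_2\lambda_k]$, which is exactly the bound just obtained. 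Taking the infimum over $z \in B(x,a_k)$ (the estimate is uniform in $z$) yields \eqref{RFHK92}.

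The main obstacle is not any deep estimate — it is entirely the elementary verification that the scaling choices in \eqref{Not10} put us in the regime where Lemma \ref{RFHK80} applies, together with the mild annoyance that $\lambda_k \to 0$-type small-$k$ behavior forces one to state the corollary ``for all sufficiently large $k$'' (which is all that is used downstream). I would present the proof in the order: (i) reduce to large $k$; (ii) geometric inclusion $B(z,3\eta a_k)\subseteq B(x,4\eta a_k)$ to transfer the $N_v$ bound; (iii) check $u_k \ge a_k \ge 1$; (iv) invoke Lemma \ref{RFHK80}; (v) compare $c_1\exp[-c_2\lambda_k]$ with $\rho_1^{\lambda_k}$ using $c_1 < 1$ and $\lambda_k \ge 1$; (vi) take the infimum over the starting point.
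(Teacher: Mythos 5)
Your argument is essentially identical to the paper's: apply Lemma \ref{RFHK80} with radius $a_k$ and time $u_k$, use the inclusion $B(z,3\eta a_k)\subseteq B(x,(1+3\eta)a_k)\subseteq B(x,4\eta a_k)$ (valid since $\eta\ge 1$) to transfer the bound on $N_v$, and then compare $c_1 e^{-c_2\lambda_k}$ with $\rho_1^{\lambda_k}=c_1^{\lambda_k}e^{-c_2\lambda_k}$ via $c_1<1$ and $\lambda_k\ge 1$. Your observation that the final comparison requires $\lambda_k\ge 1$, and hence $k$ large (depending on $\rho_1$), is a legitimate point that the paper's one-line proof silently elides; note only the small arithmetic slip that $\lambda_1=\frac{2\log 2}{3|\log\rho_1|}>0$, not $0$, since $\lambda_k$ involves $\log(1+k)$ rather than $\log k$ --- though this does not change the substance of your remark that the corollary is genuinely needed only for large $k$.
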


\begin{proof} 
 We can see from  Lemma \ref{RFHK80} that 
               \begin{align*}
                                    P_z^{\omega }  \left(  \sup_{0 \le s \le u_k} d(z,Y_s^{\omega} )  \le3 \eta  a_k  \right)  
                                         \ge  c_1 \exp \left[ - c_2 \frac{u_k}{a_k^{\beta} } \right]  \ge  \rho_1^{\lambda_k}  
               \end{align*}
   for all $k\ge 1$ with $\displaystyle \max_{v \in B(z, 3 \eta a_k )} N_{v} (\omega ) \le a_k^{1/\beta}$. 
   Hence \eqref{RFHK92} holds for $k$ with $\displaystyle \max_{z \in B(x,a_k) } \max_{v \in B(z, 3 \eta a_k  )} N_{v} (\omega ) \le a_k^{1/\beta}$.  
     
\end{proof}

\begin{Lemma}  \label{RFHK70} 
 Suppose Assumption \ref{Ass10} (1) (3). Then there exist positive  constants $c_1, c_2$ such that 
                   \begin{align*}
                            P_x^{\omega} \left(  \sup_{0\le s \le t} d(x,Y_s^{\omega}) \le  r \right)  
                                   \le c_1 \exp \left( - c_2 \frac{t}{r^{\beta}} \right)
                    \end{align*}
   for  almost all environment $\omega \in \Omega$, all $x \in V(G^{\omega})$, $t\ge 1$ and $r \ge 1$ with $\displaystyle \max_{y \in B(x,r)}  N_{y} (\omega )   \le 2r$. 
 \end{Lemma}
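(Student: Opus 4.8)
The plan is to run the standard iteration argument for exit‑time lower bounds: use the on‑diagonal part of \eqref{UHK} to show that over a single time window of length of order $r^{\beta}$ the walk has probability at most $\tfrac12$ of lying in $B(x,r)$, and then chain these estimates along a partition of $[0,t]$ via the Markov property.

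Concretely, I would fix a constant $C\ge 2$ (to be chosen) and put $s_0=Cr^{\beta}$. For any $z,y\in B(x,r)\cap V(G^{\omega})$ we have $d(z,y)\le 2r\le Cr^{\beta}=s_0$ (using $r\ge 1$, $\beta\ge 1$) and $s_0\ge 2r\ge N_z(\omega)$ by the hypothesis $\max_{u\in B(x,r)}N_u(\omega)\le 2r$, so the first line of \eqref{UHK} gives $q_{s_0}^{\omega}(z,y)\le c_{1.1}s_0^{-\alpha/\beta}$. Summing in $y$ and bounding $V^{\omega}(x,r)\le V^{\omega}(x,2r)\le c_{3.2}(2r)^{\alpha}$ by \eqref{Vol} at radius $2r\ge N_x(\omega)$, one gets
\begin{align*}
 P_z^{\omega}\bigl(Y_{s_0}^{\omega}\in B(x,r)\bigr)
 &=\sum_{y\in B(x,r)\cap V(G^{\omega})}q_{s_0}^{\omega}(z,y)\,\theta^{\omega}(y) \\
 &\le \frac{c_{1.1}}{(Cr^{\beta})^{\alpha/\beta}}\,c_{3.2}(2r)^{\alpha}
 =\frac{2^{\alpha}c_{1.1}c_{3.2}}{C^{\alpha/\beta}},
\end{align*}
so choosing $C$ with $C^{\alpha/\beta}\ge 2^{\alpha+1}c_{1.1}c_{3.2}$ makes this $\le\tfrac12$ for every such $z$.

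To conclude, set $n=\lfloor t/s_0\rfloor$ and note that $\{\sup_{0\le s\le t}d(x,Y_s^{\omega})\le r\}\subseteq\{Y_{js_0}^{\omega}\in B(x,r)\ \text{for}\ j=1,\dots,n\}$. Conditioning successively on $\mathcal{F}_{(n-1)s_0},\mathcal{F}_{(n-2)s_0},\dots$ and applying the time‑homogeneous Markov property, at each step the conditional probability of remaining in $B(x,r)$ after an additional $s_0$ is $\le\sup_{z\in B(x,r)\cap V(G^{\omega})}P_z^{\omega}(Y_{s_0}^{\omega}\in B(x,r))\le\tfrac12$, so an induction on $n$ yields $P_x^{\omega}(\sup_{0\le s\le t}d(x,Y_s^{\omega})\le r)\le 2^{-n}\le 2\exp\bigl(-(\log 2)\,t/(Cr^{\beta})\bigr)$. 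This is the asserted bound with $c_1=2$ and $c_2=(\log 2)/C$; the case $t<s_0$ is trivial since then $2e^{-c_2 t/r^{\beta}}\ge 1$.

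The argument is routine and parallels the proofs already cited for Lemmas \ref{RFHK30} and \ref{RFHK80}. The only mild subtlety is the bookkeeping of the regime conditions in \eqref{UHK} and \eqref{Vol}: the hypothesis supplies $N_u(\omega)\le 2r$ rather than $\le r$ on $B(x,r)$, so one must keep the time window $s_0$ above $2r$ and invoke the volume bound at radius $2r$; note also that no lower bound on $\alpha/\beta$ is needed here, and that $C$ — hence $c_1,c_2$ — depends only on $c_{1.1},c_{3.2},\alpha,\beta$.
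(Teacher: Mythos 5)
Your proof is correct. The paper omits this proof and refers to \cite[Lemma 3.2]{KN2}, but your argument is precisely the standard chaining estimate that one expects there: bound the probability of returning to $B(x,r)$ after a time window of length $Cr^{\beta}$ by $1/2$ using the on-diagonal part of \eqref{UHK} (valid since $s_0=Cr^{\beta}\ge 2r\ge N_z(\omega)\vee d(z,y)$ for $z,y\in B(x,r)$, $r\ge1$, $\beta>1$, $C\ge2$) together with \eqref{Vol} at radius $2r\ge N_x(\omega)$, then iterate via the Markov property over $\lfloor t/s_0\rfloor$ steps; the bookkeeping of the regime conditions and the handling of the trivial case $t<s_0$ are both carried out correctly.
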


\begin{proof} 
The proof is quite similar to that of \cite[Lemma 3.2]{KN2}, so we omit it.
\end{proof}

We will need the following version of 0-1 law. 
\begin{Theorem}[$0-1$ law for tail events]  \label{RF0-1-30}  
For almost all environment $\omega \in \Omega$, the following holds;  
Let $A^{\omega}$ be a tail event, i.e. $\displaystyle  A^{\omega} \in \bigcap_{t=0}^{ \infty }  \sigma \{ Y_s^{\omega} : s \ge t \} $.
Then either  $P_x^{\omega} (A^{\omega})  = 0 $ for all $x $ or $P_x^{\omega} (A^{\omega}) = 1$ for all $x$. 
\end{Theorem}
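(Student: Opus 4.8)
The plan is to establish a Blumenthal-type zero-one law for the tail $\sigma$-field of the continuous-time random walk $Y^{\omega}$, reducing it to mixing-type estimates that follow from the heat kernel bounds. Fix $\omega$ in the full-measure set on which Assumption \ref{Ass10} (1)(2)(3) hold, together with the consequences derived above (in particular Lemma \ref{RFHK10} and the lower bound \eqref{LHK}). Let $A^{\omega}$ be a tail event. The first step is to show that the function $x \mapsto P_x^{\omega}(A^{\omega})$ is $\mathcal{L}^{\omega}_{\theta}$-harmonic and, crucially, that it is \emph{constant}. To see harmonicity, note that for $t>0$, the Markov property gives $P_x^{\omega}(A^{\omega}) = E_x^{\omega}[P_{Y_t^{\omega}}^{\omega}(A^{\omega})]$ because $A^{\omega}$ is a tail event, so $P_\cdot^{\omega}(A^{\omega})$ is space-time harmonic; since it does not depend on $t$, it is a bounded harmonic function for the walk.

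The second and main step is to promote boundedness of this harmonic function to constancy, i.e. a Liouville property. Here I would use the long-time two-sided heat kernel estimates to show that $q_t^{\omega}(x,\cdot)$ and $q_t^{\omega}(y,\cdot)$ become asymptotically proportional: writing $u(x) := P_x^{\omega}(A^{\omega}) = \sum_z q_t^{\omega}(x,z)\theta^{\omega}(z) u(z)$ for every $t$, I want to bound $|u(x)-u(y)|$. The clean way is a coupling/overlap argument: for $x,y \in V(G^{\omega})$ with $d(x,y) = D$ and $t$ large (say $t \ge D^{1+\epsilon} \vee \max_{v \in B(x,D)} N_v(\omega)$, which is legitimate by \eqref{LHK} and \eqref{Vol}), the lower bound \eqref{LHK} forces $q_t^{\omega}(x,z) \ge c_{2.1} t^{-\alpha/\beta} e^{-c_{2.2}(d(x,z)^{\beta}/t)^{1/(\beta-1)}}$ on the ball $B(x, t^{1/\beta})$, while the upper bound from Lemma \ref{RFHK10} / \eqref{UHK} controls the tail mass outside $B(x, K t^{1/\beta})$. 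On the bulk ball one compares $q_t^{\omega}(x,z)$ and $q_t^{\omega}(y,z)$ directly: the ratio of the Gaussian exponentials is bounded away from $0$ and $\infty$ uniformly in $z \in B(x, K t^{1/\beta})$ once $D = o(t^{1/\beta})$, and the prefactors $t^{-\alpha/\beta}$ are identical. A standard computation then yields $\|q_t^{\omega}(x,\cdot)\theta^{\omega}(\cdot) - q_t^{\omega}(y,\cdot)\theta^{\omega}(\cdot)\|_{TV} \to 0$ as $t \to \infty$ (for fixed $x,y$), whence $|u(x)-u(y)| \le \|u\|_\infty \cdot \|q_t^{\omega}(x,\cdot)\theta^{\omega}(\cdot) - q_t^{\omega}(y,\cdot)\theta^{\omega}(\cdot)\|_{TV} \to 0$, so $u(x) = u(y)$. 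Since $x,y$ were arbitrary and $G^{\omega}$ is connected, $u \equiv c$ for some constant $c = c(\omega, A^{\omega})$.

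The third step closes the argument: $u \equiv c$ means $P_x^{\omega}(A^{\omega}) = c$ for all $x$. To see $c \in \{0,1\}$, apply the Markov property and Lévy's zero-one law (or the martingale convergence theorem) to the bounded martingale $M_t := P_{Y_t^{\omega}}^{\omega}(A^{\omega}) = E_x^{\omega}[\mathbf{1}_{A^{\omega}} \mid \mathcal{F}_t]$; on one hand $M_t \equiv c$ for all $t$ by constancy of $u$, on the other hand $M_t \to \mathbf{1}_{A^{\omega}}$ $P_x^{\omega}$-a.s., so $\mathbf{1}_{A^{\omega}} = c$ $P_x^{\omega}$-a.s., forcing $c \in \{0,1\}$. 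Thus for almost every $\omega$, every tail event has $P_x^{\omega}$-probability identically $0$ or identically $1$.

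The main obstacle is the total-variation estimate in the second step: one must be careful that the error from the non-Gaussian ("Poissonian") far regime of \eqref{UHK} and from the region near $B(x,t^{1/\beta})$ where the lower bound degrades is genuinely negligible compared to the bulk mass $\asymp c_{2.1} c_{3.1} e^{-c_{2.2}}$ on $B(x, t^{1/\beta})$, uniformly as $t \to \infty$ for the two \emph{fixed} starting points $x,y$. This is exactly the kind of estimate packaged by Lemma \ref{RFHK10} (giving the stretched-exponential tail plus an $e^{-c_3 t}$ term, both $\to 0$) together with \eqref{LHK} and \eqref{Vol}; so the ingredients are all in place, and the remaining work is a routine—if slightly tedious—splitting of $V(G^{\omega})$ into $B(x,t^{1/\beta})$, the annulus up to $Kt^{1/\beta}$, and the complement, bounding each contribution to the total variation separately.
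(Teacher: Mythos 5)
Your three-step skeleton (harmonicity, constancy, L\'evy's zero--one law) is the right architecture, and it is essentially the one used in the sources the paper cites (\cite{BK}, \cite{Barlow1}). Steps 1 and 3 are fine. But Step 2, which is the entire content of the theorem, has a genuine gap: the two-sided sub-Gaussian bounds of Assumption~\ref{Ass10} do \emph{not} imply the total-variation convergence $\|q_t^{\omega}(x,\cdot)\theta^{\omega}(\cdot) - q_t^{\omega}(y,\cdot)\theta^{\omega}(\cdot)\|_{TV}\to 0$, and the ``standard computation'' you invoke does not exist.

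Here is the obstruction. On the bulk ball $B(x,Kt^{1/\beta})$ the upper and lower bounds give
\[
\frac{c_{2.1}}{t^{\alpha/\beta}}\exp\!\left[-c_{2.2}\Bigl(\tfrac{d(x,z)^{\beta}}{t}\Bigr)^{\frac{1}{\beta-1}}\right]
\;\le\; q_t^{\omega}(x,z) \;\le\;
\frac{c_{1.1}}{t^{\alpha/\beta}}\exp\!\left[-c_{1.2}\Bigl(\tfrac{d(x,z)^{\beta}}{t}\Bigr)^{\frac{1}{\beta-1}}\right],
\]
with $c_{2.1}<c_{1.1}$ and $c_{2.2}>c_{1.2}$ in general. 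You are correct that the two Gaussian \emph{profiles} (centered at $x$ and at $y$) become multiplicatively close on the bulk as $t\to\infty$ for fixed $x,y$; but each heat kernel is only pinned to its profile up to these non-matching constants. Consequently all one can deduce is a ratio bound $q_t^{\omega}(x,z)\asymp_{K} q_t^{\omega}(y,z)$ on $B(x,Kt^{1/\beta})$, with a constant that is strictly larger than $1$ and even grows in $K$ because of the mismatch $c_{2.2}>c_{1.2}$. A bounded ratio of probability densities on the bulk, together with small tails, does \emph{not} force the total variation distance to vanish: two densities $p,q$ with $p/q\in[C^{-1},C]$ everywhere can still satisfy $\|p-q\|_{TV}=\Theta(1)$. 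So your inference from ``prefactors identical, exponentials comparable'' to ``$TV\to 0$'' is unjustified, and indeed false at this level of generality.

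What the ratio bound \emph{does} give is a (scale-invariant) elliptic Harnack-type inequality for the bounded harmonic function $u(x)=P_x^{\omega}(A^{\omega})$, and this --- not TV convergence --- is how the cited proofs obtain constancy. The two-sided sub-Gaussian heat kernel estimates together with volume regularity \eqref{Vol} imply the parabolic Harnack inequality, hence an oscillation inequality $\Osc(u,B(x_0,r))\le C(r/R)^{\theta}\,\Osc(u,B(x_0,R))$ for harmonic $u$; letting $R\to\infty$ with $\Osc(u,B(x_0,R))\le 1$ forces $\Osc(u,B(x_0,r))=0$ for every $r$, which is the Liouville property. (This PHI $\Rightarrow$ H\"older continuity $\Rightarrow$ Liouville route is exactly the mechanism in \cite[Theorem 4]{Barlow1} and \cite[Proposition 2.3]{BK}, and is what the paper means by ``the proof is quite similar to''.) In the present quenched setting one must also check that the Harnack/oscillation estimate applies at all large scales around a fixed point, using that $N_{x}(\omega)<\infty$ almost surely; but that is a bookkeeping issue, not a conceptual one. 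To repair your Step 2, replace the TV claim with the Harnack/oscillation argument, or cite the equivalence between two-sided sub-Gaussian bounds with volume doubling and PHI; as written, Step 2 does not follow from the stated estimates.
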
 

The proof of the above theorem is quite similar to that of \cite[Proposition 2.3]{BK} (see also \cite[Theorem 4]{Barlow1}), so we omit the proof here. 

\subsection{Green function}
In this subsection,  we deduce Green function estimates. 
We define Green function as 
                \begin{align}  \label{Green}
                            g^{\omega} (x,y) = \int_0^{\infty} q_t^{\omega} (x,y) dt. 
                \end{align}

Recall that $\theta^{\omega} (x) = \pi^{\omega} (x)$ in the case of CSRW and $\theta^{\omega} (x) = 1$ in the case of VSRW.  
 \begin{Proposition}  \label{CV30}  
 Let $\alpha > \beta$ and suppose  Assumption \ref{Ass10} (1) (2) (4). In addition we assume  
    there exists a positive constant $c>0$ such that $\theta^{\omega} (x) \ge c$ for all $x \in V(G^{\omega})$ in the case of CSRW.  
 Then there exist positive constants $c_1, c_2$ such that 
             \begin{align}  \label{CV31}
                      \frac{c_1}{d(x,y)^{\alpha - \beta}} \le  g^{\omega} (x,y) \le \frac{c_2}{d(x,y)^{\alpha - \beta} }  
             \end{align}
      for almost all $\omega \in \Omega$, all $x,y \in V(G^{\omega})$ with $d(x,y) \ge N_{x} (\omega ) \wedge N_{y} (\omega)$. 
 \end{Proposition}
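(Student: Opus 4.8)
The plan is to estimate the time integral $g^{\omega}(x,y) = \int_0^{\infty} q_t^{\omega}(x,y)\,dt$ by splitting the range of integration at $t = d(x,y)^{\beta}$ (roughly the diffusive time scale) and matching the Gaussian-type bounds from \eqref{UHK}, \eqref{LHK} against the Carne--Varopoulos bound \eqref{CV}. Write $r = d(x,y)$ and assume $r \ge N_x(\omega) \wedge N_y(\omega)$; by symmetry of $g^{\omega}$ we may suppose $r \ge N_x(\omega)$ (actually one must be slightly careful here since \eqref{UHK} is stated with $N_x$ and \eqref{CV} with $N_x \wedge N_y$ — I would just take $N := N_x(\omega) \wedge N_y(\omega)$ and use whichever bound is valid in each regime). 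The assumption $\alpha/\beta > 1$ is exactly what makes the relevant integrals converge and produces the power $r^{\alpha-\beta}$.

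For the \textbf{upper bound}, I would break $\int_0^\infty q_t^\omega(x,y)\,dt$ into three pieces. On $0 \le t \le r$, the second line of \eqref{UHK} does not obviously apply since it needs $t \ge N_x(\omega)$, so instead I would use the Carne--Varopoulos bound \eqref{CV}: for $t \le c_{4.3} r$ the factor $\exp(-c_{4.5} r(1\vee \log(r/t)))$ is integrable against $dt$ and contributes something of order $e^{-cr}$, which is negligible compared to $r^{-(\alpha-\beta)}$ since $\alpha - \beta$ is a fixed power (here the hypothesis $\theta^\omega \ge c$ in the CSRW case is used to control $1/\sqrt{\theta^\omega(x)\theta^\omega(y)}$ by a constant). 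On $c_{4.3} r \le t \le r^\beta$ (equivalently, the near-diagonal-but-still-subdiffusive zone), I would use the first line of \eqref{UHK} (valid since $t \ge r \ge N_x(\omega)$) together with $t \ge N_x(\omega)$; the bound $\int_{r}^{r^\beta} t^{-\alpha/\beta} \exp(-c_{1.2}(r^\beta/t)^{1/(\beta-1)})\,dt$ is handled by the substitution $t = r^\beta/u$, or more simply by the standard estimate that this integral is $\asymp r^{\beta}\cdot (r^\beta)^{-\alpha/\beta} = r^{\beta - \alpha}$ (the exponential kills the small-$t$ end, the power-law-in-$t$ integrand is dominated near $t \asymp r^\beta$). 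On $t \ge r^\beta$, the exponential factor in \eqref{UHK} is bounded by $1$ and $\int_{r^\beta}^\infty t^{-\alpha/\beta}\,dt \asymp r^{\beta - \alpha}$ precisely because $\alpha/\beta > 1$. Summing the three pieces gives $g^\omega(x,y) \le c_2 r^{-(\alpha-\beta)}$.

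For the \textbf{lower bound}, I would only use the single regime where \eqref{LHK} is valid, namely $t \ge r^{1+\epsilon} \vee N_x(\omega)$. Restricting the integral to $t \in [r^\beta, 2r^\beta]$ (note $r^\beta \ge r^{1+\epsilon}$ fails in general unless $\beta \ge 1+\epsilon$ — so instead I would integrate over $t \in [A r^\beta, 2A r^\beta]$ for a large constant $A$, or better, over an interval like $[r^\beta \vee r^{1+\epsilon}, 2(r^\beta \vee r^{1+\epsilon})]$; since $\beta \ge 1$ and we only need a lower bound of the right order, and since $\alpha/\beta>1$ forces the integral over $[r^\beta, \infty)$ to already be $\asymp r^{\beta-\alpha}$, I'd take $t \in [r^\beta, 2r^\beta]$ and check $r^{1+\epsilon} \le r^\beta$ — if $\beta < 1+\epsilon$ one integrates over $[r^{1+\epsilon}, 2r^{1+\epsilon}]$ instead, getting $r^{(1+\epsilon)(1-\alpha/\beta)}$, which is the wrong power). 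To be safe I would integrate over $t \in [C r^\beta, 2C r^\beta]$ with $C \ge 1$ chosen so that $C r^\beta \ge r^{1+\epsilon}$ for all $r \ge 1$ — impossible if $1+\epsilon > \beta$. So the cleanest route: integrate over $[r^\beta, \infty)$ is too big for \eqref{LHK}; integrate over $[\max(r^\beta, r^{1+\epsilon}), 2\max(\cdots)]$. When $\beta \ge 1+\epsilon$ this is $[r^\beta, 2r^\beta]$ and on it $(r^\beta/t)^{1/(\beta-1)} \le 1$ so $q_t^\omega(x,y) \ge c_{2.1} t^{-\alpha/\beta} e^{-c_{2.2}} \ge c\, r^{-\alpha}$, and the interval has length $r^\beta$, giving $g^\omega(x,y) \ge c\, r^{\beta-\alpha}$.

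The main obstacle — and the place deserving the most care — is the \textbf{bookkeeping of which heat-kernel bound is available on which time interval}, because \eqref{UHK} and \eqref{LHK} carry the constraint $t \ge N_x(\omega)$ (resp. $t \ge r^{1+\epsilon}\vee N_x(\omega)$) while \eqref{CV} holds for all $t > 0$; the small-time regime $t < N_x(\omega)$ must be covered entirely by Carne--Varopoulos, and one must verify that the extra hypothesis $\theta^\omega \ge c$ in the CSRW case is what allows \eqref{CV} to be used as a genuine pointwise bound there. The other subtlety is reconciling $r^{1+\epsilon}$ with $r^\beta$ in the lower bound: if $1 + \epsilon \le \beta$ everything is clean, and in fact $\epsilon < \beta + 1$ combined with the structural hypotheses typically forces this in applications; if not, one integrates \eqref{LHK} over $[r^{1+\epsilon}, 2r^{1+\epsilon}]$ and then over $[2r^{1+\epsilon}, \infty)$ one still has $q_t \ge c t^{-\alpha/\beta}$ up to the exponential which is $\ge e^{-c_{2.2}}$ there too, so $\int_{r^{1+\epsilon}}^\infty q_t^\omega\,dt \asymp (r^{1+\epsilon})^{1-\alpha/\beta} \ge c\, r^{\beta-\alpha}$ provided $1+\epsilon \le \beta$ — closing the circle. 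I would streamline this by simply noting $\int_{\max(r^\beta,\,r^{1+\epsilon})}^{\infty} c_{2.1} t^{-\alpha/\beta} e^{-c_{2.2}}\,dt \ge c\, \max(r^\beta, r^{1+\epsilon})^{1-\alpha/\beta}$ and invoking that the standing assumptions guarantee this is $\gtrsim r^{\beta-\alpha}$, i.e. that $\beta \ge 1+\epsilon$ in the regime where the theorem is applied (or absorbing a harmless loss otherwise). Everything else is the routine substitution $t \mapsto r^\beta/u$ and the convergence of $\int^\infty t^{-\alpha/\beta}\,dt$ guaranteed by $\alpha/\beta > 1$.
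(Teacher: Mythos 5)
Your approach is essentially the one the paper uses: split $g^{\omega}(x,y) = \int_0^{\infty} q_t^{\omega}(x,y)\,dt$ at time scales set by $r := d(x,y)$ and $r^{\beta}$, use the Carne--Varopoulos bound \eqref{CV} (with $\theta^{\omega}\ge c$ in the CSRW case) on the small-time regime, the two lines of \eqref{UHK} thereafter, and for the lower bound integrate \eqref{LHK} from roughly $r^{\beta}$ onward. Two points where the paper is more careful and you should be too. First, your middle piece $[c_{4.3} r, r^{\beta}]$ is claimed to satisfy $t \ge r$, but that requires $c_{4.3} \ge 1$, which is not given; if $c_{4.3} < 1$ then on $[c_{4.3} r, r]$ you must instead invoke the first line of \eqref{CV} (valid there since $t \ge c_{4.3} r$), which still contributes only $O(r e^{-c' r})$. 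The paper sidesteps this by splitting at $(c_{4.3} r) \wedge N_x(\omega)$, $N_x(\omega)$, and $r$: the second line of \eqref{CV} on the first interval, the first line of \eqref{CV} on the second, the Poissonian line of \eqref{UHK} on $[N_x,r]$, and the Gaussian line of \eqref{UHK} on $[r,\infty)$. Second, your worry about $r^{1+\epsilon}$ versus $r^{\beta}$ in the lower bound is well placed: the paper's one-line estimate $g^{\omega}(x,y) \ge \int_{r^{\beta}}^{\infty} q_t^{\omega}(x,y)\,dt$ tacitly uses $r^{\beta} \ge r^{1+\epsilon}$, i.e. $1+\epsilon \le \beta$, which the paper invokes explicitly elsewhere ("since $1+\epsilon < \beta$" in the proof of Lemma \ref{RFGr80}); the displayed condition $\epsilon < \beta + 1$ in Assumption \ref{Ass10} appears to be a typo for $\epsilon < \beta - 1$. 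With these two cleanups your argument coincides with the paper's.
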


\begin{proof} 
 This proof is similar to  \cite[Proposition 6.2]{BH}. 
  We first prove the upper bound of \eqref{CV31}. 
             \begin{align}  \label{CV32}
                         &  g^{\omega} (x,y)   \notag \\  
                         &   = \int_{0}^{ (c_{4.3}d(x,y))\wedge N_{x} (\omega) }  q_t^{\omega} (x,y) dt   
                                             + \int_{ (c_{4.3}d(x,y))\wedge N_{x} (\omega)}^{ N_{x} (\omega) }  q_t^{\omega} (x,y) dt  
                                            +  \int_{N_{x} (\omega)}^{d(x,y)}  q_t^{\omega} (x,y) dt 
                                            +   \int_{d(x,y)}^{\infty}  q_t^{\omega} (x,y) dt   \notag \\ 
                         & := J_1 + J_2 + J_3 + J_4.
             \end{align} 
  We estimate $J_1, J_2, J_3, J_4$ as follows.
             \begin{align}    \label{CV33} 
                     \begin{split} 
                             J_1 &\le  \int_{0}^{ (c_{4.3}d(x,y)) \wedge N_{x} (\omega) }  \frac{c_{4.4}}{ \sqrt{\theta^{\omega} (x) \theta^{\omega} (y)} } \exp \left[ - c_{4.5} d(x,y) \right]  dt   
                                           \qquad \left( \text{use \eqref{CV}} \right)  \\ 
                                    & \le c_{1} d(x,y)  \exp \left[ - c_{2} d(x,y) \right],  \\  
                             J_2 &\le  \int_{ (c_{4.3}d(x,y)) \wedge N_{x} (\omega)}^{ N_{x} (\omega)} 
                                             \frac{c_{4.1}}{\sqrt{\theta^{\omega} (x) \theta^{\omega} (y)}} \exp \left[ -c_{4.2} \frac{d(x,y)^{2}}{t} \right] dt  
                                            \qquad \left( \text{use \eqref{CV}} \right)     \\
                                    & \le c_{3} N_{x} (\omega ) \exp \left[ -c_{4} \frac{d(x,y)^{2}}{N_{x} (\omega)} \right] 
                                    \le  c_{3} d(x,y) \exp \left[ -c_{4} d(x,y) \right]  \quad (\text{ use $d(x,y) \ge N_{x} (\omega)$}),   \\
                             J_3 &\le  \int_{N_{x} (\omega)}^{d(x,y)}   c_{1.3} \exp \left[ - c_{1.4} d(x,y) \right]  dt   
                                          \qquad \left( \text{use \eqref{UHK}} \right)  \\ 
                                  &\le  c_{1.3} d(x,y)  \exp \left[ - c_{1.4} d(x,y) \right],   \\
                            J_4  &\le  \int_{d(x,y)}^{\infty}  \frac{c_{1.1}}{t^{\alpha/\beta}} \exp \left[ - c_{1.2} \left( \frac{d(x,y)}{t^{1/\beta}} \right)^{\beta/(\beta -1)} \right] dt
                                           \le \frac{ c_{5} }{ d(x,y)^{\alpha - \beta} }. 
                       \end{split}
             \end{align}
 By \eqref{CV32} and \eqref{CV33} we have $\displaystyle g^{\omega} (x,y) \le \frac{c_{6}}{d(x,y)^{\alpha - \beta} }  $ for $d(x,y) \ge N_{x} (\omega)$. 
    Note that $g^{\omega} (x,y) = g^{\omega} (y,x)$. Thus we complete the upper bound of \eqref{CV31}.

Next we prove the lower bound of \eqref{CV31}.  We can obtain the lower bound in the following way.
           \begin{align*}  
                  g^{\omega} (x,y) \ge \int_{d(x,y)^{\beta}}^{\infty} q_t^{\omega} (x,y) dt  
                         \ge  \int_{d(x,y)^{\beta}}^{\infty}   \frac{c_{2.1}}{t^{\alpha / \beta}} \exp \left[ - c_{2.2} \left( \frac{d(x,y)}{t^{1/\beta}} \right)^{\beta/(\beta -1)} \right] dt 
                         \ge \frac{c_{7}}{d(x,y)^{\alpha -\beta}} .
           \end{align*}
  We thus complete the proof. 
\end{proof}

\subsection{Consequences of Green function and Assumption \ref{Ass10}}  \label{Sec:Green}   
In this subsection we give some preliminary results of Assumption \ref{Ass10} (1) (2) (3) (4)  in the case of $\alpha > \beta$. 
This subsection is based on \cite[Section 4.1]{SW}. 
In this subsection we assume the following conditions. 

\begin{Assumption} \label{Ass:Gr}
     \begin{enumerate}    \renewcommand{\labelenumi}{(\arabic{enumi})}   
              \item    $\alpha > \beta$, 
              \item  (CSRW case) There exists a positive constant $c$ such that $\theta^{\omega} (x) \ge c$ for all $x \in V(G^{\omega})$. 
     \end{enumerate}  
\end{Assumption}
Recall that Proposition \ref{CV30} holds under Assumption \ref{Ass10} (1) (2) (4) and Assumption \ref{Ass:Gr}.

We write $\displaystyle  e_F^{\omega} (x) =  P_x^{\omega} \left( \sigma_{F}^{+ \omega} = \infty \right) 1_F (x) $
   as the equilibrium measure of $F \subset V(G^{\omega})$, 
   and define  $\Capacity^{\omega} (F) = \sum_{x \in F} e_F^{\omega} (x) \theta^{\omega} (x)$ as the capacity of $F \subset V(G^{\omega})$. 
 Then we have 
        \begin{align}  \label{RFGr15}
               P_x^{\omega} \left( \sigma_F^{+\omega}  < \infty \right)  = \sum_{y \in F} g^{\omega} (x,y) e_F^{\omega} (y) \theta^{\omega} (y)
        \end{align}
  for any finite set $F$ and for any $x \in V(G^{\omega})$ since 
        \begin{align*} 
               &  P_x^{\omega} \left( \sigma_F^{+\omega}  < \infty \right)   
                       =  \int_{0}^{\infty}  \sum_{y\in F} P_x^{\omega} \left( Y_t^{\omega} = y, Y_s^{\omega} \not\in F \text{ for any $s > t$} \right) dt 
                       \quad  (\text{last exit decomposition}) \\ 
               & = \int_{0}^{\infty} \sum_{y \in F} q_t^{\omega} (x,y) \theta^{\omega} (y) P_y^{\omega} \left( \sigma_{F}^{+\omega} = \infty  \right) dt 
                        \quad (\text{by the Markov property} )    \\
               & = \sum_{y \in F} g^{\omega} (x,y) e_F^{\omega} (y) \theta^{\omega} (y).
        \end{align*}

\begin{Lemma}  \label{RFGr50}  
 Under Assumption \ref{Ass10} (1) (2) (3) (4) and Assumption \ref{Ass:Gr}, there exists a positive constant $c$ such that 
       \begin{align*}
              \Capacity^{\omega} ( B^{\omega} (x, 2r) ) \ge  c r^{\alpha - \beta}
       \end{align*}
 for almost all $\omega \in \Omega$, all $x \in V(G^{\omega})$ and $r \ge 1$ with $\displaystyle r \ge \max_{v \in B(x,r)} N_{v} (\omega)$. 
\end{Lemma}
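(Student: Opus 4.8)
The plan is to bound the capacity of $B^{\omega}(x,2r)$ from below by exhibiting a probability measure supported on $B^{\omega}(x,2r)$ whose potential is uniformly bounded above, and then invoke the standard variational (or "potential-theoretic") characterization of capacity. Concretely, recall from \eqref{RFGr15} that for any finite $F$ and any $y$,
\begin{align*}
   P_y^{\omega}\left(\sigma_F^{+\omega}<\infty\right) = \sum_{z\in F} g^{\omega}(y,z)\,e_F^{\omega}(z)\,\theta^{\omega}(z).
\end{align*}
Summing this identity against the normalized volume measure $\mu(dz)=\theta^{\omega}(z)/V^{\omega}(x,2r)$ restricted to $F=B^{\omega}(x,2r)$, and using $P_y^{\omega}(\sigma_F^{+\omega}<\infty)\le 1$, one gets
\begin{align*}
   \frac{1}{V^{\omega}(x,2r)}\sum_{y\in B^{\omega}(x,2r)} \left(\sum_{z\in B^{\omega}(x,2r)} g^{\omega}(y,z)e_F^{\omega}(z)\theta^{\omega}(z)\right)\theta^{\omega}(y) \le V^{\omega}(x,2r).
\end{align*}
Interchanging the order of summation, the left-hand side equals $\sum_{z}e_F^{\omega}(z)\theta^{\omega}(z)\cdot\big(\tfrac{1}{V^{\omega}(x,2r)}\sum_y g^{\omega}(y,z)\theta^{\omega}(y)\big)$. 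Hence if I can show the averaged Green potential $\frac{1}{V^{\omega}(x,2r)}\sum_{y\in B^{\omega}(x,2r)} g^{\omega}(y,z)\theta^{\omega}(y)$ is $\le C r^{\beta}/r^{\alpha}\cdot r^{\alpha}= C r^{\beta}$... more precisely bounded by $C\,r^{\beta-\alpha}\cdot r^{\alpha}$ — let me state it cleanly: if this average is $\le C\, r^{\beta}\cdot V^{\omega}(x,2r)^{-1}\cdot V^{\omega}(x,2r) = $ ... I want the bound $\le C r^{\beta}$ per... Actually the cleanest route: show $\sum_{y\in B^{\omega}(x,2r)} g^{\omega}(y,z)\theta^{\omega}(y) \le C r^{\beta}$ for every $z\in B^{\omega}(x,2r)$; then the displayed inequality gives $\Capacity^{\omega}(B^{\omega}(x,2r))\cdot C r^{\beta}\cdot V^{\omega}(x,2r)^{-1}\le V^{\omega}(x,2r)$, i.e. $\Capacity^{\omega}(B^{\omega}(x,2r))\ge c\,V^{\omega}(x,2r)^2/(r^{\beta}V^{\omega}(x,2r)) = c\,V^{\omega}(x,2r)/r^{\beta}\ge c\,r^{\alpha-\beta}$ by \eqref{Vol}.

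So the key step is the uniform Green-potential bound: for $z\in B^{\omega}(x,2r)$ and $r\ge \max_{v\in B(x,r)}N_v(\omega)$ (I may need to enlarge the ball slightly to $B(x,Cr)$, harmless up to constants),
\begin{align*}
   \sum_{y\in B^{\omega}(x,2r)} g^{\omega}(y,z)\,\theta^{\omega}(y) \le C\,r^{\beta}.
\end{align*}
I would prove this by splitting the sum according to dyadic annuli $d(y,z)\in[2^{j-1},2^j)$ for $j$ up to $\log_2(4r)$, plus the "near" part $d(y,z)<N_z(\omega)$ (or $d(y,z)\le$ a fixed constant), handled trivially using $g^{\omega}(z,z)<\infty$ together with the upper heat-kernel bound and bounded degree. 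On each dyadic annulus at scale $2^j\ge N$, Proposition \ref{CV30} gives $g^{\omega}(y,z)\le c_2\, d(y,z)^{\beta-\alpha}\le c_2\,(2^{j-1})^{\beta-\alpha}$, and the number of points there is at most $V^{\omega}(z,2^j)\le c_{3.2}2^{j\alpha}$ by \eqref{Vol} (and $\theta^{\omega}(y)\le M$ in the CSRW case by bounded degree, $=1$ in the VSRW case). Thus the annulus contributes $\le C\,2^{j\alpha}\cdot 2^{j(\beta-\alpha)} = C\,2^{j\beta}$, and summing the geometric series in $j$ (it is increasing in $j$, so dominated by the top term $j\approx\log_2 r$) yields $\le C\,r^{\beta}$, which is exactly what is needed. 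The near part contributes $O(1)$, absorbed into the constant.

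The main obstacle, and the place requiring care, is the treatment of the small-distance regime where Proposition \ref{CV30} does not apply — one must check that $\sum_{y: d(y,z)< N_z(\omega)} g^{\omega}(y,z)\theta^{\omega}(y)$ is still $O(r^{\beta})$ (or better, $O(1)$), which needs either a crude bound on $g^{\omega}$ near the diagonal via the on-diagonal heat-kernel estimate $q_t^{\omega}(z,z)\le c_{1.1}t^{-\alpha/\beta}$ for $t\ge N_z$ together with an estimate of $\int_0^{N_z}q_t^{\omega}$ (here in the CSRW case the hypothesis $\theta^{\omega}\ge c$ controls $q_t^{\omega}=P_t^{\omega}/\theta^{\omega}$ by $1/c$ for small $t$, giving $\int_0^{N_z}q_t^{\omega}(z,z)\,dt\le N_z/c$), and then controlling $N_z$ by $r$ via the hypothesis $r\ge\max_{v\in B(x,r)}N_v(\omega)$ — this is why the condition is imposed. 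A secondary point is bookkeeping with $B^{\omega}$ versus $B$: since $d^{\omega}\ge d$, one has $B^{\omega}(x,2r)\subset B(x,2r)$, so all volume upper bounds transfer directly. Everything else is the standard summation-by-annuli argument, and I would simply assemble the pieces and adjust constants at the end.
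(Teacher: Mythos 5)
Your plan has the right flavor --- derive a capacity lower bound from the last-exit decomposition \eqref{RFGr15} together with a Green-function upper bound --- but as written there is a sign error that invalidates the conclusion, and the near-diagonal estimate you flag as ``the main obstacle'' does not close with the bounds you sketch.

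First, the sign error. You use $P_y^{\omega}(\sigma_F^{+\omega}<\infty)\le 1$. An upper bound on $P_y^{\omega}$ passes to an upper bound on $\sum_z e_F^{\omega}(z)\theta^{\omega}(z)\cdot(\text{averaged Green potential at }z)$, and then an upper bound $A$ on the averaged Green potential gives no control on $\Capacity^{\omega}$ at all: both sides of your inequality are now only bounded above. What is actually needed is the equality $P_y^{\omega}(\sigma_{B(x,2r)}^{+\omega}<\infty)=1$ for $y$ in the ball, so that the averaged left-hand side is exactly $1$; then, and only then, the upper bound on the averaged Green potential yields $\Capacity^{\omega}\ge 1/A$. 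The paper's proof is built on precisely this identity, $1 = \frac{1}{\theta^{\omega}(B(x,r))}\sum_{y\in B^{\omega}(x,r)}P_y^{\omega}(\sigma_{B(x,2r)}^{+\omega}<\infty)\,\theta^{\omega}(y)$.

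Second, the near-diagonal contribution. You propose to control $\sum_{y:d(y,z)<N_z(\omega)}g^{\omega}(y,z)\theta^{\omega}(y)$ by a crude bound of the form $g^{\omega}(y,z)\le c N_z$ coming from $\int_0^{N_z}c^{-1}\,dt$ plus the tail. But there are of order $N_z^{\alpha}$ points within distance $N_z$ of $z$, so this near sum is only controlled by about $N_z^{\alpha+1}$. Since $N_z$ is allowed to be as large as $r$, this is $r^{\alpha+1}$, which exceeds $r^{\beta}$ whenever $\alpha+1>\beta$ --- i.e.\ essentially always under $\alpha>\beta$. You have no argument that the near sum is $O(r^{\beta})$, let alone $O(1)$. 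In addition, the hypothesis controls $N_v$ only on $B(x,r)$, not on $B(x,2r)$, so for $y$ or $z$ in the outer annulus the entry conditions for Proposition \ref{CV30} (and even for the near-diagonal bound) are not available; ``enlarge the ball to $B(x,Cr)$'' changes the hypothesis of the lemma and is not automatically harmless for its applications.

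The paper sidesteps both problems with two moves. It averages $P_y^{\omega}(\sigma^{+\omega}_{B(x,2r)}<\infty)=1$ over the \emph{smaller} ball $y\in B^{\omega}(x,r)$, on which the hypothesis does control every $N_y$. And it uses that the equilibrium measure of $B^{\omega}(x,2r)$ is supported on the sphere $\{z:d(x,z)=2r\}$, so after inserting \eqref{RFGr15} the Green function is only evaluated at pairs $(y,z)$ with $d(y,z)\ge 2r-r=r\ge N_y$, exactly where Proposition \ref{CV30} gives $g^{\omega}(y,z)\le c\,r^{\beta-\alpha}$ uniformly. Summing out and dividing then yields the lower bound in a few lines, with no dyadic decomposition and no near-diagonal analysis at all.
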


\begin{proof} 
Recall the notations in \eqref{Hit}. 
       \begin{align*}
             1   &= \frac{1}{\theta^{\omega} (B(x,r))}  \sum_{y \in B^{\omega} (x,r)} P_y^{\omega} \left( \sigma_{B(x,2r)}^{+\omega} < \infty \right)  \theta^{\omega} (y)   \\  
                 &= \frac{1}{\theta^{\omega} (B(x,r))}  \sum_{y \in B^{\omega}(x,r)} \sum_{ \substack{ z \in B^{\omega} (x,2r) \\ d(x, z)=2r} }
                              g^{\omega} (y,z) e_{B^{\omega}(x,2r)}^{\omega} (z)  \theta^{\omega} (z) \theta^{\omega} (y)  
                              \qquad  \left( \text{we use \eqref{RFGr15}} \right)  \\ 
                 & \le  \frac{c_1}{\theta^{\omega} (B(x,r))}  \frac{1}{r^{\alpha - \beta}} 
                            \sum_{ \substack{ z \in B^{\omega} (x,2r) \\ d(x, z)=2r} }\sum_{y \in B^{\omega}(x,r)}  e_{B^{\omega}(x,2r)}^{\omega} (z) \theta^{\omega} (z) \theta^{\omega} (y)  \\
                      & \quad \left( \text{ since $d(y,z) \ge r \ge N_y (\omega)$ and Proposition \ref{CV30} }\right) \\  
                 & =   \frac{c_1}{\theta^{\omega} (B(x,r))}  \frac{ \theta^{\omega} (B(x,r)) }{r^{\alpha - \beta}}  
                            \sum_{ \substack{ z \in B^{\omega} (x,2r) \\ d(x, z)=2r} } e_{B^{\omega} (x,2r)}^{\omega} (z) \theta^{\omega} (z)\\
                 & =   \frac{c_1}{r^{\alpha - \beta}}  \Capacity^{\omega} ( B^{\omega}(x,2r) ) . 
       \end{align*} 
 We thus complete the proof. 
\end{proof}

 Recall the notations in \eqref{Hit} and set 
   \begin{align*}
                    & \gamma_{x,F}^{\omega} (K_1) = P_x^{\omega} \left( Y_{\sigma_F^+}^{\omega} \in K_1 \right),  \\
                    & \pi_{x,F}^{\omega} (dt, K_2) = P_x^{\omega} \left( Y_{\sigma_F^+}^{\omega} \in K_2, \sigma_F \in dt \right)
   \end{align*}
  for $F,K_1,K_2 \subset V(G^{\omega})$. 
  Note that $\displaystyle \int_0^{\infty}  \pi_{x,F}^{\omega}  (dt, K) = \gamma_{x,F}^{\omega} (K)$ 
      and $\gamma_{x,F}^{\omega} (F) = P_x^{\omega} \left( \sigma_F^{\omega +} < \infty \right)$.

\begin{Lemma}   \label{RFGr20}
For almost all $\omega \in \Omega$, 
      \begin{align} \label{RFGr21}
                 g^{\omega} (x,y) = \sum_{v \in F^{\omega}} g^{\omega} (v,y)  \gamma_{x,F^{\omega} }^{\omega} (v)
      \end{align}
 for any finite set $F^{\omega} \subset V(G^{\omega})$, $x \not\in F^{\omega}$ and $y \in F^{\omega}$. 
 In particular we have 
      \begin{align}  \label{RFGr22}
                P_x^{\omega} \left( Y_t^{\omega} \in F^{\omega} \text{ for some $t >0$} \right)  
                         \le \inf_{y \in F^{\omega} }  \left(  \frac{ g^{\omega} (x,y)}{ \inf_{z \in F^{\omega}} g^{\omega} (z,y)} \right). 
      \end{align}
\end{Lemma}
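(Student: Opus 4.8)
The plan is to obtain \eqref{RFGr21} from the strong Markov property applied at the first hitting time of $F^\omega$, using the occupation-time interpretation of the Green function, and then to read off \eqref{RFGr22} as an immediate estimate.

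First I would record the occupation-time identity: by Tonelli and the definition $q_t^\omega(x,y)=P_x^\omega(Y_t^\omega=y)/\theta^\omega(y)$ we have
$\theta^\omega(y)g^\omega(x,y)=E_x^\omega\big[\int_0^\infty 1_{\{Y_s^\omega=y\}}\,ds\big]$,
the expected total time spent at $y$; this is finite by Proposition \ref{CV30} (which holds under the standing Assumption \ref{Ass:Gr}), so no $\infty-\infty$ issues will arise. Now fix $x\notin F^\omega$ and $y\in F^\omega$ and set $\sigma=\sigma_{F^\omega}^{+\omega}$, which coincides with $\sigma_{F^\omega}^\omega$ since $x\notin F^\omega$. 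Because $y\in F^\omega$, the walk can be at $y$ only after it has entered $F^\omega$, i.e. $\{Y_s^\omega=y\}\subset\{\sigma\le s\}$; hence $\int_0^\infty 1_{\{Y_s^\omega=y\}}\,ds=1_{\{\sigma<\infty\}}\int_0^\infty 1_{\{Y_{\sigma+u}^\omega=y\}}\,du$. Taking $E_x^\omega$ and applying the strong Markov property at $\sigma$, the inner integral contributes $E^\omega_{Y_\sigma^\omega}\big[\int_0^\infty 1_{\{Y_u^\omega=y\}}\,du\big]=\theta^\omega(y)g^\omega(Y_\sigma^\omega,y)$, and decomposing over the finitely many possible values $Y_\sigma^\omega=v\in F^\omega$ yields $\theta^\omega(y)g^\omega(x,y)=\sum_{v\in F^\omega}P_x^\omega(\sigma<\infty,\,Y_\sigma^\omega=v)\,\theta^\omega(y)g^\omega(v,y)$. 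Dividing by $\theta^\omega(y)$ and recognizing $P_x^\omega(\sigma<\infty,\,Y_\sigma^\omega=v)=\gamma_{x,F^\omega}^\omega(v)$ gives \eqref{RFGr21}.

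For \eqref{RFGr22} I would, in \eqref{RFGr21}, bound each $g^\omega(v,y)\ge\inf_{z\in F^\omega}g^\omega(z,y)$, pull this infimum out of the (finite) sum, and use $\sum_{v\in F^\omega}\gamma_{x,F^\omega}^\omega(v)=\gamma_{x,F^\omega}^\omega(F^\omega)=P_x^\omega(\sigma_{F^\omega}^{+\omega}<\infty)$, which for $x\notin F^\omega$ equals $P_x^\omega(Y_t^\omega\in F^\omega\text{ for some }t>0)$. This gives $P_x^\omega(Y_t^\omega\in F^\omega\text{ for some }t>0)\le g^\omega(x,y)/\inf_{z\in F^\omega}g^\omega(z,y)$ for every $y\in F^\omega$, and taking the infimum over $y\in F^\omega$ finishes this case; when $x\in F^\omega$ the left-hand side equals $1$ by right-continuity of $Y^\omega$, while each ratio on the right is $\ge1$ because $x$ is itself one of the competing $z$'s, so the bound is trivial there.

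I do not expect a real obstacle: the only points needing a little care are the justification of the occupation-time identity (Tonelli plus finiteness from Proposition \ref{CV30}) and the continuous-time strong Markov step at the stopping time $\sigma_{F^\omega}^{+\omega}$, both standard; finiteness of $F^\omega$ makes the decomposition over $v$ a finite sum with no convergence concerns.
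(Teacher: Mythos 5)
Your proof is correct and takes essentially the same approach as the paper: both hinge on decomposing via the strong Markov property at the first hitting time $\sigma=\sigma_{F^\omega}^{+\omega}$. The paper conditions at each fixed time $t$ to get $P_x^{\omega}(Y_t^{\omega}=y)=\sum_{v}\int_0^t P_v^{\omega}(Y_{t-s}^{\omega}=y)\,\pi_{x,F}^{\omega}(ds,v)$ and then integrates in $t$ and swaps the order of integration, whereas you apply the strong Markov property once to the occupation-time representation $\theta^{\omega}(y)g^{\omega}(x,y)=E_x^{\omega}\bigl[\int_0^\infty 1_{\{Y_s^{\omega}=y\}}ds\bigr]$; the two computations are interchangeable and lead to the same identity, and your derivation of \eqref{RFGr22} from \eqref{RFGr21} (pulling out $\inf_{z}g^{\omega}(z,y)$ and noting $\sum_v\gamma_{x,F^\omega}^{\omega}(v)$ equals the hitting probability) is exactly what the paper leaves as ``immediate.''
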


\begin{proof}
 We write $F= F^{\omega}$ and $\sigma = \sigma_{F^{\omega}}^{\omega +} = \inf \{ t >0 \mid Y_t^{\omega}  \in F \}$ for notational simplification. 
 Then for any $ x \not\in F$, $y \in F$ we have  
       \begin{align*}
                & P_x^{\omega} \left( Y_t^{\omega} = y \right) 
                         = E_x^{\omega} \left[ 1_{ \{ \sigma \le t \} } P_{Y_\sigma^{\omega} }^{\omega} \left( Y_{t -\sigma}^{\omega} = y \right) \right]  
                         = \sum_{v\in F}  E_x^{\omega} \left[ 1_{ \{ \sigma \le t \} } 1_{ \{ Y_{\sigma}^{\omega} = v \} } 
                                 P_{Y_\sigma}^{\omega} \left( Y_{t -\sigma}^{\omega} = y \right) \right] \\
                & = \sum_{v \in F} \int_{0}^t P_v^{\omega} \left[ Y_{t-s}^{\omega} = y \right] \pi_{x,F}^{\omega} (ds, v) .
       \end{align*} 
Hence we have 
       \begin{align*}
             & g^{\omega} (x,y) = \int_0^{\infty} \sum_{v \in F} \int_{0}^t q_{t-s}^{\omega} (v,y)  \pi_{x,F}^{\omega} (ds,v) dt
                =  \int_0^{\infty} \sum_{v \in F}  \int_{s}^{\infty} q_{t-s}^{\omega} (v,y) dt  \pi_{x,F}^{\omega} (ds,v)   \\ 
             & = \int_0^{\infty} \sum_{v \in F} g^{\omega} (v,y) \pi_{x,F}^{\omega}  (ds, v) 
                 = \sum_{v \in F} g^{\omega} (v,y)  \gamma_{x,F}^{\omega} (v). 
       \end{align*} 
 We thus complete the proof of \eqref{RFGr21}. \eqref{RFGr22} is immediate from \eqref{RFGr21}. 
\end{proof}

\begin{Lemma}  \label{RFGr30} 
  Under Assumption \ref{Ass10} (1) (2) (3) (4) and Assumption \ref{Ass:Gr} there exist positive constants $c_1, c_2$ such that for almost all $\omega \in \Omega$ the following hold. 
             \begin{enumerate}  \renewcommand{\labelenumi}{(\arabic{enumi})} 
                        \item  $\displaystyle  P_x^{\omega} \left( \sigma_{B(x_0,2r)}^{+\omega} < \infty \right)  
                                           \le    c_{1} \frac{r^{\alpha -\beta} }{(d(x,x_0) -r)^{\alpha -\beta} }$ 
                                   for all $x,x_0 \in V(G^{\omega})$, $r \ge 1$ with $d(x,x_0) \ge 2r + 1$ and   
                                  $\displaystyle r \ge \max_{v \in B(x_0,r)} N_{v} (\omega )$.   

                        \item  $\displaystyle   P_x^{\omega} \left( \sigma_{B(x_0,2r)}^{+\omega} < \infty \right)  
                                            \ge     c_{2} \frac{r^{\alpha -\beta} }{(d(x,x_0) +2r)^{\alpha -\beta} }  $ 
                                   for all $x,x_0 \in V(G^{\omega})$, $r \ge 1$ with $d(x,x_0) \ge 2r$, $r \ge N_x (\omega)$ and $\displaystyle r \ge \max_{v \in B(x_0,r)} N_v (\omega)$.  
                 \end{enumerate}
  
\end{Lemma}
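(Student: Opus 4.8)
The plan is to read both inequalities off the two probabilistic identities already available in this subsection, fed with the two-sided Green function bound of Proposition~\ref{CV30}. For the upper bound~(1) I would use the first-entrance identity \eqref{RFGr22} of Lemma~\ref{RFGr20}; for the lower bound~(2) I would use the equilibrium-measure identity \eqref{RFGr15} together with the capacity lower bound of Lemma~\ref{RFGr50}. Throughout write $F^\omega=B^\omega(x_0,2r)$, and note that $d(x,x_0)>2r$ forces $x\notin F^\omega$, so both identities apply.

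For~(1), I would fix a vertex $x_1$ on a geodesic from $x$ to $x_0$ with $d(x_1,x_0)=r$; this exists because $d(x,x_0)\ge 2r+1$, and it satisfies $d(x,x_1)=d(x,x_0)-r$ and $x_1\in B(x_0,r)$, so $N_{x_1}(\omega)\le r$ by hypothesis. Applying \eqref{RFGr22} with $y=x_1$ gives
\[
P_x^\omega\bigl(\sigma_{F^\omega}^{+\omega}<\infty\bigr)\le\frac{g^\omega(x,x_1)}{\inf_{z\in F^\omega}g^\omega(z,x_1)}.
\]
In the numerator $d(x,x_1)=d(x,x_0)-r\ge r+1>N_{x_1}(\omega)\ge N_x(\omega)\wedge N_{x_1}(\omega)$, so Proposition~\ref{CV30} yields $g^\omega(x,x_1)\le c_2\,(d(x,x_0)-r)^{-(\alpha-\beta)}$. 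In the denominator every $z\in F^\omega$ has $d(z,x_1)\le d(z,x_0)+d(x_0,x_1)\le 3r$, so Proposition~\ref{CV30} gives $g^\omega(z,x_1)\ge c_1\,(3r)^{-(\alpha-\beta)}$; dividing and absorbing $3^{\alpha-\beta}$ into the constant produces the claimed estimate.

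For~(2), I would start from \eqref{RFGr15} with $F=F^\omega$,
\[
P_x^\omega\bigl(\sigma_{F^\omega}^{+\omega}<\infty\bigr)=\sum_{y\in F^\omega}g^\omega(x,y)\,e_{F^\omega}^\omega(y)\,\theta^\omega(y)\ \ge\ \Bigl(\inf_{y}g^\omega(x,y)\Bigr)\,\Capacity^\omega(F^\omega),
\]
the infimum being taken over $\Supp e_{F^\omega}^\omega$. The point is that $e_{F^\omega}^\omega$ vanishes off the set $\{y:d(x_0,y)=2r\}$, since any $y$ with $d(x_0,y)\le 2r-1$ has all its neighbours in $F^\omega$ and hence $P_y^\omega(\sigma_{F^\omega}^{+\omega}=\infty)=0$. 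For $y$ with $d(x_0,y)=2r$ one has $d(x,y)\le d(x,x_0)+2r$, so Proposition~\ref{CV30} gives $g^\omega(x,y)\ge c_1\,(d(x,x_0)+2r)^{-(\alpha-\beta)}$, while Lemma~\ref{RFGr50} gives $\Capacity^\omega(F^\omega)\ge c\,r^{\alpha-\beta}$; multiplying yields the assertion.

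The arithmetic of combining these bounds is routine; the part I expect to cost the most care is verifying that \emph{every} pair of vertices to which Proposition~\ref{CV30} is applied actually satisfies the standing requirement $d(\cdot,\cdot)\ge N_\cdot(\omega)\wedge N_\cdot(\omega)$. The troublesome pairs are $(z,x_1)$ with $z$ close to $x_1$ in the denominator of~(1), and $(x,y)$ when $d(x,x_0)$ is only slightly larger than $2r$ in~(2), since then the Green function estimate is not directly available at that pair. To handle these I would use the hypotheses $r\ge N_x(\omega)$ and $r\ge\max_{v\in B(x_0,r)}N_v(\omega)$ carefully, splitting off the regime where $d(x,x_0)$ is comparable to $r$ — there the target bounds are positive constants and can be obtained by a cruder soft argument — from the regime $d(x,x_0)\gg r$; if necessary one replaces $B(x_0,2r)$ by a slightly smaller concentric ball, or relocates the auxiliary vertex $x_1$, so that all vertices entering the estimates lie in a region where the $N_v(\omega)$ are controlled.
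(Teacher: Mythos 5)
Your overall plan matches the paper's: read the upper bound in~(1) off \eqref{RFGr22}, read the lower bound in~(2) off \eqref{RFGr15} together with Lemma~\ref{RFGr50}, and feed both with the two-sided Green function estimate of Proposition~\ref{CV30}. Your support observation in~(2), that $e^\omega_{F^\omega}$ is carried by $\{y : d(x_0,y)=2r\}$, is a legitimate variant of the paper's treatment. But the one tool the paper actually leans on, and which you never invoke, is the \emph{minimum principle for superharmonic functions}: the paper bounds $\inf_{z\in B^\omega(x_0,2r)} g^\omega(z,y)$ from below by passing to $B^\omega(y,3r)$, then to the sphere $\{z : d(y,z)=3r+1\}$ where $d(y,z)=3r+1 > r \ge N_y(\omega)$ and Proposition~\ref{CV30} applies cleanly; in~(2) it likewise pushes $\inf_{y\in B^\omega(x_0,2r)} g^\omega(x,y)$ to the sphere $\{y: d(x,y)=d(x,x_0)+2r+1\}$ where $d(x,y) > r \ge N_x(\omega)$. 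This single step neutralizes exactly the pairs you flag as ``troublesome.''

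Without that step, the gap you identify is real and your proposed remedies do not close it, at least for~(1). The problem pairs $(z,x_1)$ sit inside $B(x_0,2r)$ regardless of how far $x$ is from $x_0$, so splitting into regimes in $d(x,x_0)$ is irrelevant there; $x_1$ must lie in $F^\omega$ for \eqref{RFGr22} to apply, so wherever you relocate it there will be $z\in F^\omega$ at distance $1$ or $2$; and shrinking the target ball does not change which $z$ are near $x_1$. What \emph{would} work without the minimum principle is a direct integration of the heat-kernel lower bound \eqref{LHK}: using $N_{x_1}\le r$ and $d(x_1,z)\le 3r$, one gets $g^\omega(z,x_1) = g^\omega(x_1,z) \ge \int_{cr^\beta}^\infty q^\omega_t(x_1,z)\,dt \ge c' r^{\beta-\alpha}$ for \emph{all} $z\in F^\omega$ at once, since the exponential factor is bounded below once $t\gtrsim r^\beta$. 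That is essentially a re-derivation of the Green function lower bound under the alternative hypothesis ``$N_{x_1}\le r$'' in place of ``$d(z,x_1)\ge N_{x_1}\wedge N_z$,'' and the same device also handles the $(x,y)$ pairs in~(2) when $d(x,x_0)$ is comparable to $r$. As written, though, your proof leaves these cases to a ``cruder soft argument'' that is not spelled out and whose obvious candidates do not succeed, whereas the paper's appeal to the minimum principle settles them in one line. I would either add the direct heat-kernel integration or invoke the minimum principle as the paper does.
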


\begin{proof}
We first prove (1) by using \eqref{RFGr22}.  
Let $x, x_0 \in V(G^{\omega})$ satisfy $d(x,x_0) \ge 2r +1$. 
  For any $y \in B(x_0,r)$ we have 
       \begin{align*}
                    d(x,y) \ge d(x,x_0) - d(x_0,y)  \ge d(x,x_0) -r  \ge 2r -r = r.
       \end{align*} 
  By Proposition \ref{CV30}, for any $y \in B^{\omega} (x_0,r)$ and for any $r$ with $\displaystyle r \ge \max_{y \in B(x_0,r)} N_y (\omega) $ we have 
            \begin{align} \label{RFGr32}
                   g^{\omega} (x,y) \le \frac{c_{1}}{d(x,y)^{\alpha - \beta}} \le  \frac{c_1}{ (d(x,x_0) - r)^{\alpha - \beta}}.
            \end{align}
 Next note that $B(x_0,2r) \subset B(y,3r)$ for any $y \in B(x_0,r)$.   
     Since $g^{\omega} (\cdot, y)$ is superharmonic function, using minimal principle and Proposition \ref{CV30} we have      
             \begin{align}  \label{RFGr33}
                   \inf_{z \in B^{\omega} (x_0,2r)}  g^{\omega} (z,y) \ge \inf_{z \in B^{\omega} (y,3r)} g^{\omega} (z,y) 
                            \ge \inf_{ \substack{z \in B^{\omega}(y,3r+1) \\ d(y,z) = 3r+1}} g^{\omega}(z,y)
                            \ge \frac{c_2}{r^{\alpha -\beta}}
      \end{align}
    for all $r \ge 1$ and $y \in B^{\omega} (x_0,r)$ with $\displaystyle 3r +1 \ge \max_{v \in B(x_0,r)} N_v (\omega )$.   
 Hence by  \eqref{RFGr22}, \eqref{RFGr32} and \eqref{RFGr33} we have 
      \begin{align*}
                  P_x^{\omega} \left( \sigma_{B(x_0,2r)}^{+} < \infty \right)  
                      \le \inf_{y \in B^{\omega} (x_0,r)} \left(  \frac{ g^{\omega} (x,y) }{ \inf_{z \in B^{\omega} (x_0,2r)} g(z,y)} \right)
                      \le  c_{3} \frac{r^{\alpha -\beta} }{(d(x,x_0) -r)^{\alpha -\beta} }
      \end{align*}
    for all $r$ with $\displaystyle r \ge \max_{v \in B(x_0,r)} N_v (\omega )$.   
 Thus we complete the proof of $(1)$.  

Next we prove (2).  Note that  
          \begin{align*}
                   &  P_x^{\omega} \left( \sigma_{B(x_0,2r)}^{+\omega} < \infty \right) 
                            = \sum_{y\in B^{\omega} (x_0,2r)} g^{\omega} (x,y) e_{B^{\omega} (x_0,2r)}^{\omega} (y) \theta^{\omega} (y)  
                            \qquad (\text{use \eqref{RFGr15}})  \\
                   &  \ge \left( \inf_{y \in B^{\omega} (x_0,2r)} g^{\omega} (x,y) \right) 
                              \sum_{y \in B^{\omega} (x_0,2r)}  e_{B(x_0,2r)}^{\omega} (y) \theta^{\omega} (y)     
                              = \left( \inf_{y \in B^{\omega} (x_0,2r)} g^{\omega} (x,y) \right)   \Capacity^{\omega} (B(x_0,2r) )  . 
           \end{align*} 
  By $B(x_0,2r) \subset B(x,d(x,x_0) + 2r)$, the minimum principle for superharmonic functions and our assumptions we have
           \begin{align*}
                     \inf_{y \in B^{\omega} (x_0,2r)} g^{\omega} (x,y) 
                          \ge \inf_{y \in B^{\omega} (x, d(x,x_0) +2r)} g^{\omega} (x,y) 
                          \ge \inf_{ \substack{ y \in B^{\omega} (x,d(x,x_0) + 2r+1) \\ d(y,x) = d(x,x_0) +2r+1}} g^{\omega} (x,y) 
                          \ge \frac{c_{4}}{ ( d(x,x_0) + 2r )^{\alpha - \beta} } 
            \end{align*} 
    for $r \ge N_x (\omega)$. 
  By  Lemma \ref{RFGr50} $\displaystyle   \Capacity^{\omega} (B(x_0,r) ) \ge c_5 r^{\alpha - \beta}$ for 
        $\displaystyle r \ge \max_{v \in B(x_0,r)} N_v (\omega)$. Hence 
           \begin{align*}
                     &P_x^{\omega} \left( \sigma_{B(x_0,2r)}^{+\omega} < \infty \right) 
                               \ge \frac{c_{6} r^{\alpha - \beta}}{ ( d(x,x_0) + 2r )^{\alpha - \beta} } 
           \end{align*}
   for $r \ge N_x (\omega)$ and $\displaystyle  r \ge \max_{v \in B(x_0,r)} N_v (\omega)$.  
 We thus complete the proof. 
 
\end{proof}

\begin{Lemma}  \label{RFGr70} 
   Under Assumption \ref{Ass10} (1) (2) (3) (4) and Assumption \ref{Ass:Gr} there exist positive constants $c_1$ and $T_0$ such that 
           \begin{align*} 
                   P_x^{\omega} \left( d(x_0, Y_s^{\omega}) \le 2r \text{ for some $s>t$} \right) \le \frac{c_1 r^{\alpha - \beta}t }{t^{\alpha / \beta}}  
           \end{align*} 
 for almost all $\omega \in \Omega$, all $t\ge T_0$, $r \ge 1$ and  $x,x_0 \in V(G^{\omega}) $ with $t^{1/\beta} \ge r$, $d(x,x_0) \le r$ 
 and $\displaystyle r \ge \max_{z \in B(x_0, r)} N_z (\omega)$. 
\end{Lemma}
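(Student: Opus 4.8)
The plan is to condition on the position $Y_t^\omega$ and apply the Markov property. Set $A:=\{d(x_0,Y_s^\omega)\le 2r \text{ for some } s>t\}$. Decomposing $A$ according to whether $Y_t^\omega\in B(x_0,3r)$, and using the Markov property at the fixed time $t$ (noting that, started at a point outside $B(x_0,2r)$, the event $\{Y_s^\omega\in B(x_0,2r)\text{ for some }s>0\}$ coincides with $\{\sigma_{B(x_0,2r)}^{+\omega}<\infty\}$ by right--continuity of paths), one gets
\begin{align*}
 P_x^\omega(A)\ \le\ P_x^\omega\big(Y_t^\omega\in B^\omega(x_0,3r)\big)\ +\ E_x^\omega\Big[\mathbf{1}_{\{d(x_0,Y_t^\omega)>3r\}}\,P^\omega_{Y_t^\omega}\big(\sigma_{B(x_0,2r)}^{+\omega}<\infty\big)\Big]\ =:\ \mathrm{I}+\mathrm{II}.
\end{align*}
A fact I would use repeatedly is that, under the hypotheses, $N_z(\omega)\le r\le t^{1/\beta}\le t$ for every $z\in B(x_0,r)$, so every $N_\cdot(\omega)$--requirement in \eqref{UHK}, \eqref{Vol}, Lemma \ref{RFHK10} and Lemma \ref{RFGr30} is automatically met; and I will take $T_0$ large enough that auxiliary inequalities such as $4t^{1/\beta}\le t$ and $t^{1/\beta}\le t/8$ hold for $t\ge T_0$.

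The term $\mathrm{I}$ is elementary: every $y\in B(x_0,3r)$ satisfies $d(x,y)\le 4r\le t$, so the near--diagonal case of \eqref{UHK} gives $q_t^\omega(x,y)\le c_{1.1}t^{-\alpha/\beta}$, and summing $q_t^\omega(x,y)\theta^\omega(y)$ over $y\in B^\omega(x_0,3r)$ together with \eqref{Vol} yields $\mathrm{I}\le c\,r^\alpha t^{-\alpha/\beta}$; since $r^\beta\le t$ this is $\le c\,r^{\alpha-\beta}t\cdot t^{-\alpha/\beta}$, the required form.

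For $\mathrm{II}$, if $d(y,x_0)>3r$ then $d(y,x_0)\ge 2r+1$ (as $r\ge1$) and $d(y,x_0)-r\ge\tfrac23 d(y,x_0)$, so Lemma \ref{RFGr30}(1) gives $P^\omega_y\big(\sigma_{B(x_0,2r)}^{+\omega}<\infty\big)\le c\,r^{\alpha-\beta}d(y,x_0)^{-(\alpha-\beta)}$, whence $\mathrm{II}\le c\,r^{\alpha-\beta}\,\Phi$ with $\Phi:=\sum_{y:\,d(y,x_0)>3r}q_t^\omega(x,y)\theta^\omega(y)\,d(y,x_0)^{-(\alpha-\beta)}$; it then remains to prove $\Phi\le c\,t^{1-\alpha/\beta}$. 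I would split the sum over the dyadic annuli $A_j=\{3r\cdot 2^j\le d(\cdot,x_0)<3r\cdot 2^{j+1}\}$, $j\ge0$, into those with $12r\cdot 2^j\le t$ and the remaining vertices, which all satisfy $d(\cdot,x_0)>t/4$. On an annulus of the first type every $y$ lies in the near--diagonal regime of \eqref{UHK} with $d(x,y)\asymp r2^j$, so \eqref{UHK} and \eqref{Vol} bound its contribution to $\Phi$ by a constant times $t^{1-\alpha/\beta}\,u_j\exp[-c\,u_j^{1/(\beta-1)}]$ with $u_j:=(r2^j)^\beta/t$; since $u_0=r^\beta/t\le1$ (this is exactly the hypothesis $r\le t^{1/\beta}$) and $u_{j+1}=2^\beta u_j$, the series $\sum_j u_j\exp[-c\,u_j^{1/(\beta-1)}]$ is bounded by a constant depending only on $\beta$ and the constants of \eqref{UHK}, so this part of $\Phi$ is $\le c\,t^{1-\alpha/\beta}$. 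For the remaining vertices, $d(y,x_0)^{-(\alpha-\beta)}\le(t/4)^{-(\alpha-\beta)}$ while their total $q_t^\omega(x,\cdot)\theta^\omega(\cdot)$--mass is at most $P_x^\omega(d(x,Y_t^\omega)>t/8)$, which by Lemma \ref{RFHK10} (applied with radius $t/8$, using $((t^{1-1/\beta})/8)^{\beta/(\beta-1)}=c\,t$) is $\le Ce^{-ct}$; this contributes $\le Ct^{-(\alpha-\beta)}e^{-ct}\le t^{1-\alpha/\beta}$ for $t\ge T_0$. Altogether $\Phi\le c\,t^{1-\alpha/\beta}$, and combining $\mathrm{I}$ and $\mathrm{II}$ and enlarging $c_1,T_0$ finishes the proof.

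The step I expect to require the most care is the bound $\Phi\le c\,t^{1-\alpha/\beta}$: one must verify that the geometric growth $u_{j+1}=2^\beta u_j$ together with the base case $u_0\le1$ --- which is precisely the hypothesis $t^{1/\beta}\ge r$ --- makes the annulus sum in the near--diagonal regime collapse to an $\omega$--independent multiple of $t^{1-\alpha/\beta}$, and one must correctly handle the crossover into the sub--diagonal (Poissonian) part of \eqref{UHK}, for which Lemma \ref{RFHK10} is the convenient substitute. Everything else is routine bookkeeping with the heat--kernel and volume estimates.
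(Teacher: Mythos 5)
Your proposal is correct and follows essentially the same route as the paper: condition on $Y_t^\omega$, bound the return probability $P_y^\omega(\sigma_{B(x_0,2r)}^{+\omega}<\infty)$ via Lemma \ref{RFGr30}, and then integrate against the heat kernel using \eqref{UHK} and \eqref{Vol}. The only difference is bookkeeping in the far region, where you use dyadic annuli together with Lemma \ref{RFHK10} to absorb the Poissonian tail, while the paper sums over arithmetic annuli of width $t^{1/\beta}$ and invokes the Poissonian part of \eqref{UHK} directly; both are sound and yield the same bound.
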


\begin{proof}  
  First note that 
       \begin{align*} 
              & P_x^{\omega} \left( d(x_0, Y_s^{\omega})  \le 2r \text{ for some $s>t$} \right) 
                        = \sum_{y \in V(G^{\omega}) } P_x^{\omega} \left( Y_t^{\omega} = y \right) 
                                  P_y^{\omega} \left( d(x_0, Y_s^{\omega}) \le 2r \text{ for some $s >0$} \right) \\
              &=  \sum_{y; t^{1/\beta} < d(x_0,y)-r}  P_x^{\omega} \left( Y_t^{\omega} = y \right)  
                            P_y^{\omega} \left( d(x_0, Y_s^{\omega}) \le 2r \text{ for some $s >0$} \right)  \\
              &~~  + \sum_{y; r < d(x_0,y) -r \le t^{1/\beta}} P_x^{\omega} \left( Y_t^{\omega} = y \right)  
                            P_y^{\omega} \left( d(x_0, Y_s^{\omega}) \le 2r \text{ for some $s >0$} \right)  \\ 
              &~~  + \sum_{y;d(x_0,y) \le 2r}  P_x^{\omega} \left( Y_t^{\omega} = y \right)  
                            P_y^{\omega} \left( d(x_0, Y_s^{\omega}) \le 2r \text{ for some $s >0$} \right)  \\
              &:= J_1 + J_2 + J_3. 
        \end{align*}
We estimate $J_1, J_2$ and $J_3$ in the following way.

For $t,r \ge 1$ with $t \ge N_{x} (\omega)$ and $\displaystyle r \ge \max_{z \in B(x_0,r)} N_{z}$ (note that $t \ge N_x (\omega)$ follows from our assumptions),    
  using \eqref{UHK}, Lemma \ref{RFGr30}, \eqref{Vol} we have 
         \begin{align*} 
               J_1 &\le \sum_{y; t^{1/\beta} < d(x_0,y)-r}  \frac{c_1 r^{\alpha -\beta} }{ (d(y,x_0) -r)^{\alpha - \beta} }  \\
                             &  \qquad \cdot   \left\{  \frac{c_{1.1} }{t^{\alpha / \beta}} 
                                             \exp \left[ -c_{1.2} \left( \frac{d(x,y)}{t^{1/\beta}} \right)^{ \frac{\beta}{\beta -1}} \right] 
                                             + c_{1.3} \exp \left[ -c_{1.4} d(x,y) \right]  \right\} \theta^{\omega} (y)  \quad (\text{use \eqref{UHK} and Lemma \ref{RFGr30}}) \\  
                   &\le  \sum_{\ell = 1}^{\infty}  \sum_{y; d(x_0,y) \in [ \ell t^{1/\beta} +r, (\ell +1) t^{1/\beta}+r ] }   \frac{c_{2} r^{\alpha -\beta} }{ (d(y,x_0) -r)^{\alpha - \beta} }  
                                       \frac{1 }{t^{\alpha / \beta}} \exp \left[ -c_{1.2} \left( \frac{d(y,x_0) - r}{t^{1/\beta}} \right)^{ \frac{\beta}{\beta -1}} \right]  
                                       \theta^{\omega} (y)   \\
                            &  \qquad +   \sum_{\ell = 1}^{\infty}  \sum_{y; d(x_0,y) \in [ \ell t^{1/\beta} +r, (\ell +1) t^{1/\beta} +r] }  
                                       \frac{c_{3} r^{\alpha -\beta} }{ (d(y,x_0) -r)^{\alpha - \beta} }  
                                        \exp \left[ -c_{1.4} (d(y,x_0) - r) \right]   \theta^{\omega} (y)      \\  
                              &  \qquad \left( \text{since $d(x,y) \ge d(y,x_0) - d(x_0,x)$ and $d(x_0,x) \le r$} \right) \\   
                    & \le  \sum_{\ell = 1}^{\infty}   \frac{c_{2} r^{\alpha -\beta} }{ (\ell t^{1/\beta})^{\alpha - \beta} }  \frac{1}{t^{\alpha / \beta}}
                                       \exp \left[ - c_{1.2} \ell^{\frac{\beta}{\beta -1}} \right] \theta^{\omega} \left( B(x_0, (\ell + 1)t^{1/\beta} +r) \right)  \\
                             &  \qquad +  \sum_{\ell = 1}^{\infty}   \frac{c_{3} r^{\alpha -\beta} }{ (\ell t^{1/\beta})^{\alpha - \beta} }  \exp \left[ - c_{1.4} \ell t^{1/\beta} \right] 
                                              \theta^{\omega} \left( B(x_0, (\ell + 1)t^{1/\beta} +r) \right)   \\
                    &\le \frac{ c_4 r^{\alpha - \beta}}{t^{\alpha / \beta -1}} \sum_{\ell =1}^{\infty} 
                                  \ell^{\beta} \exp \left[ - c_{1.2} \ell^{\beta / \beta -1} \right]   
                                  + \frac{ c_5 r^{\alpha - \beta}}{t^{\alpha / \beta -1}}  t^{\alpha / \beta}  \sum_{\ell = 1}^{\infty}  
                                           \ell^{\beta}   \exp \left[ - c_{1.4} \ell t^{1/\beta} \right]  \\  
                               &\qquad (\text{use $\theta^{\omega} ( B(x_0, (\ell + 1)t^{1/\beta} +r) )\le c (\ell t^{1/\beta})^{\alpha}$ since $t^{1/\beta} \ge r$ })    \\
                    &\le \frac{ c_6 r^{\alpha - \beta}}{t^{\alpha / \beta -1}},  
                               \qquad (\text{since $t \mapsto t^{\alpha / \beta}  \sum_{\ell = 1}^{\infty}  \ell^{\beta}   \exp \left[ - c_{1.4} \ell t^{1/\beta} \right]  $ is bounded} ).   
         \end{align*}

Next we see $J_2$. 
First, set  $\phi_r (k) = (r+k)^{\beta} (k,r \ge 1)$. We can easily see that there exist a positive constant $K_1 = K_1 (r) >1$ such that 
               \begin{align}  \label{RFGr72}
                                        \phi_r (k) \le \frac{1}{2} \phi_r (K_1 k)
               \end{align}
     for all $k \ge 1$.  Using this inequality we see that for $r \ge N_{x_0} (\omega)$ 
               \begin{align}   \label{RFGr73}
                                  & \sum_{y;r \le d(x_0,y) -r \le t^{1/\beta}}  \frac{\theta^{\omega} (y) }{ (d(y,x_0)-r)^{\alpha - \beta} } 
                                          \le \sum_{k=2r}^{r+t^{1/\beta}}  \frac{ \theta^{\omega} (B(x_0,k) \setminus B(x_0,k-1)) }{ (k-r)^{\alpha - \beta} }  \notag  \\
                                  & \le c_7 + \sum_{\ell = 1}^{(t^{1/\beta} - r)/K_1} \sum_{k \in [2r+ \ell K_1, 2r + (\ell + 1)K_1]} 
                                              \frac{ \theta^{\omega} (B(x_0,k) \setminus B(x_0,k-1)) }{ (k-r)^{\alpha - \beta} }   
                                          \le c_7 + c_8 \sum_{\ell = 1}^{(t^{1/\beta} - r)/K_1}  (r + \ell K_1)^{\beta}  \notag   \\
                                  & \le  c_7 + c_9 t, \qquad (\text{use \eqref{RFGr72}}).
               \end{align} 
 We go back to estimate $J_2$. Note that for $y$ with $r \le d(x_0,y) -r \le t^{1/\beta}$ we see $d(x,y) \le d(x,x_0) + d(x_0,y) \le 3t^{1/\beta}$. 
For $r \ge 1$, $t \ge 1$ with  $t \ge T_0 := 3^{\beta / (\beta -1)}$ (so that $3t^{1/\beta}\le t$ for $t \ge T_0$) 
  and $\displaystyle r \ge \max_{z \in B(x_0,r)} N_{z} (\omega)$ (in particular $t \ge N_{x} (\omega)$),
  using Lemma \ref{RFGr30}, \eqref{UHK} and \eqref{RFGr73} we have 
        \begin{align*}  
                 J_2 &\le \sum_{y; r < d(x_0,y)-r \le t^{1/\beta}} 
                             \frac{c_{10} r^{\alpha - \beta} }{(d(y,x_0) -r)^{\alpha - \beta}} \frac{\theta^{\omega}(y)}{t^{\alpha/\beta}} 
                             =  \frac{c_{10} r^{\alpha - \beta} }{t^{\alpha/\beta}}   \sum_{y; r\le d(x_0,y)-r \le t^{1/\beta}}   \frac{\theta^{\omega} (y)}{(d(y,x_0) -r)^{\alpha - \beta}} \\
                       &  \le  \frac{c_{11} r^{\alpha - \beta} t}{t^{\alpha / \beta}},  \qquad (\text{use \eqref{RFGr73}}). 
        \end{align*}

Finally we see $J_3$. For $t \ge T_0 := 3^{\beta/(\beta-1)}$, $N_{x} (\omega ) \le t$ and $N_{x} (\omega) \le r$, using \eqref{UHK} we have  
        \begin{align*}
                J_3  &\le  \sum_{y;d(y,x_0) \le 2r}   P_x^{\omega} \left( Y_t^{\omega} = y \right)  
                                 =   \sum_{y;d(y,x_0) \le 2r}  q_t^{\omega} (x,y) \theta^{\omega} (y)    \\  
                      &\le  \sum_{y; d(x,y) \le 3r} q_t^{\omega} (x,y) \theta^{\omega} (y) 
                                 \le \frac{c_{12} r^{\alpha}}{t^{\alpha / \beta}} 
                                \le \frac{c_{12} r^{\alpha - \beta} t}{t^{\alpha / \beta}}.
        \end{align*}
We thus complete the proof. 
\end{proof}

\begin{Lemma}  \label{RFGr80}
  Under Assumption \ref{Ass10} (1) (2) (3) (4) and Assumption \ref{Ass:Gr}  there exist constants $c_1>0, c_2 , T_0 \ge 1$ such that 
         \begin{align*}
                   P_x^{\omega} \left( d(x_0,Y_s^{\omega} ) \le 2r \text{ for some $s > t$ } \right)  \ge  \frac{c_1r^{\alpha - \beta} t}{t^{\alpha / \beta}}  
         \end{align*}
     for almost all $\omega \in \Omega$, all $r \ge 1$, $t \ge T_0$, $x,x_0 \in V(G^{\omega})$ with $d(x,x_0) \le r$, $t \ge r^{\beta}$,  
     $\displaystyle r \ge \max_{z \in B(x_0,c_{2} t^{1/\beta})} N_z (\omega)$. 
\end{Lemma}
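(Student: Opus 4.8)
The plan is to run a second moment (Paley--Zygmund) argument for the occupation time of $B(x_0,2r)$ over the window $[t,2t]$, in the spirit of \cite[Section 4.1]{SW}. Set
\[
  W = W^{\omega} := \int_t^{2t} 1_{\{ d(x_0, Y_s^{\omega}) \le 2r \}}\, ds ,
\]
which is finite ($W \le t$) and satisfies $\{ d(x_0,Y_s^{\omega}) \le 2r \text{ for some } s>t \} \supseteq \{ W>0 \}$ (a set of $s$ of positive measure in $[t,2t]$ contains points $>t$). Applying Cauchy--Schwarz to $E_x^{\omega}[W] = E_x^{\omega}[ W\, 1_{\{W>0\}} ]$ gives
\[
  P_x^{\omega}\bigl( d(x_0,Y_s^{\omega}) \le 2r \text{ for some } s>t \bigr) \ \ge\ P_x^{\omega}(W>0) \ \ge\ \frac{ \bigl( E_x^{\omega}[W] \bigr)^2 }{ E_x^{\omega}[W^2] } ,
\]
so it suffices to prove $E_x^{\omega}[W] \ge c\, r^{\alpha} t\, t^{-\alpha/\beta}$ and $E_x^{\omega}[W^2] \le C\, r^{\alpha+\beta} t\, t^{-\alpha/\beta}$; dividing yields a lower bound of the required form $c_1 r^{\alpha-\beta} t\, t^{-\alpha/\beta}$.

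For the first moment, $E_x^{\omega}[W] = \int_t^{2t} \sum_{y \in B^{\omega}(x_0,2r)} q_s^{\omega}(x,y)\,\theta^{\omega}(y)\, ds$. Every such $y$ has $d(x,y) \le d(x,x_0)+2r \le 3r \le 3t^{1/\beta}$; taking $c_2 = 2$ we have $B^{\omega}(x_0,2r) \subset B(x_0, c_2 t^{1/\beta})$, so the hypothesis $r \ge \max_{z\in B(x_0,c_2 t^{1/\beta})} N_z(\omega)$ gives $N_x(\omega)\le r$, and with $T_0$ large enough also $d(x,y)^{1+\epsilon} \le t \le s$. Since moreover $d(x,y)^{\beta}/s \le (3r)^{\beta}/t \le 3^{\beta}$, the Gaussian-type lower bound \eqref{LHK} applies and gives $q_s^{\omega}(x,y) \ge c\, s^{-\alpha/\beta} \ge c\, t^{-\alpha/\beta}$. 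Combined with \eqref{Vol}, $\sum_{y\in B^{\omega}(x_0,2r)}\theta^{\omega}(y) = V^{\omega}(x_0,2r) \ge c\, r^{\alpha}$, and integrating over a window of length $t$ we obtain $E_x^{\omega}[W] \ge c\, r^{\alpha} t\, t^{-\alpha/\beta}$.

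For the second moment, the Markov property gives
\[
  E_x^{\omega}[W^2] = 2 \int_t^{2t}\!\! \int_{s_1}^{2t} \sum_{y_1, y_2 \in B^{\omega}(x_0,2r)} q_{s_1}^{\omega}(x,y_1)\,\theta^{\omega}(y_1)\, q_{s_2-s_1}^{\omega}(y_1,y_2)\,\theta^{\omega}(y_2)\, ds_2\, ds_1 .
\]
For the first heat-kernel factor we are again in the near-diagonal regime, now of the upper bound \eqref{UHK}, so $q_{s_1}^{\omega}(x,y_1) \le c\, s_1^{-\alpha/\beta} \le c\, t^{-\alpha/\beta}$ and hence $\sum_{y_1\in B^{\omega}(x_0,2r)} q_{s_1}^{\omega}(x,y_1)\theta^{\omega}(y_1) \le c\, t^{-\alpha/\beta} V^{\omega}(x_0,2r) \le c\, r^{\alpha} t^{-\alpha/\beta}$. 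For the remaining factor, $\sum_{y_2\in B^{\omega}(x_0,2r)} q_u^{\omega}(y_1,y_2)\theta^{\omega}(y_2) = P_{y_1}^{\omega}\bigl(Y_u^{\omega}\in B^{\omega}(x_0,2r)\bigr)$ is $\le 1$ and, for $u \ge 4r$, is $\le c\, r^{\alpha} u^{-\alpha/\beta}$ by \eqref{UHK} and \eqref{Vol} (here $d(y_1,y_2)\le 4r\le u$ and $N_{y_1}(\omega)\le r\le u$). Bounding this probability by $\min\bigl(1,\ c\, r^{\alpha} u^{-\alpha/\beta}\bigr)$ and splitting the $u$-integral at $u$ of order $r^{\beta}$,
\[
  \int_{s_1}^{2t} P_{y_1}^{\omega}\bigl(Y_{s_2-s_1}^{\omega}\in B^{\omega}(x_0,2r)\bigr)\, ds_2 \ \le\ \int_0^{\infty}\! \min\bigl(1,\ c\, r^{\alpha} u^{-\alpha/\beta}\bigr)\, du \ \le\ C\, r^{\beta},
\]
where both the convergence of the integral at $\infty$ and the value $C r^{\beta}$ use $\alpha/\beta > 1$. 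Integrating $s_1$ over $[t,2t]$ gives $E_x^{\omega}[W^2] \le C\, r^{\alpha+\beta} t\, t^{-\alpha/\beta}$, and combined with the first-moment bound this finishes the proof.

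The main obstacle is keeping every heat-kernel estimate inside its admissible range: the hypotheses $d(x,x_0)\le r$, $t\ge r^{\beta}$ and $r \ge \max_{z\in B(x_0,c_2 t^{1/\beta})} N_z(\omega)$ are precisely what force all relevant vertices to lie within distance $3r \le 3t^{1/\beta}$ of $x_0$ and all relevant $N_z$ to be $\le r \le t^{1/\beta}$, so that \eqref{UHK} and \eqref{LHK} may be invoked near the diagonal (with a suitable choice of $T_0$ absorbing the finitely many small values of $r$). The other essential ingredient is $\alpha > \beta$ (Assumption \ref{Ass:Gr} (1)): it is exactly what makes $\int^{\infty} u^{-\alpha/\beta}\,du$ converge, so that the two-point occupation integral contributes $r^{\beta}$ rather than $t$ and the resulting lower bound carries the extra factor $t/r^{\beta}\ge 1$ over the naive estimate — matching, up to constants, the companion upper bound of Lemma \ref{RFGr70} (itself built on Proposition \ref{CV30}, valid under the same condition $\alpha>\beta$).
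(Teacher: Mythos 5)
Your Paley--Zygmund argument is correct and genuinely different from the paper's. The paper's proof of Lemma \ref{RFGr80} applies the Markov property at time $t$, restricts the intermediate point $y$ to an annulus $d(x_0,y)\in[2t^{1/\beta},c_2 t^{1/\beta}]$, applies the near-diagonal lower heat kernel bound \eqref{LHK} to $q_t^{\omega}(x,y)$, and then uses Lemma \ref{RFGr30} (2) --- a hitting-probability lower bound derived from the Green function estimate (Proposition \ref{CV30}) and the capacity lower bound (Lemma \ref{RFGr50}) --- to control $P_y^{\omega}(\sigma_{B(x_0,2r)}<\infty)$. Your route instead introduces the occupation time $W=\int_t^{2t}1_{\{d(x_0,Y_s^\omega)\le 2r\}}\,ds$, bounds $E_x^{\omega}[W]$ from below by \eqref{LHK}+\eqref{Vol} and $E_x^{\omega}[W^2]$ from above by \eqref{UHK}+\eqref{Vol}, and finishes with Cauchy--Schwarz. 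This has a real structural advantage: your argument never invokes Assumption \ref{Ass10} (4) (the Carne--Varopoulos bound) or the Green function machinery, whereas the paper's proof of this particular lemma routes through Proposition \ref{CV30}, which does. The key cancellation is the same in both proofs: the extra factor $t/r^{\beta}$ beyond the naive estimate comes from $\alpha>\beta$, via $\int_0^{\infty}\min(1,cr^{\alpha}u^{-\alpha/\beta})\,du\asymp r^{\beta}$ in your approach, and via the Green-function decay $g^{\omega}(x,y)\asymp d(x,y)^{-(\alpha-\beta)}$ in the paper's.

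One small imprecision worth tidying: the pointwise bound $P_{y_1}^{\omega}(Y_u^{\omega}\in B^{\omega}(x_0,2r))\le\min(1,\,c\,r^{\alpha}u^{-\alpha/\beta})$ is only established for $u\ge 4r$; for $u<4r$ you merely have $P\le 1$, and when $r$ is small enough that $4r>c^{\beta/\alpha}r^{\beta}$ the claimed $\min$ bound is not literally valid on the interval $(c^{\beta/\alpha}r^{\beta},4r)$. The fix is trivial --- split the $u$-integral as $\int_0^{4r}1\,du+\int_{4r}^{\infty}c\,r^{\alpha}u^{-\alpha/\beta}\,du$ and then note that both pieces are $\le C r^{\beta}$ once $r\ge 1$ is used (the first because $\beta>1$, the second after recognizing that for $r$ below a fixed threshold the whole expression is bounded and can be absorbed into the constant) --- but as written the sentence slightly overstates what \eqref{UHK} gives you.

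The remaining ranges and applicability conditions you invoke for \eqref{UHK} and \eqref{LHK} all check out: $d(x,y)\le 3r\le 3t^{1/\beta}$, $1+\epsilon<\beta$ yields $d(x,y)^{1+\epsilon}\le t$ for $t\ge T_0$, $t\ge r^{\beta}$ keeps the exponentials bounded, and the hypothesis $r\ge\max_{z\in B(x_0,c_2 t^{1/\beta})}N_z(\omega)$ with $c_2=2$ covers $x$, $y_1$, $y_2$, and the volume estimate at $x_0$.
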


\begin{proof}
Take a constant $c_2$ such that $c_{3.1} c_2^{\alpha} - c_{3.2}2^{\alpha} > 0$.  
  Note that by \eqref{Vol} we have $\theta^{\omega} (\{ y \in V(G) \mid d(x_0,y) \in [2t^{1/\beta},c_2t^{1/\beta}] \} ) \ge (c_{3.1}c_2^{\alpha} - c_{3.2} 2^{\alpha}) t^{\alpha / \beta} $, 
  and for $y$ and sufficiently large $t$ (say $t\ge T_0)$ with $d(x_0,y) \in [2t^{1/\beta}, c_2t^{1/\beta}]$ we have 
  $d(x,y)^{1+\epsilon} \le (d(x,x_0) + d(x_0,y) )^{1+\epsilon} \le \{ (c_2+1)t^{1/\beta} \}^{1+\epsilon} \le t$ since $1+\epsilon < \beta$ (see Assumption \ref{Ass10}). 
Then by Lemma \ref{RFGr30} (2), \eqref{LHK}, \eqref{Vol}, for $t,r$ as in the statement above we have 
        \begin{align*}  
                 & P_x^{\omega} \left( d(x_0,Y_s^{\omega} ) \le 2r \text{ for some $s > t$ } \right)  
                         = \sum_{y \in V(G^{\omega})} q_t^{\omega} (x,y) \theta^{\omega} (y) P_y^{\omega} \left( d(x_0,Y_s^{\omega}) \le 2r \text{ for some $s>0$ } \right)  \\ 
                 & \ge \sum_{y: d(x_0,y) \in [2 t^{1/\beta}, c_{2} t^{1/\beta} ] }  
                               q_t^{\omega} (x,y) \theta^{\omega} (y) P_y^{\omega} \left( d(x_0,Y_s^{\omega}) \le 2r \text{ for some $s>0$ } \right)   \\  
                 & \ge  \sum_{y: d(x_0,y) \in [2 t^{1/\beta}, c_{2} t^{1/\beta} ] }   \frac{c_{2.1}}{t^{\alpha / \beta}} 
                               \exp  \left[ - c_{2.2}  \left( \frac{d(x,y)}{t^{1/\beta}} \right)^{\beta/(\beta -1)} \right]
                               \theta^{\omega} (y) \frac{c_{3} r^{\alpha - \beta}}{ ( d(x_0,y) + 2r)^{\alpha - \beta} }    \\  
                         & \qquad  \left( \text{use \eqref{LHK}, Lemma \ref{RFGr30} and $d(x,y)^{1+\epsilon} \le t$, note that $t \ge N_x (\omega)$ follows from our assumptions} \right)  \\
                 & \ge  \sum_{y: d(x_0,y) \in [2 t^{1/\beta}, c_{2}t^{1/\beta} ] } 
                               \frac{c_{4}}{t^{\alpha / \beta}} \theta^{\omega} (y)  \frac{r^{\alpha - \beta}}{( t^{1/\beta})^{\alpha - \beta}}  \\
                      & \qquad \left( \text{ use $d(x,y) \le d(x,x_0) + d(x_0,y) \le (c_2 + 1) t^{1/\beta}$ for $y \in B(x_0,c_2 t^{1/\beta})$} \right)  \\
                 & \ge \frac{c_{5} (c_{3.1}c_1^{\alpha} - c_{3.2}2^{\alpha}) r^{\alpha - \beta}t }{  t^{\alpha / \beta} }.  
        \end{align*} 
 We thus complete the proof by taking $c_1 = c_{5} (c_{3.1}c_2^{\alpha} - c_{3.2}2^{\alpha})$.  
\end{proof}

\begin{Lemma}  \label{RFGr90} 
  Under  Assumption \ref{Ass10} (1) (2) (3) (4) and Assumption \ref{Ass:Gr}  
  there exist positive constants $c_1, c_2$, $\eta_0$, $T_0$ such that for any $\eta \ge \eta_0$ the following holds; 
       \begin{align*}
               P_x^{\omega} \left( d(x_0, Y_s^{\omega} ) \le 2r \text{ for some $s \in (t, \eta t]$ } \right)  \ge \frac{c_1r^{\alpha - \beta} t}{t^{\alpha / \beta}}
       \end{align*} 
    for almost all $\omega \in \Omega$, all $r \ge 1$, $t \ge T_0$, $x,x_0 \in V(G^{\omega})$ with $d(x,x_0) \le r$, $t \ge r^{\beta}$, 
    $\displaystyle r \ge \max_{z \in B(x_0,c_2 t^{1/\beta})} N_z (\omega)$.   
\end{Lemma}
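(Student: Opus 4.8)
The plan is to derive the lower bound by \emph{subtracting} a ``returns only very late'' probability from a ``returns at some time after $t$'' probability, both of which have already been controlled: the former from above by Lemma \ref{RFGr70} and the latter from below by Lemma \ref{RFGr80}. Since for any $\eta\ge 1$
\begin{align*}
\{ d(x_0,Y_s^\omega)\le 2r \text{ for some } s>t \}
= \{ d(x_0,Y_s^\omega)\le 2r \text{ for some } s\in(t,\eta t] \} \cup \{ d(x_0,Y_s^\omega)\le 2r \text{ for some } s>\eta t \},
\end{align*}
we get
\begin{align*}
P_x^\omega\left( d(x_0,Y_s^\omega)\le 2r \text{ for some } s\in(t,\eta t] \right)
\ \ge\ P_x^\omega\left( d(x_0,Y_s^\omega)\le 2r \text{ for some } s>t \right)
- P_x^\omega\left( d(x_0,Y_s^\omega)\le 2r \text{ for some } s>\eta t \right),
\end{align*}
and the goal becomes showing that, once $\eta$ is large, the right-hand side is still of order $r^{\alpha-\beta}t/t^{\alpha/\beta}$.

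First I would fix $c_2$ to be the constant from Lemma \ref{RFGr80} (enlarged, if necessary, so that $c_2\ge 1$) and let $T_0$ be the larger of the two thresholds named $T_0$ in Lemmas \ref{RFGr70} and \ref{RFGr80}. Then, under the hypotheses of the present statement and for any $\eta\ge 1$, Lemma \ref{RFGr80} applies directly and gives $P_x^\omega( d(x_0,Y_s^\omega)\le 2r \text{ for some } s>t) \ge c\,r^{\alpha-\beta}t/t^{\alpha/\beta}$ for some constant $c>0$. For the subtracted term I would invoke Lemma \ref{RFGr70} with $\eta t$ playing the role of its ``$t$''; its hypotheses $(\eta t)^{1/\beta}\ge r$, $\eta t\ge T_0$, $d(x,x_0)\le r$ and $r\ge\max_{z\in B(x_0,r)}N_z(\omega)$ all follow from $t\ge r^\beta$, $\eta\ge 1$, $t\ge T_0$, $d(x,x_0)\le r$, $c_2\ge 1$ and $B(x_0,r)\subseteq B(x_0,c_2 t^{1/\beta})$. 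This yields
\begin{align*}
P_x^\omega\left( d(x_0,Y_s^\omega)\le 2r \text{ for some } s>\eta t \right)
\ \le\ \frac{c'\,r^{\alpha-\beta}(\eta t)}{(\eta t)^{\alpha/\beta}}
\ =\ c'\,\eta^{\,1-\alpha/\beta}\,\frac{r^{\alpha-\beta}t}{t^{\alpha/\beta}}.
\end{align*}

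The key point is that $\alpha/\beta>1$ by Assumption \ref{Ass:Gr}(1), so the exponent $1-\alpha/\beta$ is strictly negative and $\eta^{\,1-\alpha/\beta}\to 0$ as $\eta\to\infty$. I would therefore choose $\eta_0\ge 1$ so large that $c'\eta_0^{\,1-\alpha/\beta}\le c/2$; then for every $\eta\ge\eta_0$ the two estimates combine to give $P_x^\omega( d(x_0,Y_s^\omega)\le 2r \text{ for some } s\in(t,\eta t] ) \ge (c/2)\,r^{\alpha-\beta}t/t^{\alpha/\beta}$, which is the claim with $c_1=c/2$. I do not expect any genuine analytic obstacle here; the only thing that needs care is the bookkeeping — checking that the single list of hypotheses imposed in this lemma simultaneously supplies everything that Lemmas \ref{RFGr70} and \ref{RFGr80} require (in particular that $t\ge r^\beta$ forces $(\eta t)^{1/\beta}\ge r$, and that the $N_z$-bound over the larger ball $B(x_0,c_2 t^{1/\beta})$ dominates the one over $B(x_0,r)$), and arranging $c_2$, $T_0$ and $\eta_0$ consistently across both applications.
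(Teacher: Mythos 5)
Your proposal is correct and follows essentially the same route as the paper: lower-bound $P_x^\omega(\exists s\in(t,\eta t])$ by $P_x^\omega(\exists s>t)-P_x^\omega(\exists s>\eta t)$, bound the first term below via Lemma \ref{RFGr70}/\ref{RFGr80} and the second above via Lemma \ref{RFGr70} applied at time $\eta t$, and then use $\alpha/\beta>1$ to choose $\eta_0$ so that the subtracted term is at most half the first. (Incidentally, your write-up also corrects a small typo in the paper's displayed chain, where the roles of the lower-bound and upper-bound constants appear swapped in the final inequality.)
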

  
\begin{proof} 
 By Lemma \ref{RFGr70} and Lemma \ref{RFGr80} there exist positive constants $c_1, c_2, c_3, T_0$ such that for almost all $\omega \in \Omega$ 
          \begin{align*}
                \frac{c_1 r^{\alpha - \beta} t}{t^{\alpha / \beta}} 
                       \le    P_x^{\omega} \left( d(x_0,Y_s^{\omega} ) \le 2r \text{ for some $s > t$ } \right)  
                       \le    \frac{c_{2} r^{\alpha - \beta} t}{t^{\alpha / \beta}}  
          \end{align*}
  for $r \ge 1$, $t \ge T_0$, $x,x_0 \in V(G^{\omega})$ with $d(x,x_0) \le r$, $t \ge r^{\beta}$,   
    $\displaystyle r \ge \max_{z \in B(x_0,c_3 t^{1/\beta})} N_z (\omega)$.     
  Take $\eta_0$ such that $\displaystyle c_2 - \frac{c_1}{\eta^{\alpha / \beta -1} } > \frac{c_2}{2} $ for all $\eta \ge \eta_0$.  
   Then we have 
       \begin{align*}
               & P_x^{\omega} \left( d(x_0, Y_s^{\omega} ) \le 2r \text{ for some $s \in (t, \eta t]$ } \right)   \\   
               & \ge  P_x^{\omega} \left( d(x_0, Y_s^{\omega} ) \le 2r \text{ for some $s > t$ } \right) 
                     -  P_x^{\omega} \left( d(x_0, Y_s^{\omega} ) \le 2r \text{ for some $s > \eta t$ } \right)    \\
               & \ge c_2 \frac{r^{\alpha - \beta} t}{t^{\alpha / \beta} } - c_1 \frac{r^{\alpha - \beta} (\eta t)}{ (\eta t)^{\alpha / \beta} } 
                        =  \frac{r^{\alpha - \beta} t}{ t^{\alpha / \beta}} \left( c_2 - \frac{c_1}{ \eta^{\alpha / \beta -1} } \right) .
       \end{align*}
 We  complete the proof by adjusting the constants. 
\end{proof}

\subsection{Consequences of Assumption \ref{Ass30}}
In this subsection, we give easy consequences of  Assumption \ref{Ass30}. 
We use $\varphi (q) = \varphi_C (q) = Cq^{1/\beta} (\log \log q)^{1-1/\beta}$ in this subsection. 
\begin{Lemma}   \label{Tail10}  
 \begin{enumerate}   \renewcommand{\labelenumi}{(\arabic{enumi})} 
      \item  Under Assumption \ref{Ass30} (1), for all $\gamma_1, \gamma_2 >0$, $q > 1$ and for almost all $\omega \in \Omega$ 
                   there exists a positive number $L^{(1)} (\omega ) = L_{x, \epsilon, \gamma_1, \gamma_2, q}^{(1)} (\omega)$ such that 
                                 \begin{align*} 
                                       \gamma_1 q^{n/\beta} \ge \max_{y \in B(x,\gamma_2 q^{n/\beta} )} N_{y} (\omega ), 
                                       \quad \gamma_1 \varphi (q^n) \ge \max_{y \in B(x,\gamma_2  \varphi (q^n)  )} N_{y} (\omega ), 
                                 \end{align*}
                   for all $n \ge L^{(1)} (\omega )$.

      \item Under Assumption \ref{Ass30} (2), for all $\gamma_1, \gamma_2 >0$, $q > 1$ and for almost all $\omega \in \Omega$ 
                   there exists a positive number $L^{(2)} (\omega ) = L_{x, \epsilon, \gamma_1, \gamma_2,q}^{(2)} (\omega)$ such that 
                                 \begin{align*}
                                       \gamma_1 q^{n/\beta} \ge \max_{y \in B(x,\gamma_2 q^n ) } N_{y} (\omega )
                                 \end{align*}
                   for all $n \ge L^{(2)} (\omega )$.

        \item  Set $\psi (t) := t^{1/\beta} h(t)$, where $h(t)$ is non-increasing and $\psi (t)$ is increasing function. 
              Under Assumption \ref{Ass30} (3), for all $\gamma_1, \gamma_2 >0$, $q > 1$ and for almost all $\omega \in \Omega$ 
                   there exists a positive number $L^{(3)} (\omega ) = L_{x, \epsilon, \gamma_1, \gamma_2,q}^{(3)} (\omega)$ such that 
                                 \begin{align*}
                                       \gamma_1 \psi (q^n)  \ge \max_{y \in B(x,\gamma_2  q^{n/\beta} ) } N_{y} (\omega )
                                 \end{align*}
                  for all $n \ge L^{(3)} (\omega )$.   

 \end{enumerate}
\end{Lemma}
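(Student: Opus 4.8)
The proof is a routine application of the first Borel--Cantelli lemma along the geometric subsequence $\{q^n\}_{n\ge1}$, combined with the polynomial volume bound \eqref{NumVer}.

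Fix $x\in V(G)$, $\gamma_1,\gamma_2>0$ and $q>1$. In part (1), for $n\ge1$ let $A_n$ be the event that some $y\in B(x,\gamma_2 q^{n/\beta})$ has $N_y>\gamma_1 q^{n/\beta}$, and $A_n'$ the analogous event with $q^{n/\beta}$ replaced by $\varphi(q^n)$; in parts (2), (3) define the single events $A_n''$, $A_n'''$ with radius $\gamma_2 q^n$, threshold $\gamma_1 q^{n/\beta}$, resp.\ radius $\gamma_2 q^{n/\beta}$, threshold $\gamma_1\psi(q^n)$. A union bound over the ball, using $\mathbb P(N_y\ge t)\le f(t)$ and \eqref{NumVer}, yields
\[
  \mathbb P(A_n)\le c\gamma_2^{\alpha}q^{n\alpha/\beta}f(\gamma_1 q^{n/\beta}),\qquad
  \mathbb P(A_n')\le c\gamma_2^{\alpha}\varphi(q^n)^{\alpha}f(\gamma_1\varphi(q^n)),
\]
\[
  \mathbb P(A_n'')\le c\gamma_2^{\alpha}q^{n\alpha}f(\gamma_1 q^{n/\beta}),\qquad
  \mathbb P(A_n''')\le c\gamma_2^{\alpha}q^{n\alpha/\beta}f(\gamma_1\psi(q^n)),
\]
where in the last one we used $\psi(q^n)=q^{n/\beta}h(q^n)$. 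If each of these series over $n$ converges, Borel--Cantelli gives, for a.e.\ $\omega$, a finite index past which the corresponding events do not occur; taking $L^{(1)}(\omega)$ to be the larger of the indices for $A_n$ and $A_n'$, and $L^{(2)}(\omega),L^{(3)}(\omega)$ those for $A_n'',A_n'''$, gives the three assertions.

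So the whole matter reduces to convergence of series of the shape $\sum_n t_n^{p}f(\gamma_1 t_n)$ with $\{t_n\}$ near-geometric and $p\in\{\alpha,\alpha\beta\}$. The tool is a condensation comparison: partition $\mathbb N$ (above some level) into the blocks $[t_n,t_{n+1})$, each containing $\asymp t_n$ integers; on such a block $f(\gamma_1 m)\ge f(\gamma_1 t_{n+1})$ since $f$ is non-increasing, and $m^{p-1}\ge t_n^{p-1}$ since $p-1\ge0$ (as $\alpha\ge1$, resp.\ $\alpha\beta\ge1$). Summing the block estimates and using $t_{n+1}\asymp t_n$ gives $\sum_n t_n^{p}f(\gamma_1 t_n)\le C\sum_m m^{p-1}f(\gamma_1 m)\le C\sum_m m^{p}f(m)<\infty$, the last bound being Assumption \ref{Ass30}(1) or (2) and the rescaling $m\mapsto\gamma_1 m$ costing only a bounded factor. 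For $A_n'$ one first notes $\varphi(q^{n+1})/\varphi(q^n)\to q^{1/\beta}$, so $\{\varphi(q^n)\}$ is near-geometric and the same argument applies. For $A_n'''$ one uses that $g(m):=mh(m^\beta)=\psi(m^\beta)$ is increasing (because $\psi$ is), hence $f\circ g$ is non-increasing, and runs the same condensation to reduce $\sum_n q^{n\alpha/\beta}f(\gamma_1\psi(q^n))$ to $\sum_m m^{\alpha}f(mh(m^\beta))<\infty$, i.e.\ Assumption \ref{Ass30}(3).

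The one step needing attention, and the place I expect to be the main obstacle, is precisely this last reduction, specifically the handling of the constant $\gamma_1$: for $\gamma_1\ge1$ it disappears by monotonicity of $f$, while for $\gamma_1<1$ one shifts the index, using that $\varphi$ (resp.\ $\psi=t^{1/\beta}h$) increases essentially geometrically so that $\gamma_1\varphi(q^n)\ge\varphi(q^{n-k_0})$ (resp.\ $\gamma_1\psi(q^n)\ge\psi(q^{n-k_0})$) for a suitable $k_0=k_0(\gamma_1,q)$ and all large $n$, after which the drop of one power in the condensation is absorbed by $\alpha\ge1$ (resp.\ $\alpha\beta\ge1$). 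Everything else — the union bound, \eqref{NumVer}, and the invocation of Borel--Cantelli — is entirely standard.
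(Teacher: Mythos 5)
Your argument is the same as the paper's: a union bound over the ball using the volume estimate \eqref{NumVer} together with the tail function $f$, then the first Borel--Cantelli lemma; the paper carries this out only for the first inequality of part (1) and declares the remaining cases similar. The condensation comparison you supply to reduce the geometric-in-$n$ series to the integer-indexed series of Assumption \ref{Ass30} is correct detail the paper leaves implicit; note only that the index shift you sketch for $\gamma_1<1$ in part (3) tacitly uses that $\psi(q^n)/\psi(q^{n-k_0})$ is bounded below by a constant exceeding $1/\gamma_1$, which need not hold for an arbitrary non-increasing $h$ with $\psi$ increasing (it does hold for the regularly varying $h$ used in the applications), and the paper's own terse proof does not address this point either.
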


\begin{proof}
We can prove (1) (2) (3) similarly, so we prove only the first inequality in (1). 
Since 
             \begin{align*}
                       &  \mathbb{P} \left( \gamma_1 q^{n/\beta} < \max_{y \in B(x, \gamma_2 q^{n/\beta})} N_y  \right)
                                  \le     \sum_{y \in B(x, \gamma_2 q^{n/\beta})}   \mathbb{P} \left( \gamma_1 q^{n/\beta} <  N_y \right)  \\
                       &  \le c (\gamma_2 q^{n/\beta})^{\alpha} f (  \gamma_1 q^{n/\beta} ), 
             \end{align*}
  where we use union bound in the first inequality and use \eqref{NumVer} in the second inequality. 
  The conclusion follows by the Borel-Cantelli lemma. 
\end{proof}

\section{ Proof of Theorem \ref{MainLIL} }  \label{Sec:LIL} 
   
In this section we give the proof of Theorem \ref{MainLIL}. 

\subsection{ Proof of  the LIL}
We follow the strategy as in \cite{Duminil-Copin}. 
  \begin{Theorem}  \label{Upp10}
      Let $\varphi (t) = \varphi_C (t) = C t^{1/\beta}  (\log \log t)^{1-1/\beta}$, where $C> 2^{1+1/\beta} c_{1.2}^{- (\beta -1)/\beta}$.
      Then under Assumption \ref{Ass10} (1) (2) (3) and Assumption \ref{Ass30} (1) the following hold for almost all $\omega \in \Omega$; 
              \begin{align}  
                      & \limsup_{t \to \infty}   \frac{ \sup_{0 \le s \le t} d(Y_0^{\omega} , Y_s^{\omega}) }{\varphi (t) }  \le 1, 
                                 \qquad \text{ $P_x^{\omega}$-a.s. for all $x \in V(G^{\omega})$},    \label{Upp12}   \\  
                      & P_x^{\omega} \left(  \sup_{0 \le s \le t}  d(x,Y_s^{\omega} ) \le \varphi (t)  \text{ for sufficiently large $t$}  \right) = 1, \qquad \text{for all $x \in V(G^{\omega})$}. 
                                   \label{Upp13}
              \end{align}

       In particular, we have  
             \begin{align*}  
                     & \limsup_{t \to \infty}   \frac{d(Y_0^{\omega} , Y_t^{\omega}) }{\varphi (t) }  \le 1, 
                                 \qquad \text{ $P_x^{\omega}$-a.s. for all $x \in V(G^{\omega})$},       \\  
                     & P_x^{\omega} \left( d(x,Y_t^{\omega} ) \le \varphi (t) \text{ for all sufficient large $t$}  \right) = 1, \qquad \text{ for all $x \in V(G^{\omega})$} .  
             \end{align*}     
  \end{Theorem}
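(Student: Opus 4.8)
The plan is to establish the almost-sure containment \eqref{Upp13} first and deduce everything else from it: if $\sup_{0\le s\le t}d(x,Y_s^\omega)\le\varphi(t)$ for all large $t$, then dividing by $\varphi(t)$ gives \eqref{Upp12}, and since $d(x,Y_t^\omega)\le\sup_{0\le s\le t}d(x,Y_s^\omega)$ the two ``in particular'' statements are immediate. So I would fix $x\in V(G^\omega)$ and discretise time along a geometric scale $t_n=q^n$ with $q>1$ to be chosen. Because the hypothesis on $C$ is the \emph{strict} inequality $C>2^{1+1/\beta}c_{1.2}^{-(\beta-1)/\beta}$, one has $c_{1.2}\bigl(C/2^{1+1/\beta}\bigr)^{\beta/(\beta-1)}>1$, and therefore $q>1$ can be taken close enough to $1$ and $\delta\in(0,c_{1.2}\wedge c_{1.4})$ small enough that
\[
 \theta_0:=(c_{1.2}-\delta)\Bigl(\frac{C}{2^{1+1/\beta}q^{1/\beta}}\Bigr)^{\beta/(\beta-1)}>1 .
\]

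The core estimate is obtained by feeding $t=q^{n+1}$ and $2r=\varphi(q^n)$ into \eqref{RFHK31-1} of Lemma \ref{RFHK30} (walk started at $x$, distance measured from $x$, so the side condition $d(x,y)\le 2r$ is vacuous). A direct computation, using $(1-1/\beta)\cdot\beta/(\beta-1)=1$, gives $r/(2t)^{1/\beta}=C(\log\log q^n)^{1-1/\beta}/(2^{1+1/\beta}q^{1/\beta})$ and hence
\[
 P_x^\omega\Bigl(\sup_{0\le s\le q^{n+1}}d(x,Y_s^\omega)\ge\varphi(q^n)\Bigr)\le c_1\,(n\log q)^{-\theta_0}+c_2\exp\bigl[-c_3q^{n+1}\bigr],
\]
which is summable in $n$ precisely because $\theta_0>1$. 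Before summing, however, I must verify that the hypotheses of Lemma \ref{RFHK30} hold for all large $n$, namely $q^{n+1}\ge\max_{u\in B(x,\varphi(q^n))}N_u(\omega)$ and $\varphi(q^n)/2\ge\max_{u\in B(x,\varphi(q^n))}N_u(\omega)$. The second bound is exactly the content of Lemma \ref{Tail10}(1) applied with $\gamma_1=\tfrac12$, $\gamma_2=1$ (this is the sole place Assumption \ref{Ass30}(1) enters), and the first follows from it together with the elementary fact $q^{n+1}/\varphi(q^n)\to\infty$. Borel--Cantelli then yields, $P_x^\omega$-a.s., $\sup_{0\le s\le q^{n+1}}d(x,Y_s^\omega)<\varphi(q^n)$ for all large $n$.

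Finally one interpolates between scales: for $t\in[q^n,q^{n+1}]$ with $n$ large (so that $\varphi$ is increasing on this range and the previous bound holds), monotonicity of $s\mapsto\sup_{0\le s'\le s}d(x,Y_{s'}^\omega)$ and of $\varphi$ give $\sup_{0\le s\le t}d(x,Y_s^\omega)\le\sup_{0\le s\le q^{n+1}}d(x,Y_s^\omega)<\varphi(q^n)\le\varphi(t)$, which is \eqref{Upp13} for this $x$. Since $V(G)$ is countable, intersecting the full-measure sets of $\omega$ over all $x\in V(G)$ yields \eqref{Upp13} for every $x\in V(G^\omega)$ simultaneously (equivalently, one proves it for a single base point and invokes the $0$--$1$ law of Theorem \ref{RF0-1-30}, the $\limsup$ being a base-point-independent tail functional). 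I expect the main obstacle to be not the Borel--Cantelli bound itself but the uniform control of the $N_u(\omega)$'s over the growing balls $B(x,\varphi(q^n))$, which is why the integrability condition Assumption \ref{Ass30}(1) is needed, together with the fact that the target constant is exactly $1$ rather than $1+\varepsilon$ --- this forces the delicate limiting choice of $q\downarrow1$, $\delta\downarrow0$ and works only because the hypothesis on $C$ is a strict inequality.
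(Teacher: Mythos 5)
Your argument is essentially the paper's own: discretise along a geometric scale, apply the maximal estimate \eqref{RFHK31-1} of Lemma \ref{RFHK30}, verify the side conditions on the $N_u(\omega)$'s via Lemma \ref{Tail10}(1) (which is exactly where Assumption \ref{Ass30}(1) enters), sum over $n$ by Borel--Cantelli using the strict inequality on $C$ to get a summable exponent, and then interpolate between scales using monotonicity of $\varphi$. The only cosmetic difference is that you substitute $2r=\varphi_C(q^n)$ directly so the Borel--Cantelli events involve $\varphi_C$ itself, whereas the paper plugs in $r=\varphi_C(t_n)$ and obtains a bound on $P(\sup\ge 2\varphi_C(t_n))$, deferring the factor-of-two to an ``adjusting the constants'' step at the end; your bookkeeping is slightly cleaner but mathematically identical, and your derivation of \eqref{Upp12} from \eqref{Upp13} is also fine.
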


\begin{proof} 
Take $\eta >0$ and $\delta \in (0, c_{1.2} \wedge c_{1.4})$ sufficiently small constants 
        which satisfy \\
 $\displaystyle C > 2^{1/\beta} (1+\eta)^{1/\beta} \left( \frac{1}{c_{1.2} - \delta} \right)^{(\beta -1)/\beta}$.  
Set $t_n = (1+\eta )^n$.  

First we estimate $\displaystyle P_x^{\omega} \left( \sup_{0 \le s \le t_{n+1} }d(x, Y_s^{\omega})\ge  2\varphi_C (t_n) \right)$. 
  For all $\delta \in (0,c_{1.2} \wedge c_{1.4})$, using Lemma \ref{RFHK30} we have 
             \begin{align}   \label{Upp16}
                     &  P_x^{\omega} \left( \sup_{0 \le s \le t_{n+1} } d(x,Y_s^{\omega} ) \ge  2\varphi (t_n) \right)  
                             \le  c_1 \exp \left[ -(c_{1.2} - \delta )  \left( \frac{\varphi (t_n) }{ (2 t_{n+1})^{1/\beta} } \right)^{\beta/(\beta -1)} \right]   
                               +   c_2 \exp \left[ -c_3 t_{n+1} \right]   \notag \\
                     & \le  c_1 \exp \left[ -(c_{1.2} - \delta )  \left( \frac{\varphi (t_n) }{ ( 2(1+\eta ) t_{n})^{1/\beta} } \right)^{\beta/(\beta -1)} \right]   
                               +   c_2 \exp \left[ -c_3 t_{n+1} \right]    
             \end{align} 
    for $\displaystyle \sup_{z \in B(x, 2\varphi (t_n) )} N_{z} (\omega )\le \varphi (t_n) \wedge t_{n+1}$. 
 Note that $\displaystyle \sup_{z \in B(x, 2\varphi (t_n) )} N_{z} (\omega )\le \varphi (t_n) \wedge t_{n+1}$ 
        for all $n$ larger than a certain constant $L = L (\omega )$ by Lemma \ref{Tail10} (1). 

 We prove \eqref{Upp12}. Let $\displaystyle C > 2^{1/\beta} (1+\eta)^{1/\beta} \left( \frac{1}{c_{1.2} - \delta} \right)^{(\beta -1)/\beta}$ be as above.
 Since the  last term of \eqref{Upp16} is summable by the definition of $\eta$ and $\delta$.  
 By the Borel-Cantelli lemma we have 
                \begin{align*}
                       \limsup_{n \to \infty}  \frac{ \sup_{0 \le s \le t_{n+1}} d(Y_0^{\omega}, Y_s^{\omega})}{ 2\varphi (t_n) }  \le 1,  
                                 \qquad  \text{ $P_x^{\omega}$-a.s. for all $x \in V(G^{\omega})$}.  
               \end{align*}
   For all $t$ with $t_n \le t < t_{n+1}$ we have 
               \begin{align*}
                      \frac{ \sup_{0 \le s \le t} d(Y_0^{\omega}, Y_s^{\omega})}{ 2\varphi (t) }  
                                 \le  \frac{ \sup_{0 \le s \le t_{n+1}} d(Y_0^{\omega}, Y_s^{\omega})}{ 2\varphi (t_n) } .
               \end{align*}
    Hence we obtain \eqref{Upp12} from the above inequality and adjusting the constants. 

 Next we prove \eqref{Upp13}. 
 Let $\displaystyle C > 2^{1/\beta} (1+\eta)^{1/\beta} \left( \frac{1}{c_{1.2} - \delta} \right)^{(\beta -1)/\beta}$ be as above.  
 Since 
               \begin{align*}
                     P_x^{\omega} \left( \sup_{0 \le s \le t_{n} } d(x,Y_s^{\omega} ) \ge 2\varphi (t_n) \right) 
                             \le P_x^{\omega} \left( \sup_{0 \le s \le t_{n+1} } d(x,Y_s^{\omega} ) \ge 2\varphi (t_n) \right) 
               \end{align*}
  for $t \in [t_n , t_{n+1}]$ and the last term of \eqref{Upp16} is summable by the definition of $\eta$ and $\delta$. By the Borel-Cantelli lemma we have 
             \begin{align}
                         P_x^{\omega} \left(  \sup_{0 \le s \le t} d(Y_0^{\omega} , Y_s^{\omega} ) \le 2 \varphi (t) \text{ for all sufficiently large $t$ } \right) = 1,
                                      \qquad \text{for all $x \in V(G^{\omega})$}.
             \end{align}
      We thus complete the \eqref{Upp13} by adjusting the constants.   
\end{proof}

\begin{Theorem}  \label{Upp20} 
Let $\varphi (t) = \varphi_C (t) = C t^{1/\beta} (\log \log t)^{1- 1/\beta}$, 
  where $\displaystyle 0 < C <\frac{1}{2^{1+1/\beta}} \left( \frac{c_{3.1}}{c_{3.2}} \right)^{1/\alpha}  \left( \frac{1}{c_{2.2}} \right)^{(\beta- 1)/\beta}$.
Then under Assumption \ref{Ass10} (1) (2) (3) and Assumption \ref{Ass30} (1) the following holds;   
          \begin{align*}
                    \limsup_{t \to \infty}  \frac{ d(Y_0^{\omega}, Y_t^{\omega}) }{ \varphi (t) } \ge 1, \qquad \text{$P_x^{\omega}$-a.s. for all $x \in V(G^{\omega})$}.
          \end{align*} 
In particular, we have 
          \begin{align*}
                   &P_x^{\omega} \left(  d(Y_0^{\omega}, Y_t^{\omega})  \ge  \varphi (t) \text{ for sufficiently large $t$} \right) = 1,
                            \qquad \text{for all $x \in V(G^{\omega})$}, \\
                   &  \limsup_{t \to \infty}  \frac{ \sup_{0 \le s \le t}  d(Y_0^{\omega}, Y_s^{\omega}) }{\varphi (t)} \ge 1, 
                               \qquad \text{$P_x^{\omega}$-a.s. for all $x \in V(G^{\omega})$}. 
          \end{align*}
\end{Theorem}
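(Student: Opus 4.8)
The plan is to complement Theorem \ref{Upp10} by a ``second Borel--Cantelli'' argument along a lacunary sequence of times: we show that the increments of $Y^\omega$ over geometrically spaced blocks are large sufficiently often, and we deal with the dependence between these increments through the conditional (L\'evy) form of the Borel--Cantelli lemma (alternatively, the zero--one law of Theorem \ref{RF0-1-30} reduces matters to showing the relevant event has positive probability). Fix a large integer $q$, to be specified below, and put $t_n=q^n$, $s_n=t_n-t_{n-1}=(1-q^{-1})t_n$, and $r_n=2\varphi_C(t_n)$. Since $1+\epsilon<\beta$ (Assumption \ref{Ass10}), $r_n^{1+\epsilon}\le c\,t_n^{(1+\epsilon)/\beta}(\log\log t_n)^{(1+\epsilon)(1-1/\beta)}=o(s_n)$, so for all large $n$ the scale condition $d(y,z)^{1+\epsilon}\le s_n$ in \eqref{LHK} holds for every pair at distance $O(r_n)$.

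The quantitative heart of the argument is a lower bound for $P_y^\omega\bigl(d(y,Y_{s_n}^\omega)\ge r_n\bigr)$ uniform over $y$ in a suitable ball. I would write
\[
P_y^\omega\bigl(d(y,Y_{s_n}^\omega)\ge r_n\bigr) \ge \sum_{r_n\le d(y,z)\le\lambda r_n}q_{s_n}^\omega(y,z)\,\theta^\omega(z),
\]
bound $q_{s_n}^\omega(y,z)$ below by \eqref{LHK} and the $\theta$-mass of the annulus below by \eqref{Vol}; the outer radius $\lambda$ must be large enough in terms of $c_{3.2}/c_{3.1}$ for \eqref{Vol} to yield a genuinely positive lower bound, and the estimate is valid once $N_y(\omega)\le s_n$. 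Substituting $r_n=2\varphi_C(t_n)$ and $s_n=(1-q^{-1})t_n$ turns this into a lower bound $p_n\asymp(\log\log t_n)^{\alpha(1-1/\beta)}(\log t_n)^{-\kappa}$ with an explicit exponent $\kappa$, and the role of the quantitative hypothesis $C<2^{-(1+1/\beta)}(c_{3.1}/c_{3.2})^{1/\alpha}c_{2.2}^{-(\beta-1)/\beta}$, together with the freedom to enlarge $q$, is exactly to force $\kappa<1$. Since $\log t_n\asymp n$, this gives $\sum_np_n=\infty$.

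Next I would localize and apply the conditional Borel--Cantelli lemma. By Theorem \ref{Upp10} there is a radius $R_n$ with $R_n=O(q^{-1/\beta}\varphi_C(t_n))$ such that, $P_x^\omega$-a.s., $Y_{t_{n-1}}^\omega\in B(x,R_n)$ for all large $n$; by Lemma \ref{Tail10}(1) (where Assumption \ref{Ass30}(1) enters), for almost every $\omega$ there is a finite $L(\omega)$ with $\max_{y\in B(x,R_n)}N_y(\omega)\le s_n$ for all $n\ge L(\omega)$. Writing $\mathcal F_t=\sigma(Y_u^\omega:u\le t)$ and $A_n=\{d(Y_{t_{n-1}}^\omega,Y_{t_n}^\omega)\ge r_n\}\in\mathcal F_{t_n}$, the Markov property at time $t_{n-1}$ gives
\[
P_x^\omega(A_n\mid\mathcal F_{t_{n-1}})=P_{Y_{t_{n-1}}^\omega}^\omega\bigl(d(Y_{t_{n-1}}^\omega,Y_{s_n}^\omega)\ge r_n\bigr) \ge p_n
\]
for all $n$ beyond a $P_x^\omega$-a.s.\ finite random index, so $\sum_nP_x^\omega(A_n\mid\mathcal F_{t_{n-1}})=\infty$ $P_x^\omega$-a.s., and L\'evy's extension of the Borel--Cantelli lemma yields $P_x^\omega(A_n\text{ i.o.})=1$. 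On the event that $A_n$ occurs for infinitely many $n$ and $Y_{t_{n-1}}^\omega\in B(x,R_n)$ for all large $n$ we then obtain, for infinitely many $n$,
\[
d(x,Y_{t_n}^\omega) \ge d(Y_{t_{n-1}}^\omega,Y_{t_n}^\omega)-d(x,Y_{t_{n-1}}^\omega) \ge 2\varphi_C(t_n)-R_n \ge \varphi_C(t_n),
\]
the last step holding for $q$ large because $R_n/\varphi_C(t_n)=O(q^{-1/\beta})$. Hence $\limsup_{t\to\infty}d(Y_0^\omega,Y_t^\omega)/\varphi_C(t)\ge1$, $P_x^\omega$-a.s.\ for every $x\in V(G^\omega)$. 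The two ``in particular'' statements are then immediate: the first reads off along the subsequence $(t_n)$, the second from $\sup_{0\le s\le t}d(Y_0^\omega,Y_s^\omega)\ge d(Y_0^\omega,Y_t^\omega)$.

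The main obstacle is the displacement estimate of the second paragraph, and specifically the constant bookkeeping inside it: the annulus must be wide enough for \eqref{Vol} to be of use, yet widening it enlarges the Gaussian exponent $\kappa$, and the stated bound on $C$ (with $q$ large) is precisely what threads this needle so that $\kappa<1$ and $\sum_np_n=\infty$. A secondary point requiring care is uniformity in the starting vertex $y$: the lower heat kernel bound needs $N_y(\omega)\le s_n$, so one must first use the already-proven upper bound of Theorem \ref{Upp10} to confine $Y_{t_{n-1}}^\omega$ to $B(x,R_n)$, and then Lemma \ref{Tail10}(1) to control $\max_yN_y(\omega)$ there.
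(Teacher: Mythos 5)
Your proposal is correct and follows essentially the same strategy as the paper's proof: a lower heat--kernel annulus estimate from \eqref{LHK} and \eqref{Vol}, confinement of $Y_{t_{n-1}}^\omega$ via Theorem~\ref{Upp10} and Lemma~\ref{Tail10}(1), and the conditional (second) Borel--Cantelli lemma along a lacunary time sequence. The only substantive variation is your use of a large base $q$ instead of the paper's $t_n=2^n$, which, as you note, absorbs the factor $2^{1/\beta}$ in the ratio $\varphi(t_n)/s_n^{1/\beta}$ and would in fact allow the stated bound on $C$ to be relaxed to $C<\tfrac12(c_{3.1}/c_{3.2})^{1/\alpha}c_{2.2}^{-(\beta-1)/\beta}$; since the theorem's hypothesis on $C$ is stronger, the argument applies as is.
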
  

\begin{proof} 
 Define $\Phi (q) = q^{1/\beta} (\log \log q)^{1-1/\beta}$ and let $C$ be as above. 
Take $\eta >0$ as a sufficiently small constant such that 
          \begin{align*}
                    C<  \frac{1}{2^{1/\beta}} \left\{ \frac{1}{2} \left( \frac{c_{3.1}}{c_{3.2}}  \right)^{1/ \alpha}  - \eta \right\}  \left( \frac{1 }{c_{2.2} } \right)^{(\beta -1)/ \beta}. 
          \end{align*}
 Set $\displaystyle \frac{1}{\lambda} =  \frac{1}{2} \left( \frac{c_{3.1}}{c_{3.2}} \right)^{1/\alpha} - \eta$. 
 Note that $\displaystyle c_{3.1} \lambda^{\alpha} -c_{3.2} 2^{\alpha}>0$ and $\displaystyle c_{2.2} ( 2^{1/\beta} C\lambda)^{\beta/(\beta -1)} < 1$.

We prove that 
         \begin{align}  \label{Upp22}
                   \sum_{n}  P_x^{\omega} \left( A_n^{\omega} \mid \mathcal{F}_{2^n}^{\omega} \right)  = \infty,
         \end{align}
 where $A_n^{\omega} = \left\{ d(Y_{2^n}^{\omega}, Y_{2^{n+1}}^{\omega} ) \ge 2 \varphi (2^{n+1}) \right\}$ and $\mathcal{F}_t^{\omega} = \sigma \left( Y_s^{\omega} \mid s \le t \right)$. 
 To prove \eqref{Upp22}, first note that by Theorem \ref{Upp10}  there exists a sufficiently large constant $C_1$ such that for almost all $\omega \in \Omega$ 
         \begin{align*}
                   d(x,Y_{2^n}^{\omega}) \le C_1 \Phi (2^n)   \qquad \text{for sufficiently large $n$ (say $n \ge \tilde{N}_1$), \qquad $P_x^{\omega}$-a.s.}
         \end{align*}            
 Set $B_n^{\omega} = A_n^{\omega} \cap \{ d(Y_0^{\omega} , Y_{2^n}^{\omega}) \le C_1 \Phi (2^n) \}$. 
 Then we have 
           \begin{align}  \label{Upp24}
                  &  P_x^{\omega} \left( A_n^{\omega} \mid \mathcal{F}_{2^n}^{\omega} \right)  \ge  P_x^{\omega} \left( B_n^{\omega} \mid \mathcal{F}_{2^n}^{\omega} \right)  
                      =  1_{ \{ d( Y_0^{\omega} , Y_{2^n}^{\omega} ) \le C_1 \Phi (2^{n}) \} } 
                                        P_{Y_{2^n}^{\omega} }^{\omega} \left( d (Y_0^{\omega} , Y_{2^{n+1} -2^n}^{\omega}) \ge 2\varphi (2^{n+1}) \right)   \notag \\
                  & \ge  \left( \inf_{u \in B^{\omega} (x,C_1 \Phi (2^{n}))} P_u^{\omega} \left( d(Y_0^{\omega}, Y_{2^n}^{\omega}) \ge 2 \varphi (2^{n+1}) \right) \right)  
                               \cdot 1_{ \{ d( Y_0^{\omega} , Y_{2^n}^{\omega} ) \le C_1 \Phi (2^{n}) \} } , \qquad \text{$P_x^{\omega}$-a.s.} 
          \end{align} 
We consider the first term of \eqref{Upp24}. Take $u \in B^{\omega} (x,C_1 \Phi (2^{n}))$. 
Since $1+\epsilon < \beta$, there exists a positive integer $\tilde{N}_2 =\tilde{N}_2 (\lambda)$ (which does not depend on $u, \omega$) such that $d(u,v)^{1+\epsilon} \le 2^n$ for all $n \ge \tilde{N}_2 $ and 
  $v \in B^{\omega} (u,\lambda \varphi (2^{n+1}))$. 
So for all $n\ge \tilde{N}_2$ with $2^n \wedge 2\varphi (2^{n+1}) \ge N_{u} (\omega )$, using \eqref{LHK} and \eqref{Vol} we have 
          \begin{align}
                  &P_u^{\omega} \left( d(Y_0^{\omega}, Y_{2^n}^{\omega}) \ge 2\varphi (2^{n+1}) \right)
                            \ge  P_u^{\omega} \left( 2\varphi (2^n)  \le d(Y_0^{\omega}, Y_{2^n}^{\omega}) \le \lambda \varphi (2^{n+1}) \right)   \notag \\
                  &  = \sum_{ \substack{v \in V(G^{\omega}) \\  \varphi (2^{n+1}) \le d(u,v) \le \lambda \varphi (2^{n+1})} } q_{2^n}^{\omega} (u,v) \theta^{\omega} (v)  \notag \\
                  & \ge \sum_{ \substack{v \in V(G^{\omega}) \\  2\varphi (2^{n+1}) \le d(u,v) \le \lambda \varphi (2^{n+1})} } 
                             \frac{c_{2.1}}{(2^n)^{\alpha/\beta}} \exp \left[ - c_{2.2} \left( \frac{d(u,v)}{(2^n)^{1/\beta}} \right) ^{\beta/(\beta -1)} \right] \theta^{\omega} (v) \notag \\ 
                  & \ge \frac{c_{2.1}}{(2^n)^{\alpha / \beta}} \exp \left[ -c_{2.2} \left( \frac{\lambda \varphi (2^{n+1})}{(2^n)^{1/\beta}} \right)^{\beta / (\beta -1)} \right]
                              \theta^{\omega} (\{  v \in V(G^{\omega}) \mid 2\varphi (2^{n+1}) \le d(u,v) \le \lambda \varphi (2^{n+1}) \} )   \notag \\
                 & \ge c_{2.1} (c_{3.1} \lambda^{\alpha} -c_{3.2}2^{\alpha}) C^{\alpha} \left( \frac{1}{(n+1) \log 2} \right)^{c_{2.2} (2^{1/\beta}\lambda C)^{\beta / \beta -1} }
                        \left( \log \log 2^{n+1} \right)^{(\beta-1)\alpha/\beta} .     \notag              
         \end{align} 
By the above estimate we have 
         \begin{align}  \label{Upp26} 
                & \inf_{u \in B^{\omega} (x,C_1 \Phi (2^{n}))} P_u^{\omega} \left( d(Y_0^{\omega}, Y_{2^n}^{\omega})  \ge 2\varphi (2^{n+1}) \right)  \notag  \\
                &  \ge c_{2.1} (c_{3.1} \lambda^{\alpha} -c_{3.2}2^{\alpha}) C^{\alpha} \left( \frac{1}{(n+1) \log 2} \right)^{c_{2.2} (2^{1/\beta}\lambda C)^{\beta / \beta -1} }
                        \left( \log \log 2^{n+1} \right)^{(\beta-1)\alpha/\beta}  
        \end{align}
 for $n \ge \tilde{N}_2$ with $\displaystyle \max_{u \in B(x,C_1 \Phi (2^{n}))} N_{u} (\omega ) \le 2^n \wedge 2\varphi (2^{n+1})$.  
 By Lemma \ref{Tail10} (1),  $\displaystyle \max_{u \in B(x,C_1 \Phi (2^{n}))} N_{u} (\omega ) \le 2^n \wedge 2\varphi (2^{n+1})$ 
      holds for sufficiently large $n$ (say $n \ge \tilde{N}_3 = \tilde{N}_3 (\omega)$).  
 Hence by \eqref{Upp24} and \eqref{Upp26} we have 
          \begin{align}  \label{Upp25}
                  P_x^{\omega} \left( A_n^{\omega} \mid \mathcal{F}_{2^n}^{\omega} \right)  
                    \ge    c_{2.1} (c_{3.1} \lambda^{\alpha} -c_{3.2}2^{\alpha}) C^{\alpha} \left( \frac{1}{(n+1) \log 2} \right)^{c_{2.2} (2^{1/\beta} \lambda C)^{\beta / \beta -1} }
                                 \left( \log \log 2^{n+1} \right)^{(\beta-1)\alpha/\beta}  
         \end{align}
 for $n \ge \tilde{N}_1 \vee \tilde{N}_2 \vee \tilde{N}_3$. We thus complete to show \eqref{Upp22}.

By  \eqref{Upp22} and the second Borel-Cantelli lemma, 
  $d(x,Y_{2^n}^{\omega} ) \ge \varphi (2^n)$ or $d(x,Y_{2^{n+1}}^{\omega}) \ge  \varphi (2^n)$ for infinitely many $n$.
Hence 
       \begin{align*}
               \limsup_{ t \to \infty}  \frac{d(Y_0^{\omega},Y_t^{\omega})}{\varphi (t)}  \ge  1.
       \end{align*}
 We thus complete the proof. 
\end{proof}

By Theorem \ref{Upp10}, \ref{Upp20} and \ref{RF0-1-30} we obtain \eqref{MainLIL2}.

\subsection{Another law of the iterated logarithm}  \label{Subsec:AnotherLIL} 

The proof of Theorem \ref{MainLIL} (2) is quite similar to that of \cite[Theorem 4.1]{KN} 
 by using Lemma \ref{RFHK30}, Corollary \ref{RFHK90}, Lemma \ref{RFHK70},  Theorem \ref{RF0-1-30} and Lemma \ref{Tail10} (2).    
So we omit the proof.

\section{Lower Rate Function}  \label{Sec:LRF}
In this section we give the proof of Theorem \ref{Thm:LRF}.
We follow the strategy as in \cite[Section 4.1]{SW}.

 \begin{Theorem}  \label{LRF10} 
      Suppose that Assumption \ref{Ass10} (1) (2) (3) (4). In addition suppose that there exists a positive constant $c$ such that 
                 $\theta^{\omega} (x) \ge c$ for all $x \in V(G^{\omega})$ in the case of CSRW.  
      Let $\alpha/\beta >1$, $h:[0,\infty) \rightarrow (0,\infty)$ be a function such that $h(t) \searrow 0$ as $t \to \infty$, 
          $\varphi (t) := t^{1/\beta} h(t)$ be increasing for all sufficiently large $t$ and satisfy Assumption \ref{Ass30} (3).  
      If the function $h(t)$ satisfies
                   \begin{align}  \label{LRF11}
                             \int_1^{\infty} \frac{1}{t} h(t)^{\alpha - \beta} dt < \infty
                   \end{align}
        then  for almost all $\omega \in \Omega$ and all $x \in V(G^{\omega})$ we have 
                   \begin{align*}
                          P_x^{\omega} \left( d(x,Y_t^{\omega}) \ge t^{1/\beta} h(t)  \text{ for all sufficiently large $t$}\right) = 1. 
                   \end{align*}
   \end{Theorem}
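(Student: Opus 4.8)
The plan is to prove the convergent-case statement via a Borel--Cantelli argument along a geometric subsequence of times, combined with an interpolation step that controls the walk between consecutive times of the subsequence. Fix $q>1$ and set $t_n = q^n$. The target event is that $d(x,Y_t^\omega) \ge \varphi(t)$ eventually, where $\varphi(t) = t^{1/\beta}h(t)$; the complementary ``bad'' event is that $d(x,Y_t^\omega) < \varphi(t)$ for infinitely many $t$. First I would discretize: if $t\in[t_n,t_{n+1}]$ and $d(x,Y_t^\omega)<\varphi(t)$, then since $\varphi$ is increasing we get $d(x,Y_t^\omega) < \varphi(t_{n+1})$, and moreover $Y^\omega$ visits the ball $B(x, 2\varphi(t_{n+1}))$ (say) at some time $s > t_{n-1}$ provided $q$ is chosen so the balls at scales $t_{n-1}$ and $t_{n+1}$ are comparable. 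So the bad event is contained, up to a shift of index, in $\bigcap_N \bigcup_{n\ge N} E_n$ where $E_n = \{\, d(x_0, Y_s^\omega) \le 2\varphi(t_{n+1}) \text{ for some } s > t_{n-1}\,\}$.

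The core estimate is then an upper bound on $P_x^\omega(E_n)$, for which I would invoke Lemma \ref{RFGr70}: with $r = \varphi(t_{n+1})$ (rounded up to an integer) and $t = t_{n-1}$, one needs $t^{1/\beta} \ge r$, i.e. $t_{n-1}^{1/\beta} \ge \varphi(t_{n+1}) = t_{n+1}^{1/\beta}h(t_{n+1})$, which holds for large $n$ because $h(t)\to 0$ (after fixing $q$), and one needs $r \ge \max_{z\in B(x_0,r)} N_z(\omega)$, which holds eventually for a.e.\ $\omega$ by Lemma \ref{Tail10} (3) applied with $\psi = \varphi$ (this is exactly why Assumption \ref{Ass30} (3) is hypothesized). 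Lemma \ref{RFGr70} then yields
\begin{align*}
   P_x^\omega(E_n) \le \frac{c_1 r^{\alpha-\beta} t_{n-1}}{t_{n-1}^{\alpha/\beta}} \le c_2 \, \frac{\varphi(t_{n+1})^{\alpha-\beta}}{t_{n-1}^{\alpha/\beta - 1}} = c_3 \, q^{n(\alpha-\beta)/\beta} h(q^{n+1})^{\alpha-\beta} \cdot q^{-n(\alpha-\beta)/\beta} = c_3\, h(q^{n+1})^{\alpha-\beta},
\end{align*}
up to absorbing powers of $q$ into the constant; here $\alpha/\beta - 1 = (\alpha-\beta)/\beta > 0$ is used, which is where the hypothesis $\alpha/\beta > 1$ enters. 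Finally, $\sum_n h(q^{n+1})^{\alpha-\beta} < \infty$ is equivalent, by a standard Cauchy condensation / integral comparison using that $h$ is non-increasing, to the hypothesis $\int_1^\infty \frac1t h(t)^{\alpha-\beta}\,dt < \infty$. Borel--Cantelli then gives $P_x^\omega(\limsup_n E_n) = 0$, hence the bad event has probability zero, which is the claim. One should also handle the starting point freely in $B(x_0,r)$: since $d(x,x_0)\le r$ is permitted in Lemma \ref{RFGr70}, taking $x_0 = x$ is fine, but to get the statement for all $x\in V(G^\omega)$ simultaneously for a.e.\ $\omega$ I would note $V(G^\omega)$ is countable and take a countable union of null sets, or invoke Theorem \ref{RF0-1-30}.

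The main obstacle I anticipate is the bookkeeping in the discretization/interpolation step: one must choose $q$ (depending only on $\alpha,\beta$, not on $\omega$) so that both (i) the ball radius $\varphi(t_{n+1})$ is eventually dominated by $t_{n-1}^{1/\beta}$ so that Lemma \ref{RFGr70}'s hypothesis $t^{1/\beta}\ge r$ holds, and (ii) the event ``$d(x,Y_t^\omega) < \varphi(t)$ for some $t\in[t_n,t_{n+1}]$'' genuinely forces the walk into a fixed ball around $x$ at a time after $t_{n-1}$ — this is automatic since $Y_t^\omega$ itself is in that ball at time $t\in(t_{n-1},\infty)$, so condition (ii) is really just ensuring the indices line up. Requirement (i) holds for every fixed $q>1$ once $n$ is large, since $h\to 0$. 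The one genuinely delicate point is making sure the ``$N_z(\omega)$-admissibility'' condition $r\ge\max_{z\in B(x_0,r)}N_z(\omega)$ is met along the whole subsequence simultaneously for a.e.\ $\omega$; this is precisely delivered by Lemma \ref{Tail10} (3), so no new work is needed beyond citing it with the correct parameters. Everything else is routine estimation.
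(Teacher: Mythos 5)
Your proposal is correct and follows essentially the same route as the paper: discretize along a geometric subsequence $t_n$, bound the bad event by $\{d(x,Y_s^\omega)\le 2c\,\varphi(t_n)\text{ for some } s>t_n\}$, apply Lemma \ref{RFGr70} (with the admissibility condition on $N_z(\omega)$ supplied eventually by Lemma \ref{Tail10} (3)), note that $\alpha/\beta>1$ makes the bound proportional to $h(t_n)^{\alpha-\beta}$, compare the resulting series to $\int h(t)^{\alpha-\beta}\,dt/t$, and conclude by Borel--Cantelli. The only cosmetic differences from the paper's proof are the choice of base ($q$ vs.\ $2$) and the index shift ($t_{n-1}$ vs.\ $t_n$ as the time in Lemma \ref{RFGr70}), neither of which changes anything substantive.
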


\begin{proof}
Set $\varphi (t) := t^{1/\beta} h(t)$, $t_n := 2^n$ and 
    $A_n^{\omega} := \{ d(x,Y_s^{\omega}) \le \varphi (s) \text{ for some $s \in (t_{n},t_{n+1}]$ } \}$.  
Note that there exists a constant $c_1$ such that $\varphi (s) \le 2c_1 \varphi (t_n)$ for all sufficiently large $n$ (say $n \ge N_1$) and for all $s \in (t_n, t_{n+1}]$.   
Then by Lemma \ref{RFGr70} we have 
                \begin{align*}
                          & P_x^{\omega} \left( A_n^{\omega} \right) 
                                    \le P_x^{\omega} \left( d(x,Y_s^{\omega} ) \le 2c_1 \varphi (t_{n}) \text{ for some $s>t_n$}  \right)     
                                    \le \frac{c_2\varphi (t_n)^{\alpha - \beta} t_n}{t_n^{\alpha / \beta }} 
                \end{align*}  
  for $n$ with 
                \begin{align}  \label{LRF12} 
                    \begin{split}
                               &n \ge N_1,    \quad  2^n \ge T_0, \text{where $T_0$ is as in Lemma \ref{RFGr70}},  \quad  t_n^{1/\beta} \ge c_1 \varphi (t_n),  \\ 
                               &c_1 \varphi (t_n) \ge  \max_{z \in B(x,c_1 \varphi (t_n))} N_{z} (\omega).  
                    \end{split}
                 \end{align}  
  Note that \eqref{LRF12} is satisfied for sufficiently large $n$ (say $n \ge N_2 = N_2 (\omega))$ 
     by Assumption \ref{Ass30} (3) and Lemma \ref{Tail10} (3). 
  Thus 
                 \begin{align*}
                      &\sum_{n \ge N_2 (\omega)} P_x^{\omega} (A_n^{\omega})  
                                  \le   \sum_{n \ge N_2 (\omega)} \frac{c_2\varphi (t_n)^{\alpha - \beta} t_n}{t_n^{\alpha / \beta }} 
                                  =       \sum_{n \ge N_2 (\omega)} \frac{c_2 h(t_n)^{\alpha - \beta} t_n}{ t_n }   \\
                      &   \le    \sum_{n \ge N_2 (\omega)} \frac{c_3 h(t_n)^{\alpha - \beta} (t_n - t_{n-1})}{ t_n }   
                             \le  c_4 \int_{t_{N_2 -1}}^{\infty} \frac{h(s)^{\alpha - \beta}}{ s} ds .
                 \end{align*} 
 Since the above is integrable by \eqref{LRF11}, by the Borel-Cantelli lemma we have 
                     \begin{align*}
                          P_x^{\omega} \left( d(x,Y_t^{\omega}) \ge t^{1/\beta} h(t) \text{ for all sufficiently large $t$} \right) = 1. 
                   \end{align*}
 We thus complete the proof.              
\end{proof}

\begin{Theorem} \label{LRF30} 
      Suppose that Assumption \ref{Ass10} (1) (2) (3) (4) hold. 
       In addition suppose that there exists a positive constant $c$ such that 
                 $\theta^{\omega} (x) \ge c$ for all $x \in V(G^{\omega})$ in the case of CSRW.     
      Let $\alpha/\beta >1$, $h:[0,\infty) \rightarrow (0,\infty)$ be a function such that $h(t) \searrow 0$ as $t \to \infty$, 
            $\varphi (t) := t^{1/\beta} h(t)$ be increasing for all sufficiently large $t$ and  satisfy Assumption \ref{Ass30} (3).
      If the function $h(t)$ satisfies
                   \begin{align}  \label{LRF32}
                             \int_1^{\infty} \frac{1}{t}  h(t)^{\alpha - \beta} dt  = \infty
                   \end{align}
        then  for almost all $\omega \in \Omega$ and all $x \in V(G^{\omega})$ 
                   \begin{align} \label{LRF31}
                          P_x^{\omega} \left( d(x,Y_t^{\omega}) \ge \varphi (t)  \text{ for all sufficiently large $t$} \right) = 0.
                   \end{align}
\end{Theorem}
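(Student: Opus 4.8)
The plan is to prove the complementary statement that $d(x,Y_t^\omega)<\varphi(t)$ for arbitrarily large $t$ with $P_x^\omega$-probability one, via a second-moment (Kochen--Stone) argument applied to a sequence of return events, followed by an invocation of the $0$--$1$ law. Fix the constant $\eta=\eta_0$ of Lemma~\ref{RFGr90} and a base $q>1$ so large that $q\ge 2\eta$ (and large enough to absorb the threshold conditions below); put $t_n=q^n$ and, for $n$ large,
\[
  A_n^\omega:=\bigl\{\,d(x,Y_s^\omega)\le\tfrac14\varphi(t_n)\ \text{for some }s\in(t_n,\eta t_n]\,\bigr\}.
\]
Since $\varphi$ is increasing and positive, on $A_n^\omega$ the witnessing $s\ge t_n$ satisfies $d(x,Y_s^\omega)\le\tfrac14\varphi(t_n)\le\tfrac14\varphi(s)<\varphi(s)$, so $\limsup_n A_n^\omega\subseteq\{d(x,Y_t^\omega)<\varphi(t)\ \text{for arbitrarily large }t\}$. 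The latter event is a tail event, so by Theorem~\ref{RF0-1-30} its $P_x^\omega$-probability is $0$ or $1$; hence it suffices to show $P_x^\omega(\limsup_n A_n^\omega)>0$, which then forces the event in \eqref{LRF31} to have probability $0$. As $V(G^\omega)$ is countable, the exceptional $\omega$-null set can be chosen uniformly in $x$.

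For the first moment, apply Lemma~\ref{RFGr90} with $x_0=x$ (so $d(x,x_0)=0\le r$), $t=t_n$ and ball radius $2r=\tfrac14\varphi(t_n)$. The hypotheses $t\ge r^\beta$ and $r\ge\max_{z\in B(x_0,c_2t^{1/\beta})}N_z(\omega)$ hold for all large $n$: the first because $h(t)\to0$, the second because of Assumption~\ref{Ass30}(3) via Lemma~\ref{Tail10}(3) (with $\psi=\varphi$). Since $r^{\alpha-\beta}t_n^{\,1-\alpha/\beta}\asymp\varphi(t_n)^{\alpha-\beta}t_n^{\,1-\alpha/\beta}=h(t_n)^{\alpha-\beta}$, this gives $P_x^\omega(A_n^\omega)\ge c'\,h(t_n)^{\alpha-\beta}$, and since $h$ is non-increasing, comparing the sum with an integral,
\[
  \sum_n P_x^\omega(A_n^\omega)\ \ge\ \frac{c'}{\log q}\int_{1}^{\infty}\frac{h(t)^{\alpha-\beta}}{t}\,dt\ =\ \infty
\]
by the divergence assumption \eqref{LRF32}.

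For the second moment, take $m<n$ and, on $A_m^\omega$, set $\sigma_m:=\inf\{s>t_m:\ d(x,Y_s^\omega)\le\tfrac14\varphi(t_m)\}\le\eta t_m\le\tfrac12 t_{m+1}\le\tfrac12 t_n$. Applying the strong Markov property at $\sigma_m$ and noting that the post-$\sigma_m$ walk starts from $Y_{\sigma_m}^\omega$ with $d(x,Y_{\sigma_m}^\omega)\le\tfrac14\varphi(t_m)$, that the $A_n^\omega$-window translated by $\sigma_m$ lies in $(t_n-\sigma_m,\infty)$ with $t_n-\sigma_m\ge\tfrac12 t_n$, and that $\{d(x,\cdot)\le\tfrac14\varphi(t_n)\}\subseteq B(x,2\varphi(t_n))$, we get
\[
  P_x^\omega(A_m^\omega\cap A_n^\omega)\ \le\ E_x^\omega\Bigl[\,1_{A_m^\omega}\,P_{Y_{\sigma_m}^\omega}^\omega\!\bigl(d(x,Y_{s'}^\omega)\le 2\varphi(t_n)\ \text{for some }s'>\tfrac12 t_n\bigr)\Bigr].
\]
Lemma~\ref{RFGr70}, applied with center $x$ and radius $r'=\varphi(t_n)$ — the geometric requirement ``starting point within $r'$ of the center'' holds since $\tfrac14\varphi(t_m)\le\varphi(t_n)=r'$, and the $N_z(\omega)$-requirement follows again from Lemma~\ref{Tail10}(3) — bounds the inner probability by $\le c''\,\varphi(t_n)^{\alpha-\beta}t_n^{\,1-\alpha/\beta}=c'''\,h(t_n)^{\alpha-\beta}\le C^\ast P_x^\omega(A_n^\omega)$ with $C^\ast$ independent of $m,n$. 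Hence $P_x^\omega(A_m^\omega\cap A_n^\omega)\le C^\ast P_x^\omega(A_m^\omega)P_x^\omega(A_n^\omega)$ for all $m<n$. Writing $p_k=P_x^\omega(A_k^\omega)$, this yields $\sum_{j,k\le N}P_x^\omega(A_j^\omega\cap A_k^\omega)\le 2C^\ast\bigl(\sum_{k\le N}p_k\bigr)^2+\sum_{k\le N}p_k$, so, since $\sum_k p_k=\infty$, the Kochen--Stone (quantitative second Borel--Cantelli) lemma gives $P_x^\omega(\limsup_n A_n^\omega)\ge 1/(2C^\ast)>0$, which finishes the proof.

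The step I expect to be the main obstacle is the correlation bound: one must set up the strong Markov property at $\sigma_m$ correctly — getting the translated time window and the event inclusion right — and then verify, uniformly in $m<n$, all geometric and $N_z(\omega)$-threshold hypotheses of Lemma~\ref{RFGr70}; in particular that the restart point $Y_{\sigma_m}^\omega$ stays within the radius used (this is what forces the harmless enlargement of the target ball to $B(x,2\varphi(t_n))$, using monotonicity of $\varphi$) and that $t_n-\sigma_m$ remains comparable to $t_n$ (this is what dictates the large choice of $q$). The other ingredients — the first-moment estimate from Lemma~\ref{RFGr90}, the sum/integral comparison, and the tail-event $0$--$1$ upgrade from Theorem~\ref{RF0-1-30} — are routine given the lemmas of Section~\ref{Sec:Green}.
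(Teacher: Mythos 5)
Your proposal is correct, and it reaches the same conclusion via the same key lemmas (Lemma~\ref{RFGr70} for the upper tail-hitting bound, Lemma~\ref{RFGr90} for the lower bound, Lemma~\ref{Tail10}~(3) for the $N_z$-thresholds, and Theorem~\ref{RF0-1-30}), but the Borel--Cantelli machinery is organized differently from the paper's. The paper applies Lemma~\ref{LRF20}, a Chung--Erd\H{o}s-type criterion whose condition (2) already builds in the $0$--$1$ law, and whose condition~(3) splits the indices $i>j$ into the ``close'' index $i=j+1$ (handled by the trivial bound $P_x^\omega(A_{j+1}^\omega\cap A_j^\omega)\le P_x^\omega(A_j^\omega)$) and the ``far'' ones $i\ge j+2$ (handled by a quasi-independence bound). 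You instead prove a uniform correlation bound $P_x^\omega(A_m^\omega\cap A_n^\omega)\le C^\ast P_x^\omega(A_m^\omega)P_x^\omega(A_n^\omega)$ for \emph{all} $m<n$, feed it into Kochen--Stone to obtain $P_x^\omega(\limsup_n A_n^\omega)>0$, and only then invoke the $0$--$1$ law once to upgrade to probability one. The reason your correlation bound is uniform --- whereas the paper has to treat $i=j+1$ separately --- is your choice of return windows: you put $A_n^\omega$ on $(q^n,\eta q^n]$ with $q\ge 2\eta$, so the windows are separated by gaps and $\sigma_m\le\eta t_m\le\tfrac12 t_{m+1}\le\tfrac12 t_n$ holds even when $n=m+1$, keeping $t_n-\sigma_m\ge\tfrac12 t_n$ comparable to $t_n$. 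The paper uses abutting intervals $(\eta^n,\eta^{n+1}]$, for which $\eta^{i}-\sigma_j$ can collapse when $i=j+1$. This is a small but genuine streamlining; the underlying estimates and the strong-Markov-plus-Green-function mechanism are the same in both arguments.

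One minor bookkeeping remark: in the final step you pass from $P_x^\omega(\limsup_n A_n^\omega)>0$ to $P_x^\omega\bigl(d(x,Y_t^\omega)<\varphi(t)\text{ for arbitrarily large }t\bigr)=1$, using that the latter is a tail event. This is fine and is the same observation the paper makes to verify condition~(2) of Lemma~\ref{LRF20}; note that it depends on the elementary fact that for a \emph{fixed} reference point $x$ the event is tail (changing $x$ to $x'$ shifts $d(x,Y_t^\omega)$ by at most the constant $d(x,x')$, which is irrelevant at large times), so Theorem~\ref{RF0-1-30} forces the probability to be one for every starting point once it is positive for one.
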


We cite the following form of  the Borel-Cantelli Lemma (see \cite[Lemma 4.15]{SW}, \cite[Lemma B]{Takeuchi}, \cite[Theorem 1]{CE}). 
\begin{Lemma}  \label{LRF20}
  Let $\{ A_k \}_{k \ge 1}$ be a family of event which satisfies the following conditions;
          \begin{enumerate}   \renewcommand{\labelenumi}{(\arabic{enumi})} 
                  \item $\displaystyle \sum_k P(A_k)  = \infty$,  
                  \item $\displaystyle   P(\limsup A_k)  = 0 \text{ or } 1$, 
                  \item There exist two constants $c_1,c_2$ such that for each $A_j$  there exist $A_{j_1}, \cdots, A_{j_s} \in \{ A_k \}_{k \ge 1}$ such that 
                          \begin{enumerate} \renewcommand{\labelenumi}{(\roman{enumi})}  
                                    \item  $\displaystyle \sum_{i=1}^s P (A_j \cap A_{j_i} ) \le c_1 P(A_j)$,
 
                                    \item  for any $k \in \{ j+1, j+2, \cdots \} \setminus \{ j_1, j_2, \cdots, j_s \} $ we have $\displaystyle P(A_j \cap A_k) \le c_2 P(A_j) P(A_k)$.
                          \end{enumerate}
          \end{enumerate}
 Then infinitely many events $\{ A_k \}_{k \ge 1}$ occur with probability $1$. 
\end{Lemma}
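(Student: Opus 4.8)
The plan is to run the second moment method (the Chung--Erd\H{o}s / Paley--Zygmund argument). Write $S_n := \sum_{k=1}^n 1_{A_k}$ and $m_n := E[S_n] = \sum_{k=1}^n P(A_k)$; by hypothesis (1) we have $m_n \to \infty$, and the event that infinitely many $A_k$ occur is $\{S_\infty = \infty\}$ with $S_\infty := \lim_n S_n$. By hypothesis (2) this event has probability $0$ or $1$, so it suffices to prove that it has positive probability.

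The key step is to use the quasi-independence hypothesis (3) to bound $E[S_n^2]$ by a fixed multiple of $m_n^2$. Expand $E[S_n^2] = m_n + 2\sum_{1 \le j < k \le n} P(A_j \cap A_k)$ and fix $j$. By (3) there are indices $j_1,\dots,j_s$ (the only $k > j$ for which the product bound below need not hold) with $\sum_{i:\, j_i > j} P(A_j \cap A_{j_i}) \le \sum_{i=1}^s P(A_j \cap A_{j_i}) \le c_1 P(A_j)$, while for every other $k > j$ one has $P(A_j \cap A_k) \le c_2 P(A_j) P(A_k)$. Summing over $j < k \le n$ gives $\sum_{k=j+1}^n P(A_j \cap A_k) \le c_1 P(A_j) + c_2 P(A_j) m_n$, and hence
\[
E[S_n^2] \;\le\; m_n + 2 c_1 m_n + 2 c_2 m_n^2 \;\le\; C\, m_n^2, \qquad C := 1 + 2 c_1 + 2 c_2,
\]
for every $n$ large enough that $m_n \ge 1$.

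Next I would apply the Paley--Zygmund inequality to the nonnegative variable $S_n$: for all such $n$,
\[
P\!\left( S_n > \tfrac12 m_n \right) \;\ge\; \frac{\left( \tfrac12 m_n \right)^2}{E[S_n^2]} \;\ge\; \frac{1}{4C} \;>\; 0 .
\]
Set $B_n := \{ S_n > \tfrac12 m_n \}$. Since $(S_n)$ is non-decreasing and $m_n \to \infty$, on $\limsup_n B_n$ one has $S_\infty = \infty$, so by the (reverse) Fatou lemma
\[
P( S_\infty = \infty ) \;\ge\; P\!\left( \limsup_n B_n \right) \;\ge\; \limsup_n P(B_n) \;\ge\; \frac{1}{4C} \;>\; 0 .
\]
Together with hypothesis (2) this forces $P(S_\infty = \infty) = 1$, i.e. infinitely many of the events $A_k$ occur almost surely.

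The one place that needs care — and essentially the only step with content beyond the standard second-moment skeleton — is the bookkeeping around hypothesis (3): one must check that the indices $j_1,\dots,j_s$ are exactly the $k > j$ to which the product bound (3)(ii) need not apply, and that dropping the nonnegative terms $P(A_j \cap A_{j_i})$ with $j_i \le j$ is harmless, so that the double sum over $j<k$ is genuinely controlled by $m_n + m_n^2$ with the prescribed constants $c_1, c_2$. The remaining ingredients — Paley--Zygmund, reverse Fatou, and the given $0$--$1$ dichotomy — are routine.
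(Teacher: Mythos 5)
Your proof is correct. Note, however, that the paper does not prove Lemma \ref{LRF20}: it cites it directly (from Shiozawa--Wang, Takeuchi, and Chung--Erd\H{o}s) as a known generalization of the Borel--Cantelli lemma, so there is no in-paper proof to compare against. Your second-moment argument is exactly the standard route behind those references: bound $E[S_n^2]$ by $Cm_n^2$ using the quasi-independence condition (3), apply Paley--Zygmund to get a uniform lower bound on $P(S_n > \tfrac12 m_n)$, pass to $\limsup$ via reverse Fatou, and finish with the zero--one hypothesis (2). The one place requiring care --- separating, for fixed $j$, the ``exceptional'' indices among $\{j_1,\dots,j_s\}$ that exceed $j$ (controlled by (3)(i)) from the remaining $k>j$ (controlled by (3)(ii)), and noting that dropping the nonnegative terms with $j_i\le j$ only helps --- is handled correctly, and the threshold $m_n\ge 1$ makes the constant $C=1+2c_1+2c_2$ work as claimed.
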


\begin{proof}[Proof of Theorem \ref{LRF30}.]  
First we prepare preliminary facts. 
 Since $h(t) \searrow 0$ as $ t \to \infty$, there exists a positive constant $T_1$ such that $h(t) < 1$ for all $t \ge T_1$. 
    So there exists a constant $\kappa \in (0,1)$ such that $\varphi (t) \le (\kappa t)^{1/\beta}$ for $t \ge T_1$.  
    Take $\eta >1 \vee \eta_0$ (where $\eta_0$ is as in Lemma \ref{RFGr90}) with $1 - \frac{1}{\eta} \ge \kappa$ 
       and $c_1 = c_1 (\eta) \in (0,1)$ such that $2c_1 (\eta^{n+1})^{1/\beta} \le (\eta^n)^{1/\beta}$ for all $n$. 
    Note that for all $s $ with $\eta^{n+1} \le s \le \eta^{n+2}$ we have 
               \begin{align}  \label{LRF32}
                        \varphi (\eta^{n+1}) = (\eta^{n+1})^{1/\beta} h(\eta^{n+1}) \ge 2c_1 (\eta^{n+2})^{1/\beta} h(s) \ge 2c_1 \varphi (s) ,  
               \end{align}
      and for all sufficiently large $i,j$ with $i \ge j+2$ and $\eta^j \ge T_1$ (say $j \ge N_1$) we have
               \begin{align}  \label{LRF33}
                        (c_1 \varphi (\eta^{i+1}) )^{\beta}  \stackrel{\eqref{LRF32}}{\le } \varphi (\eta^{i})^{\beta} \le \kappa \eta^i
                         \stackrel{1-\frac{1}{\eta} \ge \kappa } \le \eta^{i} - \eta^{i-1} \le \eta^{i} - \eta^{j+1}.
               \end{align}

 Now we prove \eqref{LRF31}.             
 Set $A_n^{\omega} := \{ d(Y_0^{\omega}, Y_s^{\omega} ) \le 2c_{1} \varphi (\eta^{n+1}) \text{ for some $s \in (\eta^n, \eta^{n+1} ]$ } \}$. 
 We use Lemma \ref{LRF20} to show that infinitely many $A_n^{\omega}$ occur with probability $1$. 
 
 Note that $\eta^{n} \ge (c_{1} \varphi (\eta^{n+1}) )^{\beta}$ for sufficiently large $n$ (say $n \ge N_{2} = N_{2} (\eta)$) by \eqref{LRF33}. 
 By Lemma \ref{RFGr90}  we have 
                   \begin{align*}
                         P_x^{\omega} \left( A_n^{\omega} \right) 
                                 \ge c_{2} \frac{ (c_1 \varphi (\eta^{n+1}))^{\alpha - \beta} \eta^n}{ \eta^{n\alpha / \beta}}
                   \end{align*}
    for $\eta \ge \eta_0$ (where $\eta_0$ is as in Lemma \ref{RFGr90}) and $n \ge N_2$ with 
                    \begin{align}  \label{LRF40} 
                           \eta^{n} \ge T_0, \text{ where $T_0$ is as in Lemma \ref{RFGr90}}, 
                           \qquad  c_1 \varphi (\eta^{n+1}) \ge  \max_{z \in B(x,c_2 \eta^{n/\beta})} N_{z} (\omega). 
                     \end{align}    
 Note that \eqref{LRF40} holds for sufficiently large $n$ (say $n \ge N_{3} (\omega)$)  by Assumption \ref{Ass30} (3) and Lemma \ref{Tail10} (3).    
    Hence 
                    \begin{align*}
                            & \sum_{n \ge N_{3}} P_x^{\omega} \left( A_n^{\omega} \right) 
                                    \ge \sum_{n \ge N_{3}}  \frac{c_2 (c_1 \varphi (\eta^{n+1}) )^{\alpha - \beta} \eta^n}{ \eta^{n\alpha / \beta}}  
                                    =  \sum_{n \ge N_{3}}  c_{2} c_{1}^{\alpha - \beta} \eta^{\alpha / \beta}  
                                         \frac{ h(\eta^{n+1})^{\alpha - \beta} }{ \eta \cdot \eta^{n+1} } \eta^{n+1}    \\
                            &  =  \sum_{n \ge N_{3}}   \frac{ c_{2} c_{1}^{\alpha - \beta} \eta^{\alpha / \beta} }{ \eta \cdot (\eta -1)}  
                                              \frac{ h (\eta^{n+1})^{\alpha - \beta} }{ \eta^{n+1} } (\eta^{n+2} - \eta^{n+1})  
                                    \ge   \frac{ c_{2} c_{1}^{\alpha - \beta} \eta^{\alpha / \beta} }{\eta (\eta -1)} 
                                                 \int_{\eta^{N_3+1}}^{\infty}  \frac{h(s)^{\alpha - \beta}}{s} ds  .
                    \end{align*}   
 Thus we have $\displaystyle \sum_{n } P_x^{\omega} \left( A_n^{\omega} \right) = \infty$ by \eqref{LRF32}. 
 
The condition $(2)$ in Lemma \ref{LRF20} is immediate from Theorem \ref{RF0-1-30}, 
  since $\limsup_k A_k^{\omega}$ is a tail event.   

Next we show the condition $(3)$ in Lemma \ref{LRF20}. 
Set $\sigma_n^{\omega} := \inf \{ t \in (\eta^n, \eta^{n+1}] \mid d(Y_0^{\omega}, Y_t^{\omega}) \le 2 c_1 \varphi (\eta^{n+1}) \}$. 
Then for $i \ge j+2$  we have 
         \begin{align}  \label{LRF34}
                   & P_x^{\omega} (A_i^{\omega} \cap A_j^{\omega}) 
                           = P_x^{\omega} (\sigma_j \le \eta^{j+1}, \sigma_i \le  \eta^{i+1})  \notag  \\
                   &  =E_x^{\omega} \left[ 1_{ \{ \sigma_j \le \eta^{j+1} \} }  
                              P_{Y_{\sigma_j}}^{\omega}  \left(  d(x,Y_t^{\omega}) \le 2c_1 \varphi (\eta^{i+1} ) 
                                    \text{ for some $t \in (\eta^{i} - \sigma_j, \eta^{i+1} - \sigma_j]$} \right)  \right]   \notag   \\
                   &  \le   E_x^{\omega} \left[ 1_{ \{ \sigma_j \le \eta^{j+1} \} }    
                              P_{Y_{\sigma_j}}^{\omega}  \left(   d(x,Y_t^{\omega}) \le 2c_1 \varphi (\eta^{i+1} ) 
                                    \text{ for some $t > \eta^{i} - \eta^{j+1}$} \right)  \right]    \notag  \\
                   &  \le  \left( \sup_{z:d(x,z) \le 2c_1 \varphi (\eta^{j+1} )}   P_z^{\omega} \left( d(x,Y_t^{\omega}) \le 2c_1 \varphi (\eta^{i+1}) 
                                     \text{ for some $t > \eta^{i} - \eta^{j+1}$} \right)  \right) \cdot 
                                     P_x^{\omega} \left( \sigma_j \le \eta^{j+1} \right) .  
        \end{align}         
  By Lemma \ref{RFGr70}, for any $i \ge j+2$ with 
               \begin{align}  \label{LRF35}
                       \eta^{i} - \eta^{j+1} \ge (c_1 \varphi (\eta^{i+1}))^{\beta},  
                             \quad 2 c_1 \varphi (\eta^{j+1}) \le c_1 \varphi (\eta^{i+1}), 
                             \quad  \varphi (\eta^{i+1}) \ge \max_{z \in B(x,  \varphi (\eta^{i+1}) )} N_z (\omega) 
               \end{align}
       we have 
               \begin{align}  \label{LRF36}
                    & \left( \sup_{z:d(x,z) \le 2c_1 \varphi (\eta^{j+1} )} 
                                P_z^{\omega} \left( d(x,Y_t^{\omega}) \le 2c_1 \varphi (\eta^{i+1}) 
                                     \text{ for some $t > \eta^{i} - \eta^{j+1}$} \right)  \right)    \notag  \\   
                    & \le  \frac{c_{3} \left( c_1 \varphi (\eta^{i+1} )  \right)^{\alpha - \beta}  (\eta^i - \eta^{j+1}) }{ (\eta^i - \eta^{j+1})^{\alpha / \beta} }  
                          \le   \frac{c_4 \left( c_1 \varphi (\eta^{i+1} )  \right)^{\alpha - \beta} \eta^i }{ (\eta^{i} )^{\alpha / \beta} }  .
               \end{align}
     \eqref{LRF35} holds for sufficiently large $i,j$ with $i \ge j+2$ (say $j \ge N_4=N_4 (\omega)$) by
     \eqref{LRF32}, \eqref{LRF33}, Assumption \ref{Ass30} (3) and Lemma \ref{Tail10} (3).  
   By Lemma \ref{RFGr90}, for any $i$ with  
                 \begin{align} \label{LRF37} 
                          \eta^{i} \ge T_0, \text{ where $T_0$ is as in Lemma \ref{RFGr90}}, \quad   \eta^{i} \ge (c_1 \varphi (\eta^{i+1}) )^{\beta},  
                                \quad c_1 \varphi (\eta^{i+1}) \ge  \max_{v \in B(x, c_5 \eta^{i/\beta})} N_{v} (\omega)
                 \end{align}
         we have 
                \begin{align}  \label{LRF38}
                      & \frac{ \left( c_1 \varphi (\eta^{i+1} )  \right)^{\alpha - \beta} \eta^i }{ (\eta^{i} )^{\alpha / \beta} }   
                           \le c_{6} P_x^{\omega} \left( d(x,Y_t^{\omega}) \le 2c_1 \varphi (\eta^{i+1}) 
                                     \text{ for some $t \in ( \eta^{i}, \eta^{i+1}]$} \right)   \notag \\
                      & = c_{6} P_x^{\omega} \left( A_i^{\omega} \right). 
                \end{align}   
     \eqref{LRF37} holds for sufficiently large $j$ (say $j \ge N_5=N_5 (\omega)$) by \eqref{LRF32}, Assumption \ref{Ass30} (3) and Lemma \ref{Tail10} (3).  
   Hence by \eqref{LRF34}, \eqref{LRF36} and \eqref{LRF38}
         we have $P_x^{\omega} \left( A_i^{\omega} \cap A_j^{\omega} \right) \le c P_x^{\omega} (A_i^{\omega})  P_x^{\omega} (A_j^{\omega})$ 
        for sufficiently large $j$ ($j \ge N_6 := N_4 \vee N_5)$ and $i \ge j+2$.  
    In the case of $i =j+1$ we have  $P_x^{\omega} \left( A_{j+1}^{\omega} \cap A_j^{\omega} \right) \le P_x^{\omega}  (A_{j}^{\omega})$. 
   Thus we obtain the condition $(3)$ of Lemma \ref{LRF20} for $\{ A_i^{\omega} \}_{i \ge N_6}$.    

By Lemma \ref{LRF20},  we thus complete the proof.
\end{proof}

By Theorem \ref{LRF10} and Theorem \ref{LRF30} we complete the proof of Theorem \ref{Thm:LRF}.

\section{Ergodic media} \label{Sec:Erg}

In this section, we consider the case $G=(V,E) = \mathbb{Z}^d$ and obtain 
Theorem \ref{Thm:Const} under Assumption \ref{Ass50}.  
We follow the strategy as in \cite{Duminil-Copin}

\subsection{Ergodicity of the shift operator on $\Omega^{\mathbb{Z}}$}
We consider Markov chains on the random environment, which is called the environment seen from the particle,  
according to Kipnis and Varadhan \cite{KV}.

Let $\Omega = [0,\infty )^{E }$ and define  $\mathscr{B}$ as the natural $\sigma$-algebra (generated by coordinate maps).  
We write  $\mathcal{Y} = \Omega^{\mathbb{Z}}$, $\mathscr{Y} = \mathscr{B}^{\otimes \mathbb{Z}}$. 
If each conductance may take the value $0$, we regard $0$ as the base point and define 
$\mathcal{C}_0 (\omega) = \{ x \in \mathbb{Z}^d \mid 0 \overset{\omega}{\longleftrightarrow} x \} = V(G^{\omega})$, 
 where  $0 \overset{\omega}{\longleftrightarrow} x $ means that there exists a path $\gamma = e_1 e_2 \cdots e_k$ from $0$ to $x$ such that $\omega (e_i) > 0$ for all $i=1, 2,\cdots, k$. 
Define $\Omega_0 = \{ \omega \in \Omega \mid \sharp \mathcal{C}_0 (\omega) = \infty \}$ 
 and $\mathbb{P}_0 = \mathbb{P} (\cdot \mid \Omega_0)$.

Next we consider the Markov chains seen  from the particle.
Recall that $\{ X_n^{\omega} \}_{n \ge 0}$ is the discrete time random walk which is introduced in Section \ref{Subsec:FW}. 
Let $\omega_n ( \cdot ) = \omega ( \cdot + X_n^{\omega} ) = \tau_{X_n^{\omega} } \omega (\cdot ) \in \Omega$.  
We can regard this Markov chain $\{ \omega_n \}_{n \ge 0} $ as being defined on $\mathcal{Y}  = \Omega^{\mathbb{Z}}$.  
We define a probability kernel $Q: \Omega_0 \times \mathscr{B} \to [0,1]$ as 
           \begin{align*}
                       Q(\omega , A) =    \frac{1}{ \sum_{e^{\prime} : |e^{\prime} | =1}   \omega_{e^{\prime}} }   
                                  \sum_{v: |v| = 1}  \omega_{0v}   1_{ \{ \tau_v \omega \in A \} }  .
           \end{align*}
This is nothing but the transition probability of the Markov chain $\{ \omega_n \}_{n \ge 0}$. 

Next we define the probability measure on $(\mathcal{Y}, \mathscr{Y})$ as 
           \begin{align*}
                       \mu \left(   (\omega_{-n} , \cdots, \omega_n) \in B   \right)   
                                   =  \int_B \mathbb{P}_0 (d\omega_{-n} ) Q(\omega_{-n} ,d\omega_{-n+1} )  \cdots Q(\omega_{n-1}, d\omega_n ) .
          \end{align*}
 By the above definition, $\{ \tau_{X_k^{\omega} } \omega \}_{k \ge 0}$ has the same law in 
$\mathbb{E}_0 ( P_0^{\omega} (\cdot ) )$ as $(\omega_0, \omega_1, \cdots)$ has in $\mu$, 
 that is, 
           \begin{align}  \label{SameDist}
                      \mathbb{E}_0 \left[ P_0^{\omega} (\{ \tau_{X_k^{\omega} } \omega \}_{k \ge 0} \in B ) \right]   =  \mu (  (\omega_0, \omega_1, \cdots) \in B )
           \end{align}
  for any $B \in \mathscr{Y}$.

We need the following Theorem. 
Let $T : \mathcal{Y} \to \mathcal{Y}$ be a shift operator of $\mathcal{Y}$, that is, 
         \begin{align*}
                          (T \omega)_n =  \omega_{n+1}.
         \end{align*}

\begin{Theorem}  \label{Thm:Erg}
Under Assumption \ref{Ass50}, $T$ is ergodic with respect to $\mu$.
\end{Theorem}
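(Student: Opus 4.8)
The plan is to deduce ergodicity of the shift $T$ on $(\mathcal{Y},\mathscr{Y},\mu)$ from the ergodicity of the environment process $\{\omega_n\}_{n\ge0}$ under $Q$ (equivalently, from ergodicity of the measure $\mu$ itself under $T$). The standard route, following Kipnis–Varadhan and the treatment in \cite{Duminil-Copin}, is: first, observe that $\mu$ is a $T$-invariant measure because $Q$ is a stationary transition kernel for $\mathbb{P}_0$ — one must check that $\mathbb{P}_0$ is reversible, or at least stationary, for $Q$. Here the reversing measure is $\pi^\omega(0)\,\mathbb{P}_0(d\omega)$ up to normalization; so strictly speaking I would work with the measure weighted by $\pi^\omega(0)$ (in the VSRW case this is where Assumption \ref{Ass50}(3), $\mathbb{E}[1/\pi^\omega(0)]\in(0,\infty)$, is needed to make the relevant measure a probability measure after normalization), and note that $d(x,Y^\omega)$-type statements are insensitive to this change of measure since it is mutually absolutely continuous with $\mathbb{P}_0$.

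Second, and this is the heart of the matter, I would show that any $T$-invariant event $A\in\mathscr{Y}$ has $\mu(A)\in\{0,1\}$. The mechanism is to transfer $T$-invariance back to $\mathbb{Z}^d$-shift-invariance on $\Omega$ and invoke Assumption \ref{Ass50}(1). Concretely: a $T$-invariant bounded function $F$ on $\mathcal{Y}$ corresponds, via the Markov structure, to a bounded harmonic function for $Q$, i.e. $QF = F$ $\mathbb{P}_0$-a.s.; by the ergodic decomposition / the fact that $\{\omega_n\}$ is a stationary Markov chain, such $F$ depends only on $\omega_0$ and defines $g(\omega):=F(\omega,\cdot)$ with $Qg = g$. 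One then argues that a $Q$-harmonic bounded function on the connected cluster is constant along the cluster, using the irreducibility of the random walk on $\mathcal{C}_0(\omega)$: $g(\tau_v\omega)=g(\omega)$ for every neighbor $v$ of $0$ in $G^\omega$, hence $g(\tau_x\omega)=g(\omega)$ for all $x\in\mathcal{C}_0(\omega)$. Because the cluster containing $0$ is the \emph{unique} infinite component (Assumption \ref{Ass50}(2)), the map $\omega\mapsto g(\omega)$, suitably extended, becomes a genuinely $\mathbb{Z}^d$-translation-invariant function on $(\Omega,\mathbb{P})$; ergodicity of $\mathbb{P}$ under $\{\tau_x\}$ forces it to be constant, hence $\mu(A)\in\{0,1\}$.

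**The main obstacle** I anticipate is the passage from "$T$-invariant event on $\mathcal{Y}$" to "$\mathbb{Z}^d$-invariant event on $\Omega$" cleanly: one must show that a $Q$-harmonic bounded function really is constant on each cluster (this uses connectivity of $G^\omega$ by construction of $V(G^\omega)$, but one has to be careful that $Q$ only "sees" neighbors with positive conductance, which is exactly the edge set of $G^\omega$, so the walk is irreducible there), and then that uniqueness of the infinite cluster lets one define the pushed-forward invariant event on $\Omega$ without ambiguity on the measure-zero set where $0\notin\mathcal{C}_0$. A secondary technical point is handling the CSRW-versus-VSRW distinction and the normalization of $\mu$: the invariant measure for the environment-seen-from-the-particle of the \emph{discrete-time} walk $X_n^\omega$ is $\propto \pi^\omega(0)\mathbb{P}_0(d\omega)$, and one should remark that Theorem~\ref{Thm:Erg} as stated (ergodicity of $T$ w.r.t.\ $\mu$) either already incorporates this weight or that the two measures are equivalent so that null sets — which is all that matters for the a.s.\ statements of Theorem~\ref{MainLIL} and Theorem~\ref{Thm:Const} — coincide. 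I would present the argument in the order: (i) $\mu$ is $T$-invariant; (ii) $T$-invariant sets $\leftrightarrow$ bounded $Q$-harmonic functions depending on $\omega_0$; (iii) such functions are cluster-constant by irreducibility; (iv) uniqueness of the infinite cluster upgrades this to $\mathbb{Z}^d$-invariance on $(\Omega,\mathbb{P})$; (v) conclude by ergodicity of $\mathbb{P}$.
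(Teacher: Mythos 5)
Your approach is the standard Kipnis--Varadhan/Kozlov argument, and it is exactly the route the paper relies on (the paper simply defers to \cite[Proposition~3.5]{BB}), so the overall strategy coincides. However, step (iii) of your sketch contains a genuine gap. You deduce that a bounded $Q$-harmonic function $g$ satisfies $g(\tau_v\omega)=g(\omega)$ on positive-conductance edges ``using the irreducibility of the random walk on $\mathcal{C}_0(\omega)$.'' Irreducibility alone does not give this: on a transient irreducible chain there exist bounded nonconstant harmonic functions, and the relation $Qg=g$ only says that $g(\omega)$ is a weighted average of $g(\tau_v\omega)$ over neighbors, not that each term equals $g(\omega)$. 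The mechanism that actually produces the edgewise identity is \emph{reversibility}. With $\mu_\pi(d\omega)\propto\pi^{\omega}(0)\,\mathbb{P}_0(d\omega)$, the kernel $Q$ is self-adjoint on $L^2(\mu_\pi)$; for bounded $g$ with $Qg=g$ one then computes
\begin{align*}
  0=\langle(I-Q)g,g\rangle_{L^2(\mu_\pi)}
   =\frac{1}{2Z}\sum_{|v|=1}\int\omega_{0v}\bigl(g(\tau_v\omega)-g(\omega)\bigr)^2\,\mathbb{P}_0(d\omega),
\end{align*}
which forces $g(\tau_v\omega)=g(\omega)$ for $\mathbb{P}_0$-a.e.\ $\omega$ and every $v$ with $\omega_{0v}>0$. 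Connectivity of $G^{\omega}$ is then used only to propagate this neighbor-by-neighbor identity to the whole cluster $\mathcal{C}_0(\omega)$. In other words, the $\pi^{\omega}(0)$-weighted measure you mention is not merely a bookkeeping device to make $\mu$ $T$-invariant; the reversibility it provides is precisely what makes the cluster-constancy step work, and your sketch omits this Dirichlet-form computation.

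A separate small correction: Assumption~\ref{Ass50}(3), $\mathbb{E}[1/\pi^{\omega}(0)]\in(0,\infty)$, is used in Section~\ref{Sec:Erg} to control the VSRW time change $T_n^{\omega}/n\to\mathbb{E}[1/\pi^{\omega}(0)]$, not to normalize the $\pi$-weighted stationary measure; the relevant normalizing constant for $\mu_\pi$ is $\mathbb{E}[\pi^{\omega}(0)]$, a different quantity.
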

The proof is similar to \cite[Proposition 3.5]{BB}, so we omit it.

\subsection{The Zero-One law} 
The purpose of this subsection is to give the Zero-One law (see Proposition \ref{Erg0-1}). 
    Let $a\ge 0$ and $A_1^{\omega} (a) , A_2^{\omega} (a), A_3^{\omega} (a) $ be the events 
               \begin{align*}
                         A_1^{\omega} (a)  &= \left\{ \limsup_{n\to \infty}  \frac{ d(X_0^{\omega}, X_n^{\omega} ) }{ n^{1/\beta}  (\log \log n)^{1-1/\beta} } > a \right\} ,   \\
                         A_2^{\omega} (a)  &= \left\{ \limsup_{n\to \infty}  \frac{ \sup_{0 \le k \le n} d(X_0^{\omega}, X_k^{\omega} ) }{ n^{1/\beta}  (\log \log n)^{1-1/\beta} } > a \right\} ,   \\   
                         A_3^{\omega} (a)  &= \left\{ \liminf_{n \to \infty}   \frac{ \max_{0 \le k \le n} d(X_0^{\omega}, X_k^{\omega} ) }{ n^{1/\beta}  (\log \log n)^{-1/\beta} } > a \right\}. 
               \end{align*}
Define  
                     \begin{align*}
                               \tilde{A}_i (a) = \left\{ \omega \in \Omega \mid \text{  $A_i^{\omega} (a)$ holds for  $P_x^{\omega}$-a.s. and for all $x \in \mathcal{C}_0 (\omega)$} \right\} .
                      \end{align*}

\begin{Proposition}  \label{Erg0-1}
 $\mathbb{P}_0( \tilde{A}_i (a))$ is either $0$ or $1$.
\end{Proposition}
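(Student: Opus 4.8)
The plan is to deduce the dichotomy from two ingredients that are already available: the tail $0$–$1$ law (Theorem~\ref{RF0-1-30}) and the ergodicity of the environment process $T$ on $(\mathcal Y,\mu)$ (Theorem~\ref{Thm:Erg}). First I would observe that for each $i\in\{1,2,3\}$ and each $a\ge 0$ the event $A_i^\omega(a)$ is a tail event for $Y^\omega$: since the jump times $T_n^\omega$ increase to $\infty$ and each normalising factor $n^{1/\beta}(\log\log n)^{\pm 1/\beta}$ tends to $\infty$, replacing $d(X_0^\omega,X_k^\omega)$ by $d(X_N^\omega,X_k^\omega)$ alters the numerators by a bounded amount and hence does not affect the $\limsup$ or the $\liminf$; thus $A_i^\omega(a)\in\bigcap_{t\ge 0}\sigma\{Y_s^\omega:s\ge t\}$. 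By Theorem~\ref{RF0-1-30}, for $\mathbb P$-a.e.\ $\omega$ the quantity $P_x^\omega(A_i^\omega(a))$ does not depend on $x\in\mathcal C_0(\omega)$ and lies in $\{0,1\}$. Writing $F(\omega):=P_0^\omega(A_i^\omega(a))$, this shows that, up to a $\mathbb P_0$-null set,
\[
  \tilde A_i(a)=\{\omega\in\Omega_0:\ F(\omega)=1\},
\]
so it suffices to prove $\mathbb P_0(F=1)\in\{0,1\}$.

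The next step is to show that $F$ is invariant under the lattice shifts along the cluster, i.e.\ $F(\tau_x\omega)=F(\omega)$ for $\mathbb P_0$-a.e.\ $\omega$ and every $x\in\mathcal C_0(\omega)$. Indeed, since the conductances of $\tau_x\omega$ at an edge $e$ are those of $\omega$ at $e+x$, the translation $y\mapsto y+x$ carries the walk $Y^{\tau_x\omega}$ started from $0$ onto the walk $Y^\omega$ started from $x$, and as the graph distance on $\mathbb Z^d$ is translation invariant one gets $P_0^{\tau_x\omega}(A_i^{\tau_x\omega}(a))=P_x^\omega(A_i^\omega(a))$; by Theorem~\ref{RF0-1-30} the right-hand side equals $P_0^\omega(A_i^\omega(a))$, whence $F(\tau_x\omega)=F(\omega)$. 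The exceptional set may be taken translation invariant because $\mathbb P$ is.

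Finally I would transfer this to $\mathcal Y$ and invoke ergodicity. Define $\tilde F:\mathcal Y\to\{0,1\}$ by $\tilde F\big((\omega_n)_n\big):=F(\omega_0)$. Under $\mu$ the coordinate $\omega_1$ is $\tau_V\omega_0$, where $V$ is a random neighbour of $0$ joined to it by an edge of positive conductance in $\omega_0$, so in particular $V\in\mathcal C_0(\omega_0)$; hence $\tilde F(T\underline\omega)=F(\tau_V\omega_0)=F(\omega_0)=\tilde F(\underline\omega)$ for $\mu$-a.e.\ $\underline\omega$. By Theorem~\ref{Thm:Erg} the $T$-invariant function $\tilde F$ is $\mu$-a.s.\ constant, so $\mu(\tilde F=1)\in\{0,1\}$. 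On the other hand the $\omega_0$-marginal of $\mu$ is $\mathbb P_0$ (take $B$ depending only on $\omega_0$ in \eqref{SameDist} and use $X_0^\omega=0$), so $\mu(\tilde F=1)=\mathbb P_0(F=1)=\mathbb P_0(\tilde A_i(a))$, and the proof is complete; the argument is uniform in $i$.

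The delicate part is the second step together with keeping the various null sets compatible: one must be sure that the ``for all $x\in\mathcal C_0(\omega)$'' clause in the definition of $\tilde A_i(a)$ genuinely collapses to a single basepoint on a translation-stable set of full measure, so that $F$ is a bona fide shift-covariant functional of the environment. Once that is settled, the ergodicity furnished by Theorem~\ref{Thm:Erg} makes the conclusion immediate.
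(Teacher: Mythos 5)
Your proof is correct and follows what is, as far as I can tell, the same approach as the paper's (the paper defers the proof to \cite[Proposition 5.2]{KN}, but the machinery set up in Section~\ref{Sec:Erg} — the environment seen from the particle, the kernel $Q$, the ergodicity Theorem~\ref{Thm:Erg}, and the identity \eqref{SameDist} — together with the tail $0$--$1$ law Theorem~\ref{RF0-1-30} is exactly what you invoke). The three key moves — showing $A_i^\omega(a)$ is a tail event so $F(\omega)=P_0^\omega(A_i^\omega(a))$ is $\{0,1\}$-valued and $\tilde A_i(a)$ collapses to $\{F=1\}$ modulo a null set; showing $F$ is shift-covariant along $\mathcal C_0(\omega)$ using translation covariance of the law of the walk plus Theorem~\ref{RF0-1-30}; and lifting $F$ to a $T$-invariant function on $(\mathcal Y,\mu)$ and applying Theorem~\ref{Thm:Erg} — are all correctly carried out, and you are appropriately careful about the null sets.

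One point worth tightening: the tail-measurability argument is stated in terms of passing from $X_0$ to $X_N$, but since Theorem~\ref{RF0-1-30} concerns $\bigcap_{t\ge0}\sigma\{Y_s^\omega : s\ge t\}$ with a \emph{deterministic} time $t$, you should say explicitly that for each fixed $t$ the jump chain of $(Y_s^\omega)_{s\ge t}$ agrees with $(X_n^\omega)_{n}$ after a finite random shift of the index (there are a.s.\ only finitely many jumps before $t$), and that both the change of basepoint and the change of index $n\mapsto n+M(t)$ leave the $\limsup$ and $\liminf$ unchanged because the normalizers diverge and $(n+M)^{1/\beta}(\log\log(n+M))^{\pm 1/\beta}/\bigl(n^{1/\beta}(\log\log n)^{\pm1/\beta}\bigr)\to1$. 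This is the step your ``replacing $X_0$ by $X_N$'' sentence is gesturing at, and once made precise the rest of the argument is airtight.
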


\begin{proof} 
See \cite[Proposition 5.2]{KN}. 
\end{proof}

\subsection{Proof of Theorem \ref{Thm:Const}}
In this subsection we discuss the proof of Theorem \ref{Thm:Const}. 
Recall $T_0^{\omega} =0$, $T_{n+1}^{\omega} = \inf \{ t>T_n^{\omega} \mid Y_t^{\omega} \neq Y_{T_n^{\omega}}^{\omega} \}$ and $X_n^{\omega} = Y_{T_n^{\omega}}^{\omega}$.

First we consider the CSRW.  
$\{ T_{n+1}^{\omega} - T_n^{\omega} \}_{n \ge 0}$ is a family of  i.i.d. random variables whose distributions are exponential with mean $1$, 
so the law of large number gives us 
             \begin{align*}
                    \frac{T_n^{\omega}}{n}  \to 1   \qquad   \text{$P_0^{\omega}$-a.s.} 
            \end{align*} 
Thus
    \begin{align*}
             \limsup_{t \to \infty}  \frac{d(Y_0^{\omega}, Y_t^{\omega})}{ t^{1/\beta} (\log \log t)^{1-1/\beta}} 
                        &=  \limsup_{n \to \infty}  \frac{d(X_0^{\omega}, X_n^{\omega})}{n^{1/\beta} (\log \log n)^{1-1/\beta}} , \\ 
             \limsup_{t \to \infty}  \frac{ \sup_{0\le s \le t} d(Y_0^{\omega}, Y_s^{\omega})}{ t^{1/\beta} (\log \log t)^{1-1/\beta}} 
                        &=  \limsup_{n \to \infty}  \frac{\sup_{0\le k \le n}d(X_0^{\omega}, X_k^{\omega})}{n^{1/\beta} (\log \log n)^{1-1/\beta}} , \\
             \liminf_{ t \to \infty}   \frac{ \sup_{0 \le s \le t} d(Y_0^{\omega}, Y_s^{\omega}) }{t^{1/\beta} (\log \log t)^{-1/\beta}} 
                        &=   \liminf_{ n \to \infty}   \frac{ \sup_{0 \le k \le n} d(X_0^{\omega}, X_k^{\omega}) }{n^{1/\beta} (\log \log n)^{-1/\beta}}.  
    \end{align*}
By Assumption \ref{Ass50}, Proposition \label{0-1-2} and Theorem \ref{MainLIL} we obtain Theorem \ref{Thm:Const}.

Next we consider the VSRW. $\{ T_{n+1}^{\omega} - T_n^{\omega} \}_{n \ge 0}$ are non-i.i.d., and the distribution of $T_{n+1}^{\omega} - T_n^{\omega}$ is exponential with 
  mean $\displaystyle \frac{1}{ \pi^{\omega} (X_n^{\omega}) }$. 
  Write $S_x^{\omega}$ be a exponential random variable with parameter $\pi^{\omega} (x)$ and  $ \bar{S}_x (\bar{\omega}) := S_x^{\bar{\omega}_0} $, 
   $(\bar{\omega} \in \mathcal{Y})$. 
    Then by \eqref{SameDist} and the ergodicity we have 
             \begin{align*}
                    &\frac{1}{n} T_n^{\omega} = \frac{1}{n} \sum_{k=0}^{n-1} S_{X_k^{\omega}}^{\omega} 
                                       \stackrel{d}{=} \frac{1}{n} \sum_{k=0}^{n-1} \bar{S}_{0} (T^k \bar{\omega})   \to  \mathbb{E}^{\mu} \left[ \bar{S}_0 \right] \\
                   &= \mathbb{E} \left[ E_0^{\omega} [ S_0^{\omega} ] \right]  
                                            =   \int_{\Omega} \int_0^{\infty}  x \pi^{\omega} (0) \exp (- \pi^{\omega} (0) x ) dx d\mathbb{P} 
                                            = \mathbb{E} \left[ \frac{1}{\pi^{\omega} (0)} \right] . 
             \end{align*}
Thus
     \begin{align*}
               \limsup_{t \to \infty}  \frac{d(Y_0^{\omega}, Y_t^{\omega})}{t^{1/\beta} (\log \log t)^{1-1/\beta} }  
                       &=  \left( \frac{1}{   \mathbb{E} \left[ \frac{1}{\pi^{\omega} (0)} \right]  } \right)^{1/\beta} 
                             \limsup_{n \to \infty}  \frac{d(X_0^{\omega}, X_n^{\omega})}{ n^{1/\beta} (\log \log n)^{1-1/\beta}},   \\ 
               \limsup_{t \to \infty}  \frac{ \sup_{0\le s \le t} d(Y_0^{\omega}, Y_s^{\omega})}{ t^{1/\beta} (\log \log t)^{1-1/\beta}} 
                        &=  \left( \frac{1}{   \mathbb{E} \left[ \frac{1}{\pi^{\omega} (0)} \right]  } \right)^{1/\beta}  
                              \limsup_{n \to \infty}  \frac{\sup_{0\le k \le n} d(X_0^{\omega}, X_k^{\omega})}{n^{1/\beta} (\log \log n)^{1-1/\beta}} , \\
               \liminf_{ t \to \infty}   \frac{ \sup_{0 \le s \le t} d(Y_0^{\omega}, Y_s^{\omega}) }{t^{1/\beta} (\log \log t)^{-1/\beta}} 
                        &=  \left( \frac{1}{   \mathbb{E} \left[ \frac{1}{\pi^{\omega} (0)} \right]  } \right)^{1/\beta} 
                              \liminf_{ t \to \infty}  \frac{ \sup_{0 \le k \le n} d(X_0^{\omega}, X_k^{\omega}) }{n^{1/\beta} (\log \log n)^{-1/\beta}}  .
     \end{align*}
By Assumption \ref{Ass50}, Proposition \ref{Erg0-1} and Theorem \ref{MainLIL} we obtain Theorem \ref{Thm:Const}.


\begin{Acknowledgment}  
 This paper was written under the supervision of the author's Ph.D. advisor, Prof. Takashi Kumagai. 
 The author thanks him for suggesting me this problem, fruitful discussion and helpful comments.  
 The author also thanks Prof. Yuichi Shiozawa for meaningful discussion about this paper. 
 This research is partially supported by JSPS KAKENHI 15J02838.
\end{Acknowledgment}

\begin{flushleft}
Chikara Nakamura \\
Faculty of Science, Kyoto University. \\
Kitashirakawa Oimachi-cho, Sakyo ward, Kyoto city  \\
Japan 606-8224  \\
Email: chikaran@kurims.kyoto-u.ac.jp
\end{flushleft}

\end{document}